\documentclass[11pt,oneside,reqno]{amsart} 

\usepackage[english]{babel}
\usepackage[T1]{fontenc}
\usepackage{lmodern}

\usepackage{fancyhdr}
\usepackage[usenames]{color}
\usepackage{bbm}
\usepackage[noadjust]{cite}
\usepackage{amsmath,amssymb,latexsym, amsfonts, amscd, amsthm}

\usepackage{mathrsfs}

\usepackage{enumitem}

\usepackage{mathtools}

\usepackage[color=blue!10]{todonotes}

\usepackage{graphicx}

\usepackage[mathscr]{euscript} 

\usepackage{tikz-cd}
\usetikzlibrary{arrows}
\usepackage{aliascnt}

\usepackage{relsize}
\usepackage[bbgreekl]{mathbbol}
\DeclareSymbolFontAlphabet{\mathbb}{AMSb} 
\DeclareSymbolFontAlphabet{\mathbbl}{bbold}
\newcommand{\Prism}{{\mathlarger{\mathbbl{\Delta}}}}

\usepackage{hyperref}
\hypersetup{pdftoolbar=true, pdftitle={OnExactness}, pdffitwindow=true, colorlinks=true, citecolor=blue, filecolor=black, linkcolor=red, urlcolor=red, hypertexnames=false}

\numberwithin{equation}{subsection}

\newtheoremstyle{thmlike}
{8pt}
{3pt}
{\slshape}
{}
{\bfseries}
{.}
{1em}
{}

\newtheoremstyle{deflike}
{8pt}
{3pt}
{}
{}
{\bfseries}
{.}
{1em}
{}

\theoremstyle{thmlike}
\newtheorem{theorem}{Theorem}[section]
\newtheorem{introtheorem}{Theorem}

\newtheorem{proposition}[theorem]{Proposition}
\newtheorem{lemma}[theorem]{Lemma}
\newtheorem{corollary}[theorem]{Corollary}

\theoremstyle{deflike}
\newtheorem{definition}[theorem]{Definition}

\newtheorem{remark}[theorem]{Remark}
\newtheorem*{remark*}{Remark} 
\newtheorem{example}[theorem]{Example}

\renewcommand{\ker}{\operatorname{Ker}}
\newcommand{\coker}{\operatorname{Coker}}

\renewcommand{\hom}{\operatorname{Hom}}

\newcommand{\CH}{\operatorname{CH}}

\newcommand{\Mod}{\operatorname{\mathsf{Mod}}}
\renewcommand{\mod}{\operatorname{\mathsf{mod}}}
\newcommand{\Rep}{\operatorname{\mathsf{Rep}}}

\newcommand{\gr}{\operatorname{gr}}

\newcommand{\ev}{\operatorname{ev}}

\newcommand{\spf}{\operatorname{Spf}}
\newcommand{\Hom}{\operatorname{Hom}}

\newcommand{\Ext}{\operatorname{Ext}}
\newcommand{\Tor}{\operatorname{Tor}}
\newcommand{\img}{\operatorname{im}}

\newcommand{\oh}{\operatorname{\mathcal{O}}}

\newcommand{\tor}{\operatorname{tor}}
\newcommand{\Fil}{\operatorname{Fil}}

\newcommand{\FI}{\operatorname{FI}}

\newcommand{\res}{\operatorname{res}}
\newcommand{\PD}{\operatorname{PD}}

\newcommand{\Gal}{\operatorname{Gal}}
\newcommand{\st}{\operatorname{st}}
\newcommand{\cris}{\operatorname{cris}}

\newcommand{\et}{\mathrm{\acute{e}t}}
\newcommand{\dr}{\mathrm{dR}}
\newcommand{\HT}{\mathrm{HT}}

\newcommand{\Es}{\operatorname{\mathfrak{S}}}
\newcommand{\Em}{\operatorname{\mathfrak{M}}}
\newcommand{\ainf}{\operatorname{A_{inf}}}

\renewcommand{\AA}{\mathbb{A}}
\newcommand{\ZZ}{\mathbb{Z}}
\newcommand{\NN}{\mathbb{N}}
\newcommand{\QQ}{\mathbb{Q}}

\newcommand{\CC}{\mathbb{C}}

\newcommand{\ModPrime}{{}^{\backprime}\hspace{-2pt}\Mod}

\newcommand{\lin}{\mathrm{lin}}
\newcommand{\fr}{\mathrm{fr}}
\newcommand{\BK}{\mathrm{BK}}

\title[Ex. Property of Breuil--Kisin Functors and Bloch--Kato Selmer Groups]{Exactness property of Breuil--Kisin functors and Bloch--Kato Selmer groups}

\begin{document}

\begin{abstract} 
\noindent Let $K$ be a $p$-adic field and $T$ a lattice in a semistable representation of $\Gal(\overline{K}/K)$ with Hodge-Tate weights in $[0, r]$. Assuming $0\leq r<p-1$, we prove that for a semistable extension of $\ZZ_p$  by  $T$, the corresponding sequence of strongly divisible modules is exact.  Analogous statements are proved for Breuil--Kisin modules and for prismatic $F$-crystals for all $r\geq 0$. 
In the crystalline case, we deduce that the integral Bloch-Kato Selmer group $H^1_f(K, T)$ is computed by $\Ext^1$ in the category of crystalline strongly divisible modules. Using further exactness results, we define a tensor product of strongly divisible modules, which commutes with the functors to Galois representations. As an application, we show that for abelian varieties $A_1, A_2$ over $K$ with good reduction, the  cup product map $\delta_1\cup\delta_2:A_1(K)\otimes A_2(K)\rightarrow H^2(K, T_p(A_1)\otimes T_p(A_2))$ induced by the Kummer sequences of $A_1, A_2$ factors through an $\Ext^2$ group of strongly divisible modules.
\end{abstract}

\author[P. \v{C}oupek]{Pavel \v{C}oupek}
\address{Pavel \v{C}oupek, Department of Mathematics, University of Virginia, Kerchof Hall, 141 Cabell Dr., Charlottesville, VA 22903, USA}
\email{kym4jc@virginia.edu}

\author[E. Gazaki]{Evangelia Gazaki}
\address{Evangelia Gazaki, Department of Mathematics, University of Virginia, Kerchof Hall, 141 Cabell Dr., Charlottesville, VA 22903, USA}
\email{eg4va@virginia.edu}

\author[A. Marmora]{Adriano Marmora}
\address{Adriano Marmora, Institut de Recherche Math\'{e}matique Avanc\'{e}e, 7 rue Ren\'{e} Descartes, 67084 Strasbourg, France}
\email{marmora@math.unistra.fr}

\maketitle

\section{Introduction}

In a celebrated paper \cite{BlochKato}, Bloch and Kato defined the local component of Bloch--Kato Selmer groups of a $p$-adic Galois representation of a number field at places above $p$. These groups should not only control the order of vanishing of the corresponding $L$-function but in their integral form dictate the exact special values of the $L$-function (see for example \cite[C \S11]{FontaineBourb}).

Let $K$ be a $p$-adic field, $\oh_K$ its ring of integers, and $G_K=\mathrm{Gal}(\overline{K}/K)$ its absolute Galois group. A lattice in a $p$-adic $G_K$-representation is a finite free $\ZZ_p$-module $T$, endowed with a continuous action of $G_K$. Such a lattice is called crystalline if the $p$-adic representation 
$V=T[1/p]$      
is crystalline in the sense of Fontaine \cite{Fontaine82}. For a crystalline lattice $T$, the  Bloch--Kato (local) Selmer group $H^1_f(K, T)$ is the subgroup of the continuous Galois cohomology $H^1(K,T)$ containing the classes of crystalline lattices
\begin{equation*}
\begin{tikzcd}
0\ar[r]& T \ar[r] & L \ar[r] & \ZZ_p \ar[r] & 0.
\end{tikzcd}
\end{equation*}

 Integral $p$-adic Hodge Theory aims to give a more explicit description of categories of representations coming from motives. After works of  Faltings, Kato, Fontaine, Breuil, Kisin, \emph{et al.}, Bhatt and Scholze \cite{BhattScholze2} gave a geometric incarnation of the category of crystalline lattices as the category of prismatic 
 $F$-crystals on $\spf \oh_K$.
 Precisely, they construct an equivalence of categories, called the  étale realization functor, from the category of prismatic $F$-crystals on $\spf \oh_K$ to the category of crystalline lattices of $G_K$-representations. 
It is tempting to try to identify $H^1_f(K, T)$ to a Yoneda $\Ext^1$ group in the category of prismatic 
 $F$-crystals, 
 however  even 
 though the étale realization functor is exact, its quasi-inverse is not.  
 From a geometric point of view, this is analogous to the fact that
 the category of holomorphic vector bundles on a punctured complex surface is equivalent to the category of bundles on the whole surface, but while the restriction functor is exact, its quasi-inverse (i.e., the direct image functor) is not.  
 In abstract terms this is not surprising either. The categories are not abelian,  
 but only exact (in the sense of Quillen \cite{Quillen}).  
 Since the datum of an exact category is an additional structure rather than a property of the category, an equivalence between such categories often fails to be exact in one or both directions. 
 What is more surprising is the following exactness result which we prove.

\begin{introtheorem}[Theorem~\ref{thm:ExactnessPrismatic}]\label{thm:MainThmPrismatic}
Let $T$ be a lattice in a crystalline $G_K$-representation with all Hodge--Tate weights non-negative. 
\begin{enumerate}\item Given  a short exact sequence of crystalline $G_K$-lattices of the form 
\begin{equation}\tag{$\tau$}
\begin{tikzcd}
0\ar[r]& T \ar[r] & L \ar[r] & \ZZ_p \ar[r] & 0,
\end{tikzcd}
\end{equation}
the corresponding sequence of prismatic $F$-crystals on $\spf \oh_K$
\begin{equation}\tag{$\delta$}
\begin{tikzcd}
0\ar[r]& D_{\Prism}(T) \ar[r] & D_{\Prism}(L) \ar[r] & \oh_{\Prism} \ar[r] & 0,
\end{tikzcd}
\end{equation}
is again short exact.
\item Consequently, we have a natural isomorphism $H^1_f(K, T)\simeq  \Ext^1(\oh_{\Prism}, D_{\Prism} (T))$, where $\Ext^1$ denotes 
the Yoneda $\Ext$ group in the category of prismatic $F$-crystals.
\end{enumerate}
\end{introtheorem}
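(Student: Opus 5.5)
We reduce the statement to Breuil--Kisin modules and then to a single linear-algebra check on the quotient. Prismatic $F$-crystals on $\spf \oh_K$ are determined by their evaluation on the Breuil--Kisin prism $(\Es, E(u))$ together with descent data. Exactness of a sequence of crystals can be tested after evaluating on $(\Es,E)$, and on the $\ainf$-prism, which is faithfully flat over it. So it suffices to prove that the sequence of Breuil--Kisin modules
\[0\to \Em(T)\to \Em(L)\to \Es\to 0\]
obtained from Kisin's functor is exact. Since $\Es$ is a two-dimensional regular local ring and these modules are finite free, exactness is a statement about ranks together with surjectivity at the closed point.

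\textbf{Left exactness.} This is formal. The functor $\Em$ is fully faithful, and after inverting $u$ and $p$-adically completing it becomes the exact equivalence with étale $\varphi$-modules over $\mathcal{O}_{\mathcal{E}}$. Hence $\Em(T)\to\Em(L)$ is injective. Its image is $\Em(L)\cap \Em(T)\otimes\mathcal{O}_{\mathcal{E}}$ up to saturation, and the cokernel $C$ embeds into $\Es$ after inverting $u$. So $C$ is $u$-torsionfree modulo $p$-torsion issues. Because $\mathcal{O}_{\mathcal{E}}$-ranks are additive, $C$ has rank one. The task is therefore to show that $C\to\Es$ is an isomorphism, i.e.\ surjective. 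The image of $\Em(L)$ in $\Es$ is a $\varphi$-stable ideal $I\subseteq\Es$ of finite colength with $I[1/u]^\wedge=\mathcal{O}_{\mathcal{E}}$.

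\textbf{Main step: surjectivity, using Hodge--Tate weights.} Here the hypothesis that the weights of $T$ are non-negative, while the quotient has weight $0$, must be used. I would first try the following. Pick $x\in\Em(L)$ mapping to a generator of $I$. Using $1\otimes\varphi$ on $\varphi^*\Em(L)$ and the fact that $\Em(T)$ has $E$-height in $[0,r]$, write $\varphi(x)=a x + t$ with $t\in\Em(T)$ and $a\in\Es$ (possibly after rescaling). On the quotient, $\varphi$ acts on $I=(f)$ by $\varphi(f)=a f$, and étaleness after inverting $u$ forces $a$ to be a unit times a power of $E$ appearing in $\varphi(f)/f$. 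The key is to apply the $\Es$-linear map $\varphi^*\Em(L)\to\Em(L)$. Its cokernel is killed by $E^r$ and has the Hodge filtration governed by the weights. The quotient having weight $0$ forces the $E$-part of $a$ to be trivial, so $\varphi(f)=(\text{unit})\cdot f$.

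Then the standard fact applies: a nonzero $f\in\Es$ with $\varphi(f)\in\Es^\times f$ is a unit. Indeed, write $f$ via Weierstrass preparation; $\varphi$ multiplies the Weierstrass degree by $p$, so that degree must be $0$; and $p$-adic valuation is preserved, so $p\nmid f$ unless $f$ is, up to a unit, a power of $p$. Powers of $p$ are excluded because $I[1/u]^\wedge=\mathcal{O}_{\mathcal{E}}$ forces $I\not\subseteq p\Es$. Hence $I=\Es$, which gives surjectivity.

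I expect the obstacle to be making the weight argument rigorous, i.e.\ ruling out that $a$ involves $E$ (or, in the crystalline setting, checking compatibility with the descent datum). If the direct argument fails, I would instead compare the sequence with $D_{\dr}$ or $D_{\cris}$ via $\Em/u\Em\otimes\QQ$. Strict exactness of Hodge filtrations for crystalline extensions with weights in $[0,r]$ gives exactness rationally, and the integral statement then follows from the colength argument above.

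\textbf{Part (2).} This follows from part (1). Any extension of $\oh_{\Prism}$ by $D_{\Prism}(T)$ realizes to a crystalline extension, because the étale realization is exact. Conversely, (1) shows that $D_{\Prism}$ sends classes in $H^1_f(K,T)$ to extensions. Full faithfulness makes the two maps mutually inverse and compatible with Baer sums, since both functors are additive and exact on such sequences.
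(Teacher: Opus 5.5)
There is a genuine gap in your main step. You write $I=(f)$ and pick $x$ mapping to ``a generator of $I$'', but nothing makes $I=\beta(\Em(L))$ principal. You yourself note that $\Es/I$ has finite length. By Krull's principal ideal theorem, a proper principal ideal of the two-dimensional regular local ring $\Es$ is never $(p,u)$-primary. So the principal case you treat with the Weierstrass-degree argument is vacuous, and the whole content of the theorem is to rule out non-principal ideals such as $(p,u)$. This really happens: in Example~\ref{ex:KeyCounterexample} the image of $\beta$ is exactly $(p,u)$. Writing $\varphi(x)=ax+t$ with $t\in\Em(T)$ rests on the same unjustified assumption. Your fallback through $D_{\dr}$ cannot help either. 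Since $\coker\beta$ is killed by a power of $(p,u)$, exactness after inverting $p$ is automatic and carries no integral information.

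Two ingredients are missing. First, the weight hypothesis has to become the right ideal-theoretic condition. The inclusion $\varphi(I)\subseteq I$ (your ``$\varphi$-stability'') is automatic and is satisfied by $(p,u)$. What you need is the reverse inclusion $\Es\varphi(I)\supseteq I$. The paper gets it from the height of $\Em=\underline{\Em}^{(r)}(L)$: one has $\Es\varphi(\Em)\supseteq E(u)^r\Em$, the quotient is $\Es\{-r\}=\Es t$ with $\varphi(t)=(\text{unit})\cdot E(u)^r t$, and applying $\beta$ and cancelling $E(u)^r t$ gives $\Es\varphi(I)\supseteq I$.

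Second, you need Lemma~\ref{lem:KeyLemma}: any ideal with $\Es\varphi(I)\supseteq I$ equals $(p^n)$ for some $n$. Its proof handles non-principal $I$ as follows. Take $f\in I\setminus(p^n)$ with a coefficient of minimal valuation $m<n$ at minimal index $j$; necessarily $j>0$. Specialize along $\Es\to\Es/(u^N-p)\simeq W[p^{1/N}]$ with $N>pj$. Then $I$ maps onto $(p^{m+j/N})$, while $\Es\varphi(I)$ maps onto the strictly smaller ideal $(p^{m+pj/N})$, a contradiction. Your argument excluding $I=(p^n)$ with $n>0$ then finishes the proof, as in the paper.

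Separately, your left-exactness paragraph does not establish exactness at $\Em(L)$: surjectivity of $C\to\Es$ gives an isomorphism only if $C$ is torsion-free. This is repairable. $\ker\beta$ is the kernel of a map from a free module to $\Es$, so it has depth $2$ and is free. It is a Kisin module with $T_{\Es}(\ker\beta)=T$, and full faithfulness of $T_{\Es}$ identifies it with $\Em(T)$. The paper instead uses a fibre-product description of $\underline{\Em}$. Your reduction to the Breuil--Kisin prism and part (2) are fine.
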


Our strategy to prove Theorem \ref{thm:MainThmPrismatic} is to work with Breuil--Kisin modules \cite{Kisin1}. Since the Breuil--Kisin prism covers the final object of the prismatic topos of $\spf(\oh_K)$, we are 
reduced to proving the following theorem, which is the technical core of the paper.

\begin{introtheorem}[Theorem~\ref{thm:ExactKisinModules}, Remark~\ref{rem:ExactKisinModlesComments}]\label{thm:MainThmKisin} 
Given a short exact sequence of lattices in semistable $G_K$-representations with non-negative Hodge-Tate weights 
of the form  \begin{equation}\tag{$\tau$}
\begin{tikzcd}
0\ar[r]& T \ar[r] & L \ar[r] & \ZZ_p \ar[r] & 0,
\end{tikzcd}
\end{equation}
the sequence of associated Breuil--Kisin modules
\begin{equation}\tag{$\mu$}
\begin{tikzcd}
0\ar[r]& \underline{\Em}(T) \ar[r] & \underline{\Em}(L) \ar[r] & \Es \ar[r] & 0
\end{tikzcd}
\end{equation}
corresponding to $(\tau)$  
under the covariant \'{e}tale realization functor is a short exact sequence of Breuil--Kisin modules. 
\end{introtheorem}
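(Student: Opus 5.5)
We will prove Theorem~\ref{thm:MainThmKisin} using the fact that the Breuil--Kisin module functor $\underline{\Em}$ is a fully faithful embedding of lattices in semistable representations into $\varphi$-modules over $\Es$. Two standard facts about this functor will be used: it is left exact, and it is compatible with $\Es[1/u]^\wedge_p$-extension. The second means that $\underline{\Em}(T)\otimes_{\Es}\mathcal{O}_{\mathcal{E}}$ recovers the étale $\varphi$-module $M_{\mathcal{E}}(T|_{G_{K_\infty}})$. Because the étale $\varphi$-module functor on $G_{K_\infty}$-representations is an exact equivalence, the sequence $(\mu)$ becomes short exact after tensoring with $\mathcal{O}_{\mathcal{E}}$. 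Since $\mathcal{O}_{\mathcal{E}}$ is flat over $\Es$ and all three modules are finite free over $\Es$, this tells us that $(\mu)$ is a complex, that $\underline{\Em}(T)\to\underline{\Em}(L)$ is injective, and that the sequence is exact in the middle. For the middle, intersect inside $\underline{\Em}(L)\otimes\mathcal{O}_{\mathcal{E}}$: the kernel of $\underline{\Em}(L)\to\Es$ is saturated, and it contains $\underline{\Em}(T)$ with the same $\mathcal{O}_{\mathcal{E}}$-span.

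We then need two things: that $\underline{\Em}(T)$ equals the kernel $N:=\ker(\underline{\Em}(L)\to\Es)$, and that the last map is surjective. Let $I\subseteq\Es$ be the image of $\underline{\Em}(L)\to\Es$. It is a $\varphi$-stable ideal, and $I[1/u]^\wedge=\mathcal{O}_{\mathcal{E}}$. So $I$ is a nonzero $\varphi$-stable ideal with $\Es/I$ of finite length, or at least killed by a power of $u$ after saturation considerations. The kernel $N$ is a saturated $\varphi$-submodule of $\underline{\Em}(L)$, hence finite free of height $\le r$ (using $\Es$ a 2-dimensional regular ring and reflexivity). It contains $\underline{\Em}(T)$ with the same étale realization. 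By Kisin's uniqueness of Breuil--Kisin lattices of finite $E$-height inside a given étale $\varphi$-module, and by the maximality characterization of $\underline{\Em}(T)$, we get $N=\underline{\Em}(T)$. In particular $\underline{\Em}(T)$ is saturated in $\underline{\Em}(L)$.

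The main obstacle is surjectivity, $I=\Es$. First, $I$ is $\varphi$-stable in the sense that $\varphi(I)\Es\subseteq I$. We will also exploit the cokernel condition. The cokernel of $1\otimes\varphi$ on $\underline{\Em}(L)$ is killed by $E(u)^r$, and on the rank-one quotient $\Es$ (with $\varphi$ the Frobenius) $1\otimes\varphi$ is an isomorphism. Chasing the diagram
\begin{equation*}
\begin{tikzcd}
\varphi^*\underline{\Em}(L)\ar[r]\ar[d] & \varphi^*I\ar[d]\\
\underline{\Em}(L)\ar[r] & I
\end{tikzcd}
\end{equation*}
shows that $\varphi^*I\to I$ is surjective, so $I=\varphi(I)\Es$. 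A nonzero ideal $I$ with $V(I)$ supported at the maximal ideal $(p,u)$ and $I=\varphi(I)\Es$ must be the unit ideal. Indeed, take the $(p,u)$-adic length or colength of $\Es/I$: the relation $I=\varphi(I)\Es$ multiplies colength by $p$ (since $\varphi$ is finite flat of degree $p$), so the colength must be $0$. We must also rule out $I$ having height-one components. Those would be killed after inverting $u$ unless they are of the form $(u^k)$, and $u$-divisibility is excluded by $\varphi(u)=u^p$ together with $I=\varphi(I)\Es$. We expect handling these components, and the precise bookkeeping of the colength, to be the delicate step. In the semistable case no condition on $r$ is needed, since everything happens at the level of $\varphi$-modules over $\Es$.
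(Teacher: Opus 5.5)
\textbf{The gap.} Your diagram chase does not show that $\varphi^*I\to I$ is surjective, i.e.\ that $I=\Es\varphi(I)$. For $\Em=\underline{\Em}(L)$, commutativity of the square only gives $\beta\bigl(\varphi_{\Em,\lin}(\varphi^*\Em)\bigr)=\Es\varphi(I)$. The cokernel of $\varphi_{\Em,\lin}$ is killed by $E(u)^r$, but it is not zero. So in your setup you only get $E(u)^rI\subseteq\Es\varphi(I)\subseteq I$. The fact that $\varphi_{\Es,\lin}$ is an isomorphism on $\Es$ tells you nothing about $I$.

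This cannot be repaired by better bookkeeping. Your setup (middle term effective of height $\le r$, quotient the trivial module $\Es$) is exactly that of Example~\ref{ex:KeyCounterexample}. There $r=1$, $I=(p,u)$, $\Es\varphi(I)=(p,u^p)\subsetneq I$, and the sequence is not right exact.

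The root cause is that your argument never uses the hypothesis that the Hodge--Tate weights are non-negative. That hypothesis is precisely what separates the theorem from the example, where $T=\ZZ_p(-1)$.
\begin{itemize}
\item With the paper's convention (cyclotomic character of weight $1$), $\underline{\Em}(L)$ is \emph{not} effective when $L$ has weights in $[0,r]$. Instead one has $\underline{\Em}(L)\subseteq\varphi_{\lin}(\varphi^*\underline{\Em}(L))\subseteq E(u)^{-r}\underline{\Em}(L)$. The same chase then yields $I\subseteq\Es\varphi(I)$. Your ``$\varphi$-stability'' $\Es\varphi(I)\subseteq I$ is not available a priori in this normalization.
\item Equivalently, as in the paper, pass to $\underline{\Em}^{(r)}$. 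The middle term is then effective of height $\le r$, but the quotient is $\Es\{-r\}=\Es t$ with $\varphi(t)=c_0^{-r}E(u)^rt$. The factor $E(u)^r$ on the quotient cancels the one in the height condition: $\Es\varphi(I)E(u)^r\supseteq E(u)^rI$, i.e.\ $I\subseteq\Es\varphi(I)$. Note this is only an inclusion, not the equality you claimed.
\end{itemize}

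\textbf{What survives.} With only this inclusion, your colength idea does go through. If $\Es/I$ has finite length, then $p\,\ell(\Es/I)=\ell(\varphi^*(\Es/I))=\ell(\Es/\Es\varphi(I))\le\ell(\Es/I)$, which forces $I=\Es$.

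Finite colength, however, needs a correct justification, and your treatment of height-one components is wrong. A component $(P(u))$ with $P$ distinguished, such as $(E(u))$, survives inverting $u$ and dies only in $\mathcal{O}_{\mathcal{E}}$. So $I\mathcal{O}_{\mathcal{E}}=\mathcal{O}_{\mathcal{E}}$ does not exclude $I\subseteq(P)$, and $\varphi(u)=u^p$ only handles $P=u$. Either of the following fixes it:
\begin{itemize}
\item $\coker\beta$ is a Kisin module whose \'etale realization vanishes, because $L\to\ZZ_p$ is onto and the realization is exact. Hence it is killed by a power of $(p,u)$ by Proposition~\ref{prop:KerOfTisUTorsion} and Lemma~\ref{lem:UtorIsPtor}; this is the ``moreover'' part of Proposition~\ref{prop:LeftExactKisinModules}.
\item Alternatively, let $g$ be a gcd of $I$ in the UFD $\Es$. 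The inclusion $I\subseteq\Es\varphi(I)$ forces $\varphi(g)\mid g$. Since $\varphi$ multiplies Weierstrass degrees by $p$, $g$ must be a power of $p$ up to a unit.
\end{itemize}

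Repaired this way, your length count is a legitimate alternative to the paper's route. The paper instead classifies all ideals with $\Es\varphi(I)\supseteq I$ as $(p^n)$ (Lemma~\ref{lem:KeyLemma}, via specializing $u\mapsto p^{1/N}$), and then excludes $n>0$ by applying $T_{\Es}^{(r)}$. Your handling of injectivity and exactness in the middle is fine.
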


We emphasize that the assumption that $T$ has non-negative Hodge--Tate weights is crucial for our results (a counterexample for $T=\mathbb{Z}_p(-1)$ is given as Example~\ref{ex:KeyCounterexample} in the main text). Let us point out that the convention  
we use is that the cyclotomic character is of Hodge--Tate weight $1$.

\medskip
Coming back to the original motivation of computing (integral, local) Bloch--Kato Selmer groups, 
we need to formulate variants of the above theorems in a more computable setting. By using Theorem \ref{thm:MainThmKisin}
  we  prove the following theorem, formulated
 in terms of strongly divisible modules of Breuil \cite{Breuil99}. By a theorem of Liu \cite{Liu08}, fixing $r<p-1$, the category $\Mod(S)_{r, \log}^{\varphi, N}$ of all weight $r$ strongly divisible modules is equivalent to the category of lattices in semistable representations with Hodge--Tate weights in $[0, r]$, with the subcategory $\Mod(S)_{r}^{\varphi, N}$ corresponding to crystalline lattices 
 (see \S\ref{subsec:BreuilModules} in the main text for a recollection of the context). 
 For a lattice $T$ in a semistable representation of $G_K$, let us  denote by  $H^1_{\st}(K, T)$ the semistable version of Bloch--Kato cohomology introduced in \cite{FontainePerrinRiou, Nekovar}. We then have the following.

\begin{introtheorem}[Corollary~\ref{cor:ExactBreuilModules}, Theorem~\ref{Th:isomorphism_ext}, Theorem~\ref{Th:isomorphism_ext_st}]\label{thm:MainThmBreuil}
Fix $r \in \mathbb{N}$ with $r<p-1$. Given a short exact sequence of lattices in semistable $G_K$-representations with Hodge-Tate weights in $[0, r]$ of the form  \begin{equation}\tag{$\tau$}
\begin{tikzcd}
0\ar[r]& T \ar[r] & L \ar[r] & \ZZ_p \ar[r] & 0,
\end{tikzcd}
\end{equation}
the corresponding sequence 
\begin{equation}\tag{$\mu$}
\begin{tikzcd}
0\ar[r]& \mathcal{M}_{\st}(T) \ar[r] & \mathcal{M}_{\st}(L) \ar[r] & \mathbf{1} \ar[r] & 0
\end{tikzcd}
\end{equation} 
of strongly divisible modules is short exact.
Consequently:
\begin{enumerate}
\item{For every lattice $T$ as above, there is a natural isomorphism $$H^1_{\st}(K, T) \simeq \Ext^1_{\Mod(S)_{r, \log}^{\varphi, N}}(\mathbf{1}, \mathcal{M}_{\st}(L)), $$
where 
the group on the right is the Yoneda $\Ext$ in the category of strongly divisible modules.
} \label{thm:MainThmBreuilpart1}

\item{If, additionally, $T[1/p]$ is crystalline, then there is a natural isomorphism $$H^1_f(K, T) \simeq \Ext^1_{\Mod(S)_{r}^{\varphi, N}}(\mathbf{1}, \mathcal{M}_{\st}(L)), $$ where 
the group on the right is the Yoneda $\Ext$ in the category of crystalline strongly divisible modules.
} \label{thm:MainThmBreuilpart2}
\end{enumerate}
\end{introtheorem}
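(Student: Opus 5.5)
The plan is to deduce exactness of $(\mu)$ from Theorem~\ref{thm:MainThmKisin} together with the known relation between Breuil--Kisin modules and strongly divisible modules for $r<p-1$. Concretely, by work of Liu (and Kisin's construction), for $r<p-1$ the strongly divisible module attached to a semistable lattice $T$ is obtained by base change along $\varphi\colon \Es\to S$:
\[
\mathcal{M}_{\st}(T)\simeq S\otimes_{\varphi,\Es}\underline{\Em}(T),
\]
with the filtration, Frobenius and monodromy determined by this base change and the Galois action. This identification is functorial and compatible with the covariant/contravariant conventions, possibly after dualizing or twisting. I would first record it and check that $\mathbf{1}$ corresponds to $\Es$ under it. Theorem~\ref{thm:MainThmKisin} gives that $0\to\underline{\Em}(T)\to\underline{\Em}(L)\to\Es\to0$ is exact as $\Es$-modules, and it is split as modules since $\Es$ is free. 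Base change along $\varphi$ is therefore exact, so the underlying $S$-modules of $(\mu)$ form a short exact sequence, compatible with $\varphi$ and $N$.

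The main obstacle is that exactness in $\Mod(S)_{r,\log}^{\varphi,N}$ also demands strictness for the filtration $\Fil^r$. That is, we need $0\to\Fil^r\mathcal{M}_{\st}(T)\to\Fil^r\mathcal{M}_{\st}(L)\to\Fil^rS\to0$ to be exact. Left exactness and the intersection property $\Fil^r\mathcal{M}_{\st}(T)=\mathcal{M}_{\st}(T)\cap\Fil^r\mathcal{M}_{\st}(L)$ follow from the description $\Fil^r\mathcal{M}=\{x: (1\otimes\varphi)(x)\in\Fil^rS\otimes_{\Es}\Em\}$, since this is defined by a condition that restricts well to submodules. For surjectivity onto $\Fil^rS$, I would use the Breuil--Kisin Frobenius. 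Let $e\in\underline{\Em}(L)$ lift $1\in\Es$. The height-$r$ condition gives $E(u)^r\Es\cdot 1\subset\img(1\otimes\varphi)$ on $\Es$, and exactness of the Breuil--Kisin sequence lifts this to $\underline{\Em}(L)$. Hence $E(u)^r$ times a suitable lift lies in $\Fil^r$. Since $\Fil^rS$ is generated by $E(u)^r$ and the divided powers $E(u)^{n}/n!$ for $n\geq r$, this gives surjectivity. Alternatively, I would count via a rank/snake argument after inverting $p$ and modding out by $\Fil^1S$, using that $\Fil^rS$ of the trivial object is as small as possible (weight $0$).

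For the consequences, exactness of the quasi-inverse on extensions of $\mathbf{1}$ by $\mathcal{M}_{\st}(T)$ gives a map from $H^1_{\st}(K,T)$, viewed as classes of semistable extensions $(\tau)$, to $\Ext^1(\mathbf{1},\mathcal{M}_{\st}(T))$. Conversely, the exact functor $T_{\st}$ sends Yoneda extensions to extensions of lattices. These extensions are semistable because the category of semistable representations with weights in $[0,r]$ is closed under extensions realized this way, and Liu's equivalence identifies the objects. The two maps are inverse since both functors are equivalences and preserve exactness on these sequences. They respect the Baer sum because they are additive exact functors. The crystalline case follows by restricting to $N$-nilpotent-mod-$u$ objects, i.e.\ the subcategory $\Mod(S)_r^{\varphi,N}$. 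This subcategory is closed under the extensions in question, because an extension of crystalline lattices in $H^1_f$ is crystalline by definition.
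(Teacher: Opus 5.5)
Your architecture matches the paper's: exactness of the Kisin sequence, then the comparison $U\mathcal{M}_{\st}\simeq\mathcal{M}^{(r)}_{\Es}\circ\underline{\Em}^{(r)}$, then transport of extensions along the exact equivalence $T_{\st,\star}$. The $\Ext$ part is fine, except that no ``closure under extensions'' is needed, since $T_{\st,\star}$ lands in semistable lattices with weights in $[0,r]$ by construction.

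The gap is in the one step where you depart from the paper: strictness of $\Fil^r$, which the paper gets by citing exactness of $\mathcal{M}^{(r)}_{\Es}$ (Caruso--Liu). You use the untwisted $\underline{\Em}$, send $\mathbf{1}$ to $\Es$, and take $\Fil^r\mathbf{1}=\Fil^rS$. But $(S,\Fil^rS,\varphi_r)=\mathcal{M}^{(r)}_{\Es}(\Es)$ is $\mathbf{1}(r)$, which corresponds to $\ZZ_p(r)$, not $\ZZ_p$. With the correct twist, $\underline{\Em}^{(r)}(\ZZ_p)=\Es\{-r\}=\Es t$ with $\varphi(t)=c_0^{-r}E(u)^rt$. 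Its image is $\mathbf{1}=(S,S,\varphi,N)$, whose $\Fil^r$ is all of $S$: as large as possible, not as small. So what must be shown is that $\Fil^r\mathcal{M}_{\st}(L)\to S$ is onto. Your lifts of $E(u)^r$ and of the divided powers $E(u)^n/n!$ only show that the image contains $\Fil^rS$, which is automatic from $\Fil^rS\cdot\mathcal{M}\subseteq\Fil^r\mathcal{M}$. The fallback of inverting $p$ cannot help either. A nonzero $p$-power-torsion cokernel, which would be a quotient of $S/\Fil^rS$, is invisible rationally.

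The ingredients you name do suffice once they are placed correctly. By Theorem~\ref{thm:ExactKisinModules}, pick $e\in\underline{\Em}^{(r)}(L)$ with $\beta(e)=t$. Since $\underline{\Em}^{(r)}(L)$ has height $\le r$, there is $y\in\varphi^*\underline{\Em}^{(r)}(L)$ with $\varphi_{\lin}(y)=E(u)^re$. Then $1\otimes y\in\Fil^r\mathcal{M}_{\st}(L)$, because $E(u)^r\in\Fil^rS$. On $\varphi^*\Es\{-r\}$ the map $\varphi_{\lin}$ is the injective multiplication by $c_0^{-r}E(u)^r$, which forces $(\varphi^*\beta)(y)=c_0^r(1\otimes t)$, a unit times the generator of $\mathbf{1}$. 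Hence $\Fil^r\mathcal{M}_{\st}(L)\to S$ is surjective. Your intersection argument on the left is valid because the Kisin sequence splits over $\Es$, so $\Fil^r\mathcal{M}_{\st}(T)=\mathcal{M}_{\st}(T)\cap\Fil^r\mathcal{M}_{\st}(L)$. With this correction your route gives a self-contained proof of strictness, replacing the Caruso--Liu citation by a direct use of the height condition.
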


\begin{remark*} Several comments about the above theorems are in order.
\begin{enumerate}
    \item 
Part \eqref{thm:MainThmBreuilpart2} of Theorem~\ref{thm:MainThmBreuil} fills a gap in the paper of Iovita--Marmora \cite{Iovita/Marmora} where crystalline strongly divisible modules were employed to define an explicit complex computing such Selmer groups, and an exactness result as in Theorem~\ref{thm:MainThmBreuil} was implicitly assumed (see Remark \ref{remarkGap} in the main text). 

\item \label{RemarkIntro2} 
The statements \eqref{thm:MainThmBreuilpart1}, \eqref{thm:MainThmBreuilpart2} of Theorem~\ref{thm:MainThmBreuil} also have counterparts in the Breuil--Kisin context. 
  There are multiple ways to enhance the category of free Breuil--Kisin modules to one equivalent with the category of $G_K$-lattices in semistable (resp. crystalline) representations, cf.
 \cite{LiuPhiGHat, DuLiuPhiGHat, GaoBKGK, YaoSemistableLattices},\cite[Appendix F]{EmertonGee}. 
 Analogues of \eqref{thm:MainThmBreuilpart1}, \eqref{thm:MainThmBreuilpart2} of Theorem~\ref{thm:MainThmBreuil} are then valid in these enhancements; moreover, the requirement $r<p-1$ is not needed. We formulate this result explicitly in the case of Breuil--Kisin $G_K$-modules of Gao \cite{GaoBKGK} (see Theorem~\ref{thm:BKGKComputesCohomology} for the precise statement). Regarding the semistable variant, let us point out that Theorem \ref{thm:MainThmPrismatic} likewise admits a logarithmic version (see Remark
\ref{rem:PrismaticComments} \eqref{rem:PrismaticComments2} in the main text). 
\end{enumerate}
\end{remark*}

It is worth pointing out that Theorem~\ref{thm:MainThmPrismatic} leads to further explicit variants. In fact, a key feature of the prismatic formalism is certain ``coordinate independence'' which allows one to propagate the exactness result further, for example, to the theory of Wach modules \cite{Wach, Wach2, ColmezWachM, BergerWachM}. If $K/\QQ_p$ is unramified and $T$
is a crystalline $G_K$-lattice, Abhinandan \cite{AbhinandanBlochKato} constructs a complex $\mathcal{S}^{\bullet}(M)$, defined in terms of the Wach module $M$ corresponding to $T$, computing the Yoneda $\Ext^1$ in that category.
 He can then deduce a natural isomorphism  $H^i(\mathcal{S}^{\bullet}(M))[1/p] \simeq H^i_f(K, T[1/p])$.  
As a consequence of Theorem~\ref{thm:MainThmPrismatic}, we show that  $H^1(\mathcal{S}^{\bullet}(M))$ already agrees with the integral Bloch--Kato Selmer group $H^1_f(K, T)$, assuming that all Hodge--Tate weights of $T$ are non-negative (see Corollary~\ref{cor:WachModulesSyntomic} in the main text).

\medskip
 Theorem \ref{thm:MainThmBreuil} is deduced from Theorem \ref{thm:MainThmKisin} by using the (exact) comparison functor $\mathcal{M}_{\Es}^{(r)}: \Mod_{\Es, \fr}^{\varphi,\leq r}\to\Mod(S)_{r}^\varphi$ between free Breuil-Kisin modules of height $\leq r$ and strongly divisible modules of weight $\leq r$ without monodromy, which is an equivalence (cf.~\cite{CarusoLiu}). We show compatibility of this functor with the covariant \'{e}tale realization functors. Additionally, we prove that its quasi-inverse, denoted by $\underline{\Em}_S^{(r)}$, is  exact (see Theorem \ref{thm:KisinToBreuilIsBiExact}). Using these results, we define for a pair of integers $0\leq r_1, r_2<p-1$ a tensor product of Breuil modules, 
 \[\otimes: \Mod(S)_{r_1,\log}^{\varphi, N}\times \Mod(S)_{r_2,\log}^{\varphi, N}\to \Mod(S)_{r_1+r_2,\log}^{\varphi, N},\] 
 (see Definitions \ref{def:TensorProductQuasiSDMod}, \ref{def:TensorProductSDMod}). As expected, this tensor product commutes  with the \'{e}tale realization functors. Namely, there is an isomorphism of $\ZZ_p[G_K]$-representations 
 \[T_{\st,\star}(M_1\otimes M_2)\simeq T_{\st,\star}(M_1)\otimes_{\ZZ_p}T_{\st,\star}(M_2),\]
 for $M_i\in \Mod(S)_{r_i,\log}^{\varphi, N}$. 

Combining the tensor product with Theorem \ref{thm:MainThmBreuil}, we obtain an interesting application to abelian varieties. Let $A_1, A_2$ be abelian varieties over $K$ with good reduction. For $i=1,2,$ let $\delta_i:A_i(K)\to H^1(K, T_p(A_i))$ be the homomorphism induced by the connecting homomorphisms of the Kummer sequences for $A_i$. It follows by \cite{BlochKato} that the image of $\delta_i$ is precisely the Bloch--Kato Selmer group $H^1_f(K, T_p(A_i))$. 
\begin{introtheorem} [Theorem \ref{thm:cupproduct}, Corollary \ref{cor:abelianvars}]\label{thm:abvarsintro}
The cup product map  \[\delta_1\cup\delta_2: A_1(K)\otimes A_2(K)\to H^2(K, T_p(A_1)\otimes T_p(A_2))\] factors through $\Ext^2_{\Mod(S)_2^{\varphi, N}}(\mathbf{1}, \mathcal{M}_{\st}(T_p(A_1))\otimes \mathcal{M}_{\st}(T_p(A_2)))$. 
\end{introtheorem}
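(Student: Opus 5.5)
We want to show that $\delta_1\cup\delta_2$ factors through $\Ext^2$ of strongly divisible modules. Throughout we work with $r_1=r_2=1$ (Hodge--Tate weights of $T_p(A_i)$ lie in $[0,1]$) and $r=r_1+r_2=2$, and assume $p>3$ so that $2<p-1$ and the tensor product and Liu's equivalence are available in weight $2$.

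\textbf{Step 1: lift each $\delta_i$ to $\Ext^1$.} Since $A_i$ has good reduction, $T_i:=T_p(A_i)$ is a crystalline lattice with Hodge--Tate weights in $[0,1]$. By \cite{BlochKato}, the image of $\delta_i$ is $H^1_f(K,T_i)$. Theorem~\ref{thm:MainThmBreuil}\eqref{thm:MainThmBreuilpart2} then gives
\[\Ext^1_{\Mod(S)_1^{\varphi,N}}(\mathbf{1},\mathcal{M}_{\st}(T_i))\simeq H^1_f(K,T_i).\]
Concretely, a class $x_i\in A_i(K)$ gives a crystalline extension $0\to T_i\to L_i\to\ZZ_p\to0$. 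Applying $\mathcal{M}_{\st}$, which is exact on such sequences by the same theorem, yields a Yoneda class $e_i$ mapping to $\delta_i(x_i)$.

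\textbf{Step 2: form the Yoneda product and define the map.} Tensor $e_1$ on the right with $\mathcal{M}_{\st}(T_2)$ and $e_2$ on the left with $\mathbf{1}$. This requires the tensor product of Definition~\ref{def:TensorProductSDMod} to preserve short exact sequences when one factor is fixed. I would prove this via the exact functor $\underline{\Em}_S^{(r)}$ (Theorem~\ref{thm:KisinToBreuilIsBiExact}): exactness of $\otimes$ on free Breuil--Kisin modules is clear by freeness, and one then transports back along the exact equivalence $\mathcal{M}_{\Es}^{(r)}$. Splicing
\[e_1\otimes\mathcal{M}_{\st}(T_2)\in \Ext^1(\mathcal{M}_{\st}(T_2),\,\mathcal{M}_{\st}(T_1)\otimes\mathcal{M}_{\st}(T_2))\]
with $e_2\in\Ext^1(\mathbf{1},\mathcal{M}_{\st}(T_2))$, using $\mathbf{1}\otimes M\simeq M$, gives a class $e_1\cup e_2$ in $\Ext^2_{\Mod(S)_2^{\varphi,N}}(\mathbf{1},\mathcal{M}_{\st}(T_1)\otimes\mathcal{M}_{\st}(T_2))$. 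Since Yoneda products are bilinear, this defines a bilinear pairing $A_1(K)\times A_2(K)\to\Ext^2$, hence a map on $A_1(K)\otimes A_2(K)$.

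\textbf{Step 3: show compatibility with the cup product.} The functor $T_{\st,\star}$ is exact and commutes with tensor products, i.e.\ $T_{\st,\star}(M_1\otimes M_2)\simeq T_{\st,\star}(M_1)\otimes_{\ZZ_p}T_{\st,\star}(M_2)$. So it induces a map from Yoneda $\Ext^2$ in strongly divisible modules to Yoneda $\Ext^2$ of $\ZZ_p[G_K]$-lattices, and then to $H^2(K,T_1\otimes T_2)$. It remains to identify the image of $e_1\cup e_2$ with $\delta_1(x_1)\cup\delta_2(x_2)$. This is the standard fact that the cup product in continuous Galois cohomology, $H^1(K,T_1)\times H^1(K,T_2)\to H^2(K,T_1\otimes T_2)$, agrees up to a sign convention with the Yoneda product of the extension classes, where the first extension is tensored with $T_2$. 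I would verify this directly with cocycles: choose set-theoretic splittings of $L_1$ and $L_2$ and compute the $2$-cocycle of the spliced $2$-extension.

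\textbf{Main obstacle.} I expect the difficulty to lie in Step 2: that tensoring a short exact sequence with a fixed strongly divisible module stays exact, and that the tensored sequence lies in the correct category ($\Mod(S)_2$ with $2<p-1$). The second main obstacle is the continuous-cohomology comparison in Step 3, in particular the continuity of cocycles and the fact that Yoneda $\Ext^2$ of lattices maps to continuous $H^2$.
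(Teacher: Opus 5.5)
Your proposal is correct and follows the paper's proof essentially step for step. The paper likewise lifts classes via Theorem~\ref{Th:isomorphism_ext}, defines the cup product as the splice $(\alpha\otimes M_2)\circ(\mathbf{1}_{r_1}\otimes\beta)$ using exactness of $\otimes$ (Theorem~\ref{thm:tensorprodexact}, which rests on Theorem~\ref{thm:KisinToBreuilIsBiExact}), and then pushes forward along $T_{\st,\star}$ to Yoneda $\Ext^2$ of Galois lattices, which it identifies with $H^2$; your assumption $p>3$ is exactly the condition $r_1+r_2<p-1$ that the paper imposes implicitly, and note that the splice happens entirely in weight $2$, where $\mathbf{1}_1\otimes\mathcal{M}_{\st}(T_2)$ is the weight-$2$ module attached to $T_2$ rather than $\mathcal{M}_{\st}(T_2)$ itself, so $e_2$ must be replaced by $\mathbf{1}_1\otimes e_2$ before splicing.
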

Let us emphasize that we don't expect an analog of Theorem \ref{thm:MainThmBreuil} to hold for $2$-extensions. 
To prove Theorem \ref{thm:abvarsintro},  we define instead a cup product structure on $\Ext$ groups of Breuil modules (Theorem \ref{thm:cupproduct}), and use it to prove such a factoring.

The reason we are interested in the cup product map $\delta_1\cup\delta_2$ is because it is related to questions about algebraic cycles and $K$-theory. As an example, when $X$ is either an abelian surface or a product $C_1\times C_2$ of smooth projective curves over $K$, the map $\delta_1\cup\delta_2$ plays a significant role in understanding the behavior of the $p$-adic cycle class map to \'{e}tale cohomology 
\[cl_p:\varprojlim_n\CH_0(X)/p^n\to H^4_{\et}(X, \ZZ_p(2)),\] where $\CH_0(X)$ is the Chow group of $0$-cycles on $X$. Namely, when restricted to the kernel of the Abel--Jacobi map, $cl_p$ is built up from cup products as above. When the abelian varieties involved are elliptic curves both having either good ordinary or split multiplicative reduction, the map $cl_p$ has been shown to be injective (\cite{GL21, Yam05}). However, when $X=E_1\times E_2$ is a product of two elliptic curves with good supersingular reduction, we suspect that this might not be the case, especially when the ramification index $e_K$ of $K$ is large enough. Corollary \ref{cor:abelianvars} is a first step towards a strategy to show this. Namely, we suspect that in this case the map $$\Ext^2_{\Rep_{\ZZ_p,\cris}(G_K)}(\ZZ_p, T_p(E_1)\otimes T_p(E_2))\to H^2(K, T_p(E_1)\otimes T_p(E_2))$$ is not injective. Working instead with $2$-extensions of strongly divisible modules might make this problem more tractable, and we are going to explore this in a future paper.

\subsection*{Outline} In Section~\ref{sec:background}, we recall the various categories of Breuil--Kisin and Breuil modules we employ. We also clarify the relationship of various versions of \'{e}tale realization functors from these categories. This is needed in order to effectively pass between Breuil--Kisin and Breuil modules and their attached Galois representations. The main results on exactness are then discussed in Section~\ref{sec:ExactnessResults}. Here we prove Theorem~\ref{thm:MainThmKisin} and deduce Theorems~\ref{thm:MainThmPrismatic}, \ref{thm:MainThmBreuil}   using the relations established in Section~\ref{sec:background}. We also discuss auxiliary results, such as the aforementioned exactness of the functor $\underline{\Em}^{(r)}_S$, or the fact that the functor $\mathcal{M}_{\st}$ from $G_K$--lattices to strongly divisible modules is not even left exact in general. In Section~\ref{sec:Applications}, we define tensor product of strongly divisible modules and prove that it is exact. Using the tensor product to define a cup product on Ext groups of strongly divisible modules, we finally establish Theorem~\ref{thm:abvarsintro}. 

\subsection*{Acknowledgment} We would like to thank Adrian Iovita for many helpful discussions; Theorem \ref{thm:cupproduct} in particular was largely inspired by his ideas. We also thank Tong Liu for his interest in our work, and for several discussions regarding (non-)exactness of various functors. His work \cite{Liu08} played a particularly important role in the genesis of this article. We are also grateful to Bhargav Bhatt and Preston Wake for contributing interesting comments that improved the quality of the paper.
The third named author would like to thank Arthur-C\'{e}sar Le Bras, Carlo Gasbarri, Pierre Guillot and Sacha Ikonicoff for helpful discussions.

The second named author's research was partially supported by the NSF grant DMS-2302196.  
The third named author's  research was partially funded by the French National Research Agency (ANR) under the
project ANR-25-CE40-7869-01.
\vspace{3pt}

\section{Background and preliminary results}\label{sec:background}

\subsection{Review of period rings} \label{subsec:PeriodRings}

In what follows, let $k$ be a perfect field of characteristic $p>0$, and denote by $W=W(k)$ the ring of Witt vectors of $k$. Let $K$ be a finite totally ramified extension of $K_0=W[1/p]$ of degree $e \geq 1$. Recall that there is an isomorphism $\mathcal{O}_K\simeq W[u]/(E(u))$, where $E(u)$ is an Eisenstein polynomial of degree $e$, induced by the evaluation homomorphism $\ev_\pi:W[u]\to\mathcal{O}_K$, where $\pi$ is a fixed uniformizer of $K$. 

\subsubsection{The rings $\Es$ and $\ainf$}

Setting $\Es=W[[u]]$, $\ev_\pi$ extends to a map $\ev_{\pi}:\Es \rightarrow \oh_K$, and likewise, we have the induced isomorphism $\Es/(E(u))\simeq \oh_K$. We further make a choice of a fixed completed algebraic closure $\CC_K=\widehat{\overline{K}}$ of $K$, and of a system $(\pi_s)_s$ of compatible $p^s$-th roots of $\pi$ (taken from $\CC_K$). We let $K_{\infty}$ denote the field $\bigcup_s K(\pi_s)$ and consider the closed subgroup $G_{\infty}=\Gal(\overline{K}/K_{\infty})$ of the Galois group $G_K=\Gal(\overline{K}/K)$.

Let $\ainf$ denote Fontaine's infinitesimal period ring, $\ainf=W(\oh_{\CC_K}^{\flat})$ where $\oh_{\CC_K}^{\flat}$ is the tilt of $\oh_{\CC_K},$  
$$\oh_{\CC_K}^{\flat}=\varprojlim_{x\mapsto x^p} \oh_{\CC_K}/p.$$ 
$\oh_{\CC_K}^{\flat}$ is a rank one valuation ring, and we let $\CC_K^{\flat}$ denote its fraction field. The inclusion $\oh_{\CC_K}^{\flat}\rightarrow \CC_K^{\flat}$ induces the inclusion of Witt vector rings $\ainf \rightarrow W(\CC_K^{\flat})$. Both rings are naturally endowed with a $G_K$-action induced from the action on $\oh_{\CC_K}$ by functoriality.

The choice of the system $(\pi_s)_s$ outlined above determines the element $\underline{\pi}=(\pi_s)_s \in \oh_{\CC_K}^{\flat}$, hence the element $[\underline{\pi}]\in \ainf $ where $[\cdot ]$ denotes the Teichm\"{u}ller lift. We fix the inclusion of rings $\Es\rightarrow \ainf$ given by $u \mapsto [\underline{\pi}]$. The image of $\Es$ is a subring of $\ainf$ fixed by the $G_{\infty}$-action. 
 The ring maps $\Es \rightarrow \ainf$ and $\Es \rightarrow W(\CC_K^{\flat})$ are both flat, the former being faithfully flat. This follows from the fact that all three rings are $p$-adically complete and $p$-torsion free, $\Es$ is noetherian, and reducing modulo $p$ yields the faithfully flat map $k[[u]]\rightarrow \oh_{\CC_K}^{\flat}$ and the flat map $k[[u]]\rightarrow \CC_K^{\flat}$, resp., sending $u$ to $\underline{\pi}\neq 0$. 
 The ring $\ainf$ comes equipped with the map $\theta: \ainf \rightarrow \oh_{\CC_K}$ determined by $[t] \mapsto t^{\sharp}$ where $(\cdot)^{\sharp}: \oh_{\CC_K}^{\flat} \rightarrow \oh_{\CC_K}$ is the unique multiplicative map lifting the projection $\oh_{\CC_K}^{\flat}=\varprojlim_{x\mapsto x^p} \oh_{\CC_K}/p \stackrel{\mathrm{pr}_0}{\longrightarrow} \oh_{\CC_K}/p$. The kernel of $\theta$ is the principal ideal $(\xi)$, where $\xi \in \ainf$ can be taken as $E([\underline{\pi}])$. That is, modulo $E(u),$ the map $\Es \rightarrow \ainf$ becomes the inclusion $\oh_K \subseteq \oh_{\CC_K}$. 

The map $\theta: \ainf \rightarrow \oh_{\CC_K}$ is easily seen to be $G_K$-equivariant, and consequently, the kernel $(\xi) \subseteq \ainf$ is stable under the $G_K$-action. Similarly, the ideal $([\underline{\pi}])$ is $G_K$-stable --- in fact, one easily checks that for all $g \in G_K$, $g [\underline{\pi}]$ is equal to $[\underline{\pi}]$ times a unit; we denote this unit by $\epsilon(g),$ that is, we have $g [\underline{\pi}]=\epsilon(g)[\underline{\pi}]$ for all $g \in G_K$.

\subsubsection{The rings $S$, $A_{\cris}$ and $\widehat{A}_{\st}$}

We recall the definition of the ring $S=W[[u]]^{\PD}$ of divided powers of Faltings (\cite{Faltings99}) and Breuil (\cite{Breuil00, Breuil02}). The ring $S$ is the $p$-adic completion of the divided power envelope of $W[u]$ with respect to the ideal $(E(u))$, on which we have divided powers compatible with the canonical divided powers of the ideal $pW[u]$. 
The ring $S$ can be identified with the subring of $K_0[[u]]$ consisting of all power series of the form
$\displaystyle \alpha=\sum_{n=0}^\infty a_n\frac{u^n}{\lfloor n/e \rfloor !}$ with $a_n\in W$ such that the sequence $(a_n)_{n\geq 0}$ converges $p$-adically to $0$. In particular, $S$ is an $\Es$-algebra (in fact, it is the $p$-adic completion of $\Es[\{E(u)^k/k!\}_{k\geq 1}],$ the divided power envelope of $(E(u))$ in $\Es$). 

The ring $S$ is endowed with the following extra structures: 
\begin{enumerate}
    \item[(1)] A Frobenius homomorphism $\varphi: S\to S$, extending the Frobenius $\varphi$ on $\Es$. 
    \item[(2)] A descending filtration $\{\Fil^i S\}_{i\geq 0}$ of $S$-submodules, where for $i\geq 0,$ $\Fil^i S$ is the $p$-adic closure in $S$ of the ideal generated by $\left\{\displaystyle\frac{E(u)^j}{j!}, \;j\geq i\right\}$.  
    \item[(3)] A continuous $W$-linear derivation $N:S\to S$ such that $N(u)=-u$.  The derivation satisfies $N\varphi=p\varphi N$. Moreover, $N(\Fil^i S)\subset \Fil^{i-1} S$, for all $i\geq 1$. 
\end{enumerate}  
For every $0\leq i\leq p-1$, we have $\varphi(\Fil^i S)\subseteq p^i S$, which allows us to set $\displaystyle\varphi_i:=\frac{\varphi|_{\Fil^i S}}{p^i}$.

In parallel, the crystalline period ring $A_{\cris}$ is defined as the $p$-adic completion of the divided power envelope of the ideal $(\xi)$ in $\ainf$. Comparing universal properties, we have a map $S \rightarrow A_{\cris}, $ determined by $u \mapsto [\underline{\pi}]$ as before. Similarly to the ring $S,$ we have the concrete description
$$A_{\cris}=\left\{ \sum_{n=0}^{\infty} a_n \frac{\xi^n}{n!}\;\Big|\; a_n \in \ainf, a_n \rightarrow 0 \;p\text{-adically}\right\}\,.$$ The ring $A_{\cris}$ is endowed with the following structures:  

\begin{enumerate}
    \item A Frobenius map $\varphi: A_{\cris}\to A_{\cris}$, extending the Witt vector Frobenius on $\ainf$ (there is a unique way to do this, namely, we have $\varphi(\xi^k/k!)=\varphi(\xi)^k/k!,$ which is then extended continuously).
    \item A filtration $\{\Fil^iA_{\cris}\}_{i\geq 0}$ defined as
    \[\Fil^i A_{\cris}:=p\text{-adic closure of}\left(\left\{\frac{\xi^j}{j!}\;\Big|\; j\geq i\right\}\right) \subseteq A_{\cris}.\]
    \item A $G_K$-action extending the action on $A_{\inf}$ (note that this makes sense, this e.g. follows from the fact that the ideal $(\xi)\subseteq \ainf$ is $G_K$-equivariant). 
\end{enumerate}

Let $A_{\cris}\langle x\rangle=A_{\cris}[\{x^k/k!\}_{k\geq 0}]$ be the divided power envelope of the polynomial algebra $A_{\cris}[x]$ compatible with the canonical divided powers of the ideal $pA_{\cris}[x]$. The ring $\widehat{A}_{\st}$, defined by Kato in \cite[\S 3]{Kato94}, is the $p$-adic completion of $A_{\cris}\langle x\rangle$. It is endowed with the following structures: 
\begin{enumerate}
    \item A filtration $\{\Fil^i\widehat{A}_{\st}\}_{i\geq 0}$ defined as
    \[\Fil^i\widehat{A}_{\st}:=\left\{\sum_{n=0}^\infty a_n\frac{x^n}{n!}:\; a_n\in A_{\cris}, \lim\limits_{n\to\infty} a_n=0,\forall n\leq i, a_n\in\Fil^{i-n} A_{\cris}\right\}.\]
    \item A Frobenius $\varphi: \widehat{A}_{\st}\to\widehat{A}_{\st}$, extending that of $A_{\cris}$ such that $\varphi(x)=(1+x)^p-1$. 
    \item A $G_K$-action extending the action on $A_{\cris}$, determined by the formula $$g\cdot x=\epsilon(g)(x+1)-1\,,\;\; g \in G_K\,.$$ 
    \item \label{Monodromy_operatorAst}A $p$-adically continuous  $A_{\cris}$-derivation $N\colon\widehat{A}_{\st}\to\widehat{A}_{\st}$ such that $N(x)=x+1$.      
\end{enumerate}

We put on $\widehat{A}_{\st}$ an $S$-algebra structure by the injection $S\hookrightarrow \widehat{A}_{\st}$, $u\mapsto[\underline{\pi}](1+x)^{-1}$. By \cite[\S 4.2]{Breuil97}, this identifies $S$ with $(\widehat{A}_{\st})^{G_K}$ and it is compatible with all other structures. 

Consider an integer $i$ with $0 \leq i \leq p-1$ and let $A$ be $A_{\cris}$ or $\widehat{A}_{\st}$. Just like in the case of the ring $S$, we have $\varphi(\Fil^iA) \subseteq p^iA$, and we can therefore set $\displaystyle\varphi_i=\frac{\varphi|_{\Fil^iA}}{p^i}$.

While we have a map $A_{\cris}\rightarrow \widehat{A}_{\st}$, we warn the reader that the $S$-algebra structure obtained from $S\rightarrow A_{\cris}\rightarrow \widehat{A}_{\st}$ is different from the one we consider: the two $S$-algebra structures are related, but in the opposite direction. Namely, composing the map $S \rightarrow \widehat{A}_{\st}$ with the quotient map $q:\widehat{A}_{\st} \rightarrow A_{\cris}$ determined by $q(x)=0$, one recovers the map $S \rightarrow A_{\cris}$ given previously. 

\begin{remark} 
The derivation $N$ in \eqref{Monodromy_operatorAst} above is usually called the monodromy operator. Its datum is equivalent to a $p$-adically continuous connection $\nabla:\widehat{A}_{\st}\to\widehat{A}_{\st}\otimes_S\Omega^1_{\log}$, satisfying $\nabla(A_{\cris})=0$ and $\nabla(x)=-(1+x)\otimes u^{-1}du$. 
Here we denoted by $\Omega^1_{\log}$ the module of continuous logarithmic differential $1$-forms of $S$ over $W$ and $d\colon S \to \Omega^1_{\log}$ the canonical differential, see \cite[2.1.3]{Iovita/Marmora}. 
The module  $\Omega^1_{\log}$ is a free $S$-module of rank $1$ and choosing the base $-u^{-1}du$ allows to pass from the connection to the monodromy operator and conversely : we have $\nabla(f) = -N(f) \otimes u^{-1}du$. 

Same considerations apply also to the ring $S$ and its monodromy operator $N$, and the canonical differential $d: S \rightarrow \Omega^1_{\log}$. 
\end{remark}

\subsection{Kisin modules and functors to $\ZZ_p$-lattices}

We now introduce several categories of Breuil--Kisin modules. From now on, we refer to such modules as ``Kisin modules'' only, in order to distinguish them clearly from strongly divisible modules of Breuil (discussed in \S\ref{subsec:BreuilModules}).

\begin{definition}
A \textit{Kisin module} is a finitely generated $\Es$-module $\Em$ endowed with an isomorphism $$\varphi_{\Em, \lin}: (\varphi^* \Em)[1/E(u)] \rightarrow \Em[1/E(u)].$$ 

A morphism of Kisin modules $\Em \rightarrow \Em'$ is an $\Es$-linear map $f:\Em \rightarrow \Em'$ compatible with $\varphi_{\Em, \lin}$ and $\varphi_{\Em', \lin}$, i.e., such that the following diagram is commutative:
\begin{center}
\begin{tikzcd}
(\varphi^* \Em)[1/E(u)] \ar[r, "\varphi_{\Em, \lin}"] \ar[d, "\varphi^*(f)"] & \Em[1/E(u)] \ar[d, "f"] \\
 (\varphi^* \Em')[1/E(u)] \ar[r, "\varphi_{\Em', \lin}"] & \Em'[1/E(u)]
\end{tikzcd}
\end{center}
Denote by $\Mod_{\Es}^{\varphi}$ the category of all Kisin modules. Then $\Mod_{\Es, \fr}^{\varphi}$ denotes the category of \textit{free Kisin modules}, i.e., the full subcategory of $\Mod_{\Es}^{\varphi}$ consisting of all Kisin modules $\Em$ where $\Em$ is finite free as an $\Es$-module.
\end{definition}

While free Kisin modules are the classical notion (as defined e.g. in \cite{KisinShimura}), the more general category (considered, for example, in \cite{BMS1}) has the advantage of being abelian, with kernels, cokernels etc. inherited from the category of $\Es$-modules. Moreover, the category $\Mod_{\Es}^{\varphi}$  possesses the structure of abelian tensor category, which is well-known but rarely described explicitly (see e.g. \cite{LevinWangErickson} for a variant). Concretely, $\otimes$ and internal $\Hom$'s are given as follows. 

\begin{definition}
Given two Kisin modules $\Em_1, \Em_2,$ the tensor product $\Em_1 \otimes \Em_2$ is the Kisin module given as follows: 
\begin{enumerate}
\item The underlying $\Es$-module is $\Em_1 \otimes_{\Es}\Em_2.$ 
\item Under the canonical identification $$\varphi^*(\Em_1 \otimes_{\Es} \Em_2)[1/E(u)]=\varphi^*(\Em_1)[1/E(u)]\otimes_{\Es[1/E(u)]}\varphi^*(\Em_2)[1/E(u)],$$ $\varphi_{\Em_1\otimes \Em_2, \lin}$ is given by $\varphi_{\Em_1, \lin}\otimes \varphi_{\Em_2, \lin}$.
\end{enumerate}

Given two Kisin modules $\Em_1, \Em_2$, the internal Hom $\underline{\Hom}(\Em_1, \Em_2)$ is the Kisin module defined as follows:
\begin{enumerate}
\item The underlying $\Es$-module is $\Hom_{\Es}(\Em_1, \Em_2)$ (the abstract $\Es$-module homomorphisms). 
\item Under the canonical identification (given by flat base change) $$\varphi^*\Hom_{\Es}(\Em_1 , \Em_2)[1/E(u)]=\Hom_{\Es[1/E(u)]}(\varphi^*(\Em_1)[1/E(u)],\varphi^*(\Em_2)[1/E(u)]),$$ $\varphi_{\underline{\Hom}(\Em_1, \Em_2), \lin}$ is given by $$\varphi_{\underline{\Hom}(\Em_1, \Em_2), \lin}(g)= \varphi_{\Em_2, \lin}\circ g \circ \varphi_{\Em_1, \lin}^{-1}.$$
\end{enumerate}
In particular, for a Kisin module $\Em$, define the dual $\Em^{\vee}$ as $\underline{\Hom}(\Em, \Es),$ where $\Es$ is the Kisin module given by the ring $\Es$ and $\varphi_{\Es, \lin}$ being the linearization of $\varphi$ on $\Es$.
 \end{definition}
It is clear that if $\Em_1, \Em_2$ are free Kisin modules, then so are the modules  $\Em_1\otimes\Em_2$ and $\underline{\Hom}(\Em_1,\Em_2)$. 

Given a free Kisin module $\Em$, there is a natural, $\varphi$-semilinear map \[\Em \rightarrow \varphi^* \Em=\Es \otimes_{\Es, \varphi} \Em\] given by $m \mapsto 1 \otimes m. $ Composing with $\varphi_{\Em, \lin},$ one obtains the map $\varphi_{\Em}: \Em \rightarrow \Em[1/E(u)]$. We call $\Em$ \textit{of finite height} (or \textit{effective}) if $\varphi_{\Em}(\Em)\subseteq \Em$, i.e., if this map comes from a $\varphi$-semilinear map $\varphi_{\Em}: \Em \rightarrow \Em$. In this case, it is easy to see that $\varphi_{\Em, \lin}$ is the map obtained from $\varphi_{\Em}$ by linearization followed by inverting $E(u)$. Denote by $\Mod_{\Es, \fr}^{\varphi, \geq 0}$ the category of all free Kisin modules of finite height.

The central category of our interest is the following category.

\begin{definition}
Given $r \in \NN$, a free effective Kisin module $\Em$ is \textit{of height $\leq r$} if the $\Es$-submodule of $\Em$ generated by $\varphi_{\Em}(\Em)$ contains $E(u)^r\Em$. Equivalently, the linearization $\varphi_{\Em, \lin}=1 \otimes \varphi_{\Em}: \Es\otimes_{\Es, \varphi} \Em \rightarrow \Em$ (is injective and) has cokernel annihilated by $E(u)^r$. Denote the category of all free Kisin modules of height $\leq r$ by $\Mod_{\Es, \fr}^{\varphi, \leq r}$ (it would make sense to include also the superscript ``$\geq 0$'', but we leave it out in order to ease notation).  
\end{definition}

\begin{remark}\label{rem:Kisinexactcat}
The full subcategories $\Mod_{\Es, \fr}^{\varphi}, \Mod_{\Es, \fr}^{\varphi, \geq 0}$ and $\Mod_{\Es, \fr}^{\varphi, \leq r}$ of $\Mod_{\Es}^{\varphi}$ are closed under extensions. This is clear in the case of $\Mod_{\Es, \fr}^{\varphi}$ and $ \Mod_{\Es, \fr}^{\varphi, \geq 0}$. For $\Mod_{\Es, \fr}^{\varphi, \leq r}$, one needs to check that for a short exact sequence $0 \to \Em_1 \to \Em_2 \to \Em_3 \to 0$ in $\Mod_{\Es, \fr}^{\varphi, \geq 0},$ $\Em_2$ is of height $\leq r$ if $\Em_1, \Em_3$ are of height $\leq r$. 

To see this, given $\Em \in \Mod_{\Es, \fr}^{\varphi, \geq 0},$ consider the \textit{Nygaard filtration} on $\varphi^*\Em,$ $$\Fil^k \varphi^*\Em=\{x \in \varphi^*\Em\;|\; \varphi_{\Em, \lin}(x)\in E(u)^k\Em\}, \;\; k \in \ZZ.$$
One can now consider the filtered $K$-vector space $D_{\dr}(\Em)=(\varphi^*\Em/E(u)\varphi^*\Em)[1/p]$ and finally, the graded $K$-vector space $D_{\HT}(\Em)=\bigoplus_{k}\gr^k D_{\dr}(\Em)$. It can now be easily verified that $D_{\HT}$ is an exact functor into graded $K$-vector spaces, and that $\Em$ being of height $\leq r$ translates into the fact that $D_{\HT}(\Em)_k=0$ for all $k \geq r+1$. This proves the claim. 

Consequently, the categories $\Mod_{\Es, \fr}^{\varphi}, \Mod_{\Es, \fr}^{\varphi, \geq 0}$ and $\Mod_{\Es, \fr}^{\varphi, \leq r}$ are exact in the sense of Quillen (see \cite[\S A.3]{Positselski}
for the Axioms of an exact category). This follows from the characterization of exact categories as full subcategories of abelian categories closed under extension (see \cite[\S A.6]{Positselski}).
\end{remark}

\begin{definition}
Denote by $\Rep_{\ZZ_p}(G_{\infty})$ the category of finitely generated $\ZZ_p$-modules endowed with a continuous action of $G_{\infty}$. The \textit{\'{e}tale realization functor} is the functor $T_{\Es}: \Mod_{\Es}^{\varphi} \rightarrow \Rep_{\ZZ_p}(G_{\infty}) $ given by 
\[\Em \mapsto (\Em\otimes_{\Es}W(\CC_K^{\flat}))^{\varphi=1},\]
where the $\varphi$-operator on $\Em\otimes_{\Es}W(\CC_K^{\flat})$ to take invariants of is $\varphi_{\Em}\otimes \varphi_{W(\CC_K^{\flat})}.$ The $G_{\infty}$-action is induced by the action on the $W(\CC_K^{\flat})$-component.
\end{definition}

In order to work effectively with $T_{\Es}$, the following auxiliary category is useful. Denote by $\Phi \Mod_{W(\CC_K^{\flat})}$ the category of \textit{\'{e}tale $\varphi$-modules over $W(\CC_K^{\flat}),$} i.e., the category of all finitely generated $W(\CC_K^\flat)$-modules $M$ endowed with an isomorphism $\varphi_{M,\lin}: \varphi^* M \rightarrow M$ or, equivalently, with a $\varphi$-semilinear bijective map $\varphi_{M}: M \rightarrow M$. Then it is well--known (e.g. by \cite[Proposition~4.1.1]{Katz} via d\'{e}vissage) that  $\Phi \Mod_{W(\CC^{\flat})}$ is equivalent to the category $\mathsf{mod}_{\ZZ_p}$ of all finitely generated $\ZZ_p$-modules via the functor $T_{\Phi}:M\mapsto M^{\varphi_M=1},$ with the quasi-inverse given by $T \mapsto M:=T \otimes_{\ZZ_p} W(\CC_K^{\flat})$ (where $\varphi_M=1\otimes \varphi_{W(\CC_K^{\flat})}$). As any equivalences between abelian categories, both these functors are exact. Disregarding the $G_{\infty}$-action, the \'{e}tale realization functor $T_{\Es}$ may be treated as the composite $T_{\Phi} \circ \Phi$ where $\Phi: \Mod_{\Es}^{\varphi} \rightarrow \Phi \Mod_{W(\CC_K^{\flat})}$ is the base-change functor $\Em \mapsto M:=\Em\otimes_{\Es}W(\CC_K^{\flat})$ (with $\varphi_M=\varphi_{\Em}\otimes \varphi_{W(\CC_K^{\flat})}$).

Letting $\oh_{\mathcal{E}}$ denote the $p$-adic completion of $\Es[1/u],$ the map $\Es \rightarrow W(\CC_K^{\flat})$ induces the map $\oh_{\mathcal{E}}\rightarrow W(\CC_K^{\flat})$ which is easily seen to be faithfully flat (both are DVR's with uniformizer $p$). \label{faithful_flat OhE_to_WWCC}

In the literature, several contravariant versions of $T_{\Es}$ are sometimes used. To clarify their relationship, we need a bit more notation. Denote by $\widehat{\oh_{\mathcal{E}}^{\mathrm{ur}}}$ the $p$-adic closure of the integers of a maximal unramified extension of $\oh_{\mathcal{E}}$ in $W(\CC_K^{\flat})[1/p]$, and let $\Es^{\mathrm{ur}}$ denote the intersection $\widehat{\oh_{\mathcal{E}}^{\mathrm{ur}}} \cap \ainf$. It can be shown that $\Es^{\mathrm{ur}}$ is stable under $\varphi$ of $\ainf$. Then we have the following compatibilities. These are well-known but we do not know of an explicit reference, so we include a brief proof.

\begin{lemma}\label{lem:KisinDualT}
For $\Em \in \Mod_{\Es, \fr}^{\varphi, \geq 0},$ one has the following natural isomorphisms of $\ZZ_p[G_{\infty}]$-modules:
$$\hom_{\Es, \varphi}(\Em, \Es^{\mathrm{ur}}) \simeq \hom_{\Es, \varphi}(\Em, \ainf) \simeq \hom_{\Es, \varphi}(\Em, W(\CC_K^{\flat})) \simeq T_{\Es}(\Em)^{\vee} .$$
Here $\hom_{\Es, \varphi}$ denotes the group of homomorphisms of $\Es$-modules $\Em$ endowed with a $\varphi$-semilinear operator $\varphi_{\Em}: \Em \rightarrow \Em$, and $\vee$ denotes taking the $\ZZ_p$-linear dual representation.
\end{lemma}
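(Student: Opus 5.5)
The plan is to prove the chain from right to left, starting with the easiest isomorphism and then showing that the two successive restrictions of the target ring change nothing. Throughout, $\Em$ is free of rank $d$ with $\varphi_{\Em}\colon \Em\to\Em$.

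\emph{Step 1: $\hom_{\Es,\varphi}(\Em, W(\CC_K^{\flat}))\simeq T_{\Es}(\Em)^{\vee}$.} Put $M=\Em\otimes_{\Es}W(\CC_K^{\flat})$. Since $\Em$ has finite height and $E(u)$ is a unit in $W(\CC_K^{\flat})$ (its image is $E([\underline{\pi}])$, whose reduction mod $p$ is $\underline{\pi}^e\neq 0$), $M$ is an étale $\varphi$-module, so $M\simeq T_{\Es}(\Em)\otimes_{\ZZ_p}W(\CC_K^{\flat})$ by the equivalence $T_\Phi$. Extension of scalars identifies $\hom_{\Es,\varphi}(\Em, W(\CC_K^{\flat}))$ with $\hom_{W(\CC_K^{\flat}),\varphi}(M, W(\CC_K^{\flat}))$. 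Applying the equivalence $T_\Phi$ with $W(\CC_K^{\flat})\leftrightarrow\ZZ_p$, this becomes $\Hom_{\ZZ_p}(T_{\Es}(\Em),\ZZ_p)$. All identifications are functorial in $W(\CC_K^{\flat})$, so they are $G_\infty$-equivariant; here $G_\infty$ acts on homomorphisms through the target ring and fixes $\Es$.

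\emph{Step 2: the inclusions $\Es^{\mathrm{ur}}\subseteq\ainf\subseteq W(\CC_K^{\flat})$ induce isomorphisms on $\hom_{\Es,\varphi}$.} Injectivity is clear, so the content is surjectivity: every $\varphi$-compatible $f\colon\Em\to W(\CC_K^{\flat})$ lands in $\Es^{\mathrm{ur}}$. Choose a basis $e_1,\dots,e_d$ and write $\varphi_{\Em}(e)=Ae$ with $A\in M_d(\Es)$. The height condition gives $B$ with $AB=E(u)^r$ for some $r$. Then $x=(f(e_i))_i$ satisfies $\varphi(x)=Ax$.

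\emph{Landing in $\widehat{\oh_{\mathcal{E}}^{\mathrm{ur}}}$.} Over $\oh_{\mathcal{E}}$ the module $\Em\otimes\oh_{\mathcal{E}}$ is étale. Using the classical theorem (Fontaine) that $\widehat{\oh_{\mathcal{E}}^{\mathrm{ur}}}\otimes_{\ZZ_p}T$ recovers the étale module, the $\varphi$-invariants of $\Em^\vee\otimes W(\CC_K^{\flat})$ and of $\Em^\vee\otimes\widehat{\oh_{\mathcal{E}}^{\mathrm{ur}}}$ both have $\ZZ_p$-rank $d$. The latter sit inside the former and are saturated, since $\widehat{\oh_{\mathcal{E}}^{\mathrm{ur}}}$ is $p$-saturated in $W(\CC_K^{\flat})$. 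Hence they coincide.

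\emph{Landing in $\ainf$.} This is the main obstacle, and it is where the finite height hypothesis is used. I would follow Fontaine's integrality argument. Reduce mod $p$ first: $\bar x\in(\CC_K^{\flat})^d$ satisfies $\bar x^{(p)}=\bar A\bar x$. If $\min_i v(\bar x_i)=m<0$, then the left side has valuation $pm$. Since $\bar A$ has entries in $k[[u]]$ of nonnegative valuation, the right side has valuation $\ge m$. As $pm<m$, this is a contradiction, so $\bar x\in\oh_{\CC_K}^{\flat}$. Then lift by successive approximation. If $x\equiv y \pmod{p^n}$ with $y\in\ainf^d$, write $x=y+p^nz$. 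The element $z$ satisfies $\varphi(z)=Az+p^{-n}(Ay-\varphi(y))$. Because $W(\CC_K^{\flat})\cap \ainf[1/p]=\ainf$, the error term lies in $\ainf$, so the same valuation argument applied to $\bar z$ shows $\bar z$ is integral. Since $\ainf$ is $p$-adically closed, $x\in\ainf^d$. If the inhomogeneous step is awkward, I would instead cite Kisin's/Fontaine's lemma that $\varphi$-equivariant maps from finite height modules land in $\ainf$.

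Combining the two inclusions gives $x\in(\widehat{\oh_{\mathcal{E}}^{\mathrm{ur}}}\cap\ainf)^d=(\Es^{\mathrm{ur}})^d$. Finally, all maps are equivariant because $G_\infty$ preserves $\Es^{\mathrm{ur}}$, $\ainf$ and $W(\CC_K^{\flat})$ and fixes $\Es$, and naturality in $\Em$ is evident from the constructions.
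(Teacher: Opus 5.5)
Your proof is correct, and its skeleton matches the paper's. The last isomorphism comes from the $\varphi$-semilinear Hom--tensor adjunction plus the equivalence $\Phi\Mod_{W(\CC_K^{\flat})}\simeq\mod_{\ZZ_p}$. The first two come from observing that the obvious inclusions are equalities, because every $\varphi$-compatible $\Es$-linear map $\Em\to W(\CC_K^{\flat})$ lands in $\Es^{\mathrm{ur}}$.

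The difference is that the paper takes this last fact directly from Fontaine (Proposition~1.8.3 of \cite{Fontaine90}), while you prove it. You split it into two independent integrality statements:
\begin{itemize}
\item landing in $\widehat{\oh_{\mathcal{E}}^{\mathrm{ur}}}$, via Fontaine's equivalence for \'{e}tale $\varphi$-modules over $\oh_{\mathcal{E}}$ together with a rank count and $p$-saturation;
\item landing in $\ainf$, via the mod $p$ valuation estimate and successive approximation.
\end{itemize}
The second step is sound. The error term $p^{-n}(Ay-\varphi(y))$ lies in $\ainf^d$ because $\ainf$ is $p$-saturated in $W(\CC_K^{\flat})$. The inhomogeneous estimate works since the right-hand side has valuation at least $\min(m,0)=m$ when $m<0$. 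The limit argument works because $\ainf$ is $p$-adically complete and $\bigcap_n p^nW(\CC_K^{\flat})=0$.

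The paper's route is a one-line citation. Yours is self-contained modulo the basic \'{e}tale theory over $\oh_{\mathcal{E}}$. It also makes visible which hypotheses each half uses:
\begin{itemize}
\item the $\ainf$-part uses only that the Frobenius matrix $A$ has entries in $\Es$, i.e.\ effectivity; the relation $AB=E(u)^r$ you mention is never needed;
\item the $\widehat{\oh_{\mathcal{E}}^{\mathrm{ur}}}$-part uses only \'{e}taleness after base change to $\oh_{\mathcal{E}}$.
\end{itemize}

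One point to state explicitly: the identification $\hom_{\Es,\varphi}(\Em,R)\simeq(\Em^{\vee}\otimes_{\Es}R)^{\varphi=1}$ underlying your rank count requires $E(u)\in R^{\times}$. This is because $\Em^{\vee}$ carries a Frobenius only after inverting $E(u)$. The condition holds for both $R=\widehat{\oh_{\mathcal{E}}^{\mathrm{ur}}}$ and $R=W(\CC_K^{\flat})$, since $E(u)\equiv u^e$ is a unit modulo $p$ in each.
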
   

\begin{proof}
One clearly has the inclusions $$\hom_{\Es, \varphi}(\Em, \Es^{\mathrm{ur}}) \subseteq \hom_{\Es, \varphi}(\Em, \ainf) \subseteq \hom_{\Es, \varphi}(\Em, W(\CC_K^{\flat})).$$ On the other hand, by \cite[Proposition~1.8.3]{Fontaine90}, the image of $\Em$ in $W(\CC_K^{\flat})$ under a $\Es$-linear map compatible with $\varphi$ always lands in $\Es^{\mathrm{ur}}$, so the inclusions are equalities.

To justify the last isomorphism in the Lemma, note that one has a natural isomorphism $\hom_{\Es, \varphi}(\Em, W(\CC_K^{\flat}))\simeq \hom_{W(\CC_K^{\flat}), \varphi}(\Em\otimes_{\Es}W(\CC_K^{\flat}), W(\CC_K^{\flat}))$ by a $\varphi$-semilinear version of Hom-tensor adjunction. But the latter is just the Hom group in the category $\Phi \Mod_{W(\CC_K^{\flat})}$, and by the above-mentioned equivalence $\Phi \Mod_{W(\CC_K^{\flat})} \stackrel{\sim} \rightarrow \mod_{\ZZ_p}$, it is thus naturally isomorphic to $$\Hom_{\ZZ_p}((\Em\otimes_{\Es}W(\CC_K^{\flat}))^{\varphi=1},W(\CC_K^{\flat})^{\varphi=1})=\Hom_{\ZZ_p}(T_{\Es}(\Em),\ZZ_p)=T_{\Es}(\Em)^{\vee}.$$ 
It is now easy to verify that all the listed isomorphisms are compatible with the natural $G_{\infty}$-actions on the respective $\ZZ_p$-modules.
\end{proof}

The following properties of the functor $T_{\Es}$ are also well-known, but since the proofs are either not written down explicitly, or written for a related but distinct variant (e.g., contravariant functors etc.), we include the proof for reader's convenience.

\begin{proposition}\label{prop:PropertiesOfT}
\begin{enumerate}
\item The functor $T_{\Es}$ is exact.
\item $T_{\Es}$ is compatible with $\otimes$ and $\underline{\Hom}$.
\item $T_{\Es}$ restricts to a functor $T_{\Es}: \Mod_{\Es, \fr}^{\varphi} \rightarrow \Rep_{\ZZ_p, \fr}(G_{\infty})$, where $\Rep_{\ZZ_p, \fr}(G_{\infty})$ is the full subcategory of $\Rep_{\ZZ_p}(G_{\infty})$ consisting of all objects whose underlying $\ZZ_p$-module is finite free. Moreover, $T_{\Es}$ is fully faithful on $\Mod_{\Es, \fr}^{\varphi}$.
\end{enumerate}
\end{proposition}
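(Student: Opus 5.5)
Throughout, I use the factorization $T_{\Es}=T_\Phi\circ\Phi$ recalled above, where $\Phi(\Em)=\Em\otimes_{\Es}W(\CC_K^{\flat})$ and $T_\Phi$ is the equivalence $\Phi\Mod_{W(\CC_K^{\flat})}\simeq\mod_{\ZZ_p}$.

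\textbf{Part (1): exactness.} $T_\Phi$ is an equivalence of abelian categories, hence exact. So it suffices to show that $\Phi$ is exact. Kernels and cokernels in $\Mod_{\Es}^{\varphi}$ are computed on underlying $\Es$-modules, and the map $\Es\to W(\CC_K^{\flat})$ is flat. Hence $\Phi$ is exact on underlying modules. It remains to check that $\Phi(\Em)$ is really an étale $\varphi$-module, i.e. that $\varphi_{\lin}$ becomes an isomorphism after base change. The given isomorphism $\varphi_{\Em,\lin}$ lives over $\Es[1/E(u)]$. The image of $E(u)$ in $W(\CC_K^{\flat})$ is a unit, since modulo $p$ it is $\underline{\pi}^e\neq0$ and the ring is $p$-adically complete with residue field $\CC_K^{\flat}$. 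Therefore $\Es\to W(\CC_K^{\flat})$ factors through $\Es[1/E(u)]$, and $\varphi_{\Em,\lin}\otimes 1$ is an isomorphism $\varphi^*\Phi(\Em)\to\Phi(\Em)$. Finally, the $G_\infty$-action is functorial, so the equivariance of maps is automatic.

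\textbf{Part (2): compatibility with $\otimes$ and $\underline{\Hom}$.}
1. $\Phi$ commutes with $\otimes$ tautologically.
2. $\Phi$ commutes with $\underline{\Hom}$ by flat base change, which applies because $\Em_1$ is finitely presented over the noetherian ring $\Es$. The Frobenius $g\mapsto\varphi\circ g\circ\varphi^{-1}$ is visibly preserved.
3. For $T_\Phi$, I show that it is a tensor equivalence. For $M=T\otimes_{\ZZ_p}W(\CC_K^{\flat})$, the natural maps
$$T_\Phi(M_1)\otimes_{\ZZ_p}T_\Phi(M_2)\to T_\Phi(M_1\otimes M_2),\qquad T_\Phi(\underline{\Hom}(M_1,M_2))\to\Hom_{\ZZ_p}(T_\Phi M_1,T_\Phi M_2)$$
become, via the quasi-inverse, the identities $(T_1\otimes T_2)\otimes W=(T_1\otimes W)\otimes_W(T_2\otimes W)$ and $\Hom_W(T_1\otimes W,T_2\otimes W)=\Hom_{\ZZ_p}(T_1,T_2)\otimes W$. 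Here $W=W(\CC_K^{\flat})$, and I use $W(\CC_K^{\flat})^{\varphi=1}=\ZZ_p$ together with flatness of $\ZZ_p\to W(\CC_K^{\flat})$.
4. The natural maps are $G_\infty$-equivariant because they are functorial.

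\textbf{Part (3): freeness and full faithfulness.} If $\Em$ is free of rank $d$, then $\Phi(\Em)$ is free of rank $d$. Under the quasi-inverse $T\mapsto T\otimes W(\CC_K^{\flat})$, a finitely generated $T$ with free base change must itself be free of rank $d$, because torsion survives faithfully flat base change. Full faithfulness is the main obstacle.

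1. Faithfulness follows from injectivity of $\Es\to W(\CC_K^{\flat})$ on free modules, because $T_\Phi$ is an equivalence.
2. For fullness, I reduce via $\underline{\Hom}$ and part (2) to the following claim: for free $\Em$, every $\varphi$-invariant element of $\Em\otimes_{\Es}W(\CC_K^{\flat})$ already lies in $\Em$.
3. I would first pass to $\oh_{\mathcal E}$. Since $\oh_{\mathcal E}\to W(\CC_K^{\flat})$ is faithfully flat, and the $G_\infty$-invariants of $W(\CC_K^{\flat})$ are $\oh_{\mathcal E}$ (Fontaine's theory of $\varphi$-modules over $\oh_{\mathcal E}$), I get $\Hom$ over $\oh_{\mathcal E}$ equal to $\Hom$ of $G_\infty$-representations.
4. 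The remaining step is descent from $\Em\otimes\oh_{\mathcal E}$ to $\Em$. Here I would use Lemma~\ref{lem:KisinDualT}, namely Fontaine's Proposition 1.8.3 that $\varphi$-compatible maps into $W(\CC_K^{\flat})$ land in $\Es^{\mathrm{ur}}$. That gives $(\Em\otimes\Es^{\mathrm{ur}})^{\varphi=1}$ as the realization, and then $G_\infty$-invariants give $(\Es^{\mathrm{ur}})^{G_\infty}=\Es$.

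The care point is step 4. The finite height condition is only available for the effective modules, so for general free $\Em$ I would twist by a power of $E(u)$-type Tate objects, or alternatively apply Kisin's full faithfulness result for free Kisin modules as given in the literature.
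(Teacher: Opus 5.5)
Your treatment of (1), (2), the freeness part of (3) and faithfulness agrees with the paper. Your check that $E(u)$ becomes a unit in $W(\CC_K^{\flat})$, so that $\Phi$ really lands in étale $\varphi$-modules, is a point the paper leaves implicit. The problem is fullness.

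Two slips come first, and both are repairable by citing Fontaine's equivalence correctly. In step 2 the claim must concern elements that are both $\varphi$- and $G_\infty$-invariant. As written it is false: $\Es\{-1\}$ has no nonzero $\varphi$-fixed element, yet $T_{\Es}(\Es\{-1\})=\ZZ_p(-1)\neq 0$. In step 3, $W(\CC_K^{\flat})^{G_\infty}=W\bigl((\CC_K^{\flat})^{G_\infty}\bigr)$ is the Witt ring of a perfect field, so it is much bigger than $\oh_{\mathcal{E}}$. Fontaine's theory instead works with $\widehat{\oh_{\mathcal{E}}^{\mathrm{ur}}}$, whose $G_\infty$-invariants are $\oh_{\mathcal{E}}$.

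The genuine gap is step 4. Lemma~\ref{lem:KisinDualT} says that for \emph{effective} $\Em$, $\varphi$-compatible maps $\Em\to W(\CC_K^{\flat})$ land in $\Es^{\mathrm{ur}}$. In other words, the $\varphi$-invariants of $\Em^{\vee}\otimes_{\Es}W(\CC_K^{\flat})$ lie in $\Em^{\vee}\otimes_{\Es}\Es^{\mathrm{ur}}$. It says nothing about $\Em\otimes_{\Es}W(\CC_K^{\flat})$, and there the analogue fails. If $xt\in\Es\{-1\}\otimes_{\Es}\ainf$ is $\varphi$-invariant, then $x=c_0^{-1}E(u)\varphi(x)$. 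Iterating, $x$ is divisible modulo $p$ by $\underline{\pi}^{e(1+p+\dots+p^{n})}$ for every $n$, so $x\in p\ainf$, and repeating the argument gives $x=0$.

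So your argument proves $T_{\Es}(\mathfrak{N})^{G_\infty}=\mathfrak{N}^{\varphi=1}$ only when $\mathfrak{N}$ is dual to an effective module. The module $\underline{\Hom}(\Em_1,\Em_2)$ is in general neither effective nor dual-effective. Twisting cannot rescue this inside your reduction. Twisting $\Em_1$ and $\Em_2$ by the same $\Es\{s\}$ leaves $\underline{\Hom}(\Em_1,\Em_2)$ unchanged. Twisting $\underline{\Hom}(\Em_1,\Em_2)$ alone changes both of the groups you are trying to compare.

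What is missing is the descent from $\oh_{\mathcal{E}}$ to $\Es$, namely Kisin's full faithfulness of $\Em\mapsto\oh_{\mathcal{E}}\otimes_{\Es}\Em$ on modules of finite $E(u)$-height. That result needs its own argument using the height condition; it does not follow formally from Lemma~\ref{lem:KisinDualT}.

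Your fallback is exactly the paper's route. The paper cites full faithfulness of the contravariant functor $\Hom_{\Es,\varphi}(-,\Es^{\mathrm{ur}})$ on effective free modules (Kisin; Liu, Lemma 3.4.5). It transfers this to $T_{\Es}$ via Lemma~\ref{lem:KisinDualT} and the autoequivalence $T\mapsto T^{\vee}$. It then extends to all free Kisin modules by twisting the \emph{pair} $\Em_1,\Em_2$ to effective ones. That twist is harmless because $\Em\mapsto\Em\{-s\}$ is an autoequivalence compatible with $T_{\Es}$.
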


\begin{proof}[Proof of Proposition~\ref{prop:PropertiesOfT}]
By the above discussion,  both $T_{\Phi}$ and $\Phi$ are exact, so $T_{\Es}$ is also exact; this proves (1). 

To prove (2), let us prove compatibility with $\otimes$ only, since $\underline{\Hom}$ is very similar. Note that we have a natural isomorphism \begin{align*}
\Phi(\Em_1 \otimes \Em_2)&=(\Em_1 \otimes_{\Es} \Em_2)\otimes_{\Es}W(\CC_K^{\flat}) \\ &\simeq (\Em_1 \otimes_{\Es}W(\CC_K^{\flat}))\otimes_{W(\CC_K^{\flat})} (\Em_2\otimes_{\Es}W(\CC_K^{\flat}))=\Phi(\Em_1)\otimes_{W(\CC_K^{\flat})}\Phi(\Em_1),
\end{align*}
compatible with the $\varphi$-operators as well as with the $G_{\infty}$-action. By the previous discussion, we may identify $\Phi(\Em_i)=T_i \otimes_{\ZZ_p}W(\CC_K^{\flat})$ as $\varphi$-modules, where $T_i=T_{\Es}(\Em_i)$ for $i=1,2$. This, in turn, is easily seen to be canonically isomorphic to $(T_1 \otimes_{\ZZ_p} T_2)\otimes_{\ZZ_p}{W(\CC_K^{\flat})},$ again in a manner that is compatible with $\varphi$'s and with $G_{\infty}$-action. Then we have 
$$T_{\Es}(\Em_1\otimes \Em_2)\simeq \left((T_1 \otimes_{\ZZ_p} T_2)\otimes_{\ZZ_p}{W(\CC_K^{\flat})}\right)^{\varphi=1}=T_1 \otimes_{\ZZ_p} T_2,$$
and that these isomorphisms are compatible with the natural $G_{\infty}$-actions on both sides.

When $\Em$ is a free Kisin module, then $\Phi(\Em)$ is a finite free $W(\CC_K^{\flat})$-module. Since it is also isomorphic  to $T_{\Es}\otimes_{\ZZ_p} W(\CC_K^{\flat})$ for $T=T_{\Es}(\Em)$, it follows that $T_{\Es}(\Em)$ is finite free as a $\ZZ_p$-module, proving the freeness part of (3).

The full faithfullness statement in (3) is implicit in \cite{Kisin1}, stated more explicitly in \cite[Lemma 3.4.5]{Liu08}, except for effective Kisin modules only and for the contravariant functor $\hom_{\Es, \varphi}(-, \Es^{\mathrm{ur}}).$ Lemma~\ref{lem:KisinDualT} then implies the full faithfulness of the covariant functor $T_{\Es}$ (since $T \mapsto T^{\vee}$ is an auto-equivalence of $\Rep_{\ZZ_p, \fr}(G_{\infty})$). As will be discussed in \S~\ref{subsec:KisinToBreuil}, there is a natural notion of Breuil--Kisin twist $\Em\mapsto \Em\{s\}$ for $s \in \mathbb{Z}$, such that $T_{\Es}(\Em\{s\})\simeq T_{\Es}(\Em)\otimes \ZZ_p(s).$ Since for a pair of free Kisin modules, $\Em_1, \Em_2,$ one can always find $s$ big enough such that $\Em_1\{-s\}, \Em_2\{-s\}$ are both effective, the result extends from $\Mod_{\Es, \fr}^{\varphi, \geq 0}$ to all of $\Mod_{\Es, \fr}^{\varphi}$. 
\end{proof}

The following proposition and lemma will be important for our computations related to exactness properties of the Kisin functor later on.

\begin{proposition}\label{prop:KerOfTisUTorsion}
For every $\Em \in \Mod_{\Es}^{\varphi},$ $T_{\Es}(\Em)=0$ if and only if $\Em$ is $u^{\infty}$-torsion, i.e., $u^N \Em=0$ for big enough $N$.
\end{proposition}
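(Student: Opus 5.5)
Since $T_{\Es}=T_{\Phi}\circ\Phi$ and $T_{\Phi}$ is an equivalence onto $\mod_{\ZZ_p}$, we have $T_{\Es}(\Em)=0$ if and only if $\Phi(\Em)=\Em\otimes_{\Es}W(\CC_K^{\flat})=0$. The proof therefore reduces to a statement about base change of finitely generated $\Es$-modules. First I factor $\Es\to W(\CC_K^{\flat})$ as $\Es\to\oh_{\mathcal{E}}\to W(\CC_K^{\flat})$. As recalled above, the second map is faithfully flat. Hence $\Phi(\Em)=0$ if and only if $\Em\otimes_{\Es}\oh_{\mathcal{E}}=0$.

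Next I identify $\Em\otimes_{\Es}\oh_{\mathcal{E}}$ with the $p$-adic completion of $\Em[1/u]$. This holds because $\Em$ is finitely generated and $\oh_{\mathcal{E}}$ is the $p$-adic completion of the noetherian ring $\Es[1/u]$; for finitely generated modules over a noetherian ring, completion agrees with tensoring by the completed ring. So the condition is that the $p$-adic completion of the finitely generated $\Es[1/u]$-module $\Em[1/u]$ vanishes. Over the noetherian ring $\Es[1/u]$, Krull's intersection theorem (or Nakayama after completion) shows this happens if and only if $\Em[1/u]=p\Em[1/u]$. Equivalently, $\Em[1/u]$ is supported away from $V(p)$, i.e.\ $(\Em/p\Em)[1/u]=0$.

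For the easy direction: if $u^N\Em=0$ then $\Em[1/u]=0$, and everything vanishes. For the converse, I must show that the condition $(\Em/p\Em)[1/u]=0$, together with the Frobenius structure, forces $\Em[1/u]=0$. This is where $\varphi_{\Em,\lin}$ enters. Let $\mathfrak{N}=\Em[1/u]$, a finitely generated module over $\Es[1/u]$. Since $E(u)\equiv u^e$ modulo $p$, the element $E(u)$ is a unit in $\Es[1/u]$ up to something $p$-adically small, so I need a cleaner argument. The Frobenius gives an isomorphism $\varphi^*\mathfrak{N}[1/E]\simeq\mathfrak{N}[1/E]$. Localizing at a minimal prime $\mathfrak{q}$ of the support of $\mathfrak{N}$ (working in $\Es[1/u,1/E]$), the support $Z$ of $\mathfrak{N}[1/E]$ satisfies $\varphi^{-1}(Z)=Z$.

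The prime ideals of $\Es[1/u]$ not containing $p$ correspond to closed points of the rigid open punctured disc, plus the generic point. A nonempty $\varphi$-stable closed subset avoiding $V(p)$ would consist of finitely many points stable under $\varphi^{-1}$, i.e.\ under $u\mapsto u^p$ on points $x\mapsto x^{1/p}$. Such points must have $|x|=1$ or $x=0$, which is impossible in the open punctured disc. This rules out finite-length support. The generic point is excluded by the condition $\mathfrak{N}=p\mathfrak{N}$, since at the generic point $p$ is not invertible-forcing; more precisely, the generic point lies in $V(p)$'s closure argument fails, so I must instead use that $\mathfrak{N}=p\mathfrak{N}$ forces $\mathfrak{N}$ to be torsion. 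One must also handle points where $E$ vanishes (here $\varphi$-compatibility is lost), but those form a single orbit and can be absorbed by a similar orbit argument.

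The main obstacle is this step, namely the $\varphi$-stable-support argument. A possibly cleaner route is to reduce to $\Em/u^\infty$-torsion being $u$-torsion-free and to use the classification of finitely generated $u$-torsion-free Kisin modules: such a module injects into its reflexive hull, which is free with $T_{\Es}$ of rank equal to the generic rank, and is therefore nonzero. I would try this second route first.
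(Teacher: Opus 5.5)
Your reduction (via faithful flatness of $\oh_{\mathcal{E}}\to W(\CC_K^{\flat})$ to the vanishing of $\Em[1/u]^{\wedge}$, equivalently $\Em[1/u]=p\,\Em[1/u]$) and the easy direction are exactly the paper's. The gap is the converse, which is the whole content of the proposition. The route you say you would try first does not supply it.

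A $u$-torsion-free Kisin module need not be $\Es$-torsion-free, and the reflexive hull kills all $\Es$-torsion. For $\Es/p$ with its Frobenius the double dual is $0$, so there is no injection, and that argument only handles the free part. Torsion is exactly where the Frobenius is indispensable: the plain $\Es$-module $\Es/(u-p)\simeq W$ has $\Em[1/u]=K_0$, whose $p$-adic completion vanishes, yet it is not $u^{\infty}$-torsion. What is missing is the fact that, modulo something killed by a power of $(p,u)$, the torsion of a Kisin module is killed by a power of $p$. The paper takes this from \cite[Proposition~4.3]{BMS1}. The exact sequence $0\to\Em_{\mathrm{tor}}\to\Em\to\Em_{\fr}\to\overline{\Em}\to 0$ gives $\Em[1/u]\simeq\Em_{\mathrm{tor}}[1/u]\oplus\Em_{\fr}[1/u]$, hence $\Em[1/u]^{\wedge}\simeq\Em_{\mathrm{tor}}[1/u]\oplus\Em_{\fr}[1/u]^{\wedge}$. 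Vanishing of this forces both summands, and so $\Em[1/u]$, to vanish.

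Your first route can be made to work, but the step you dismiss (``absorbed by a similar orbit argument'') is the real content, since $\varphi$-stability only holds away from the point $x_E=(E(u))$. Here is one way to finish.
\begin{itemize}
\item If $\Em[1/u]=p\,\Em[1/u]$, its support $Z\subseteq\operatorname{Spec}\Es[1/u]$ is closed and misses $(p)$, hence misses the generic point. So $Z$ is a finite set of closed points $(P)$, with $P$ irreducible distinguished, each having a well-defined absolute value $|P|\in(0,1)$ of its roots. In particular $|x_E|=p^{-1/e}$.
\item The map $f=\operatorname{Spec}\varphi$ is surjective, because $\Es$ is finite free over $\varphi(\Es)$, and it satisfies $|f(y)|=|y|^{p}$.
\item Comparing supports of $(\varphi^*\Em)[1/(uE(u))]$ and $\Em[1/(uE(u))]$ gives $f^{-1}(Z)\setminus\{x_E\}=Z\setminus\{x_E\}$.
\item Suppose $Z\neq\emptyset$. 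A point of $Z$ of minimal absolute value must be $x_E$, since otherwise its image under $f$ is a strictly smaller point of $Z$.
\item Every preimage of a point of $Z$ of maximal absolute value must be $x_E$, since otherwise it is a strictly larger point of $Z$.
\item Hence $\rho_{\max}=p^{-p/e}<p^{-1/e}=\rho_{\min}$, a contradiction.
\end{itemize}
Completed this way, your approach is a self-contained alternative to citing \cite{BMS1}. The paper's route is shorter, and the same structure result is reused in Lemma~\ref{lem:UtorIsPtor}.
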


\begin{proof}
Since the map $\oh_{\mathcal{E}} \rightarrow W(\CC_K^{\flat})$ is faithful flat (see \S\ref{faithful_flat OhE_to_WWCC}),
a Kisin module $\Em$ satisfies $T_{\Es}(\Em)=0$ if and only if $\Em\otimes_{\Es} \oh_{\mathcal{E}}=0$;
that is, if and only if the $p$-adic completion $\Em[1/u]^{\wedge}$ of $\Em[1/u]$ is $0$. This clearly happens when $u^N \Em=0$ for some $N$. To prove the converse, fix a Kisin module $\Em$. By \cite[Proposition 4.3]{BMS1}, there is an exact sequence of Kisin modules 
\begin{center}
\begin{tikzcd}
0 \ar[r] & \Em_{\mathrm{tor}} \ar[r] & \Em \ar[r] & \Em_{\fr} \ar[r] & \overline{\Em} \ar[r] & 0
\end{tikzcd}
\end{center}
where the module $\Em_{\fr}$ is free, $\Em_{\mathrm{tor}}$ is annihilated by a power of $p$, and the module $\overline{\Em}$ is annihilated by a power of $(p, u)$. In particular, $\overline{\Em}$ vanishes after inverting $u$, and the sequence 
\begin{center}
\begin{tikzcd}
0 \ar[r] & \Em_{\mathrm{tor}}[1/u] \ar[r] & \Em[1/u] \ar[r] & \Em_{\fr}[1/u] \ar[r] & 0
\end{tikzcd}
\end{center}
is therefore exact. $\Em_{\fr}[1/u]$ is a free $\Es[1/u]$-module and we therefore have $\Em[1/u]\simeq \Em_{\mathrm{tor}}[1/u] \oplus \Em_{\fr}[1/u]$ as $\Es[1/u]$-modules. Consequently, we have  $\Em[1/u]^{\wedge}\simeq \Em_{\mathrm{tor}}[1/u] \oplus \Em_{\fr}[1/u]^{\wedge}$ (the first summand is $p^k$-torsion for some $k$, hence $p$-complete).

Now, if $\Em[1/u]^{\wedge}=0,$ then we have both $\Em_{\mathrm{tor}}[1/u]=0$ and $\Em_{\fr}[1/u]^{\wedge}=0$, hence also $\Em_{\fr}[1/u]=0$ (since $\Em_{\fr}[1/u]$ is a free $\Es[1/u]$-module). Consequently, we have $\Em[1/u]=0$ and thus, $\Em$ is annihilated by $u^N$ for big enough $N$.
\end{proof}

\begin{lemma}\label{lem:UtorIsPtor}
Let $\Em$ be a Kisin module. If $u^N \Em=0$ for some $N \in \NN$, then $\Em$ is annihilated by a power of $(p, u)$.  
\end{lemma}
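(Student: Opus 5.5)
We need to show that a Kisin module $\Em$ killed by $u^N$ is killed by $(p,u)^M$ for some $M$. Since $u^N$ already lies in $(p,u)^N$, it suffices to show $p^k\Em=0$ for some $k$; then $(p,u)^{N+k}$ kills $\Em$.

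The key input is the Frobenius structure. Since $\Em$ is $u^N$-torsion, it is a finitely generated module over $\Es/u^N=W[u]/(u^N)$. Over this ring $E(u)$ is a unit: $E(u)=u^e+p(\text{stuff})$ with constant term $p\cdot(\text{unit})$, so $E(u)\equiv p\cdot c$ modulo $u$ for a unit $c$. Hmm, that is not a unit. I would instead use the fact that $\varphi_{\Em,\lin}$ is an isomorphism after inverting $E(u)$. In $\Es[1/E(u)]$, inverting $E(u)$ on a $u$-power-torsion module amounts to inverting $p$, because $E(u)^{?}$ and $p$ generate the same radical modulo $u^N$. Indeed, $E(u)\equiv p c\pmod u$, so in $\Es/u^N$ we get $E(u)=pc+u(\cdots)$. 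Since $u$ is nilpotent there, $E(u)$ and $p$ differ multiplicatively by a unit up to nilpotent corrections, and hence $\Em[1/E(u)]=\Em[1/p]$.

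So consider $\Em[1/p]$, a finitely generated module over $(W[u]/u^N)[1/p]=K_0[u]/u^N$, together with an isomorphism $\varphi^*\Em[1/p]\simeq\Em[1/p]$. The plan is to show this forces $\Em[1/p]=0$. Write $\Em'=\Em[1/p]$, and let $m\le N$ be the least integer with $u^m\Em'=0$, assuming $\Em'\neq0$ so that $m\geq1$. The base change $\varphi^*\Em'=\Es\otimes_{\varphi,\Es}\Em'$ is killed by $\varphi(u)^{m}=u^{pm}$. More precisely, $u^{p(m-1)}$ does not kill $\varphi^*\Em'$. Hmm, I need the reverse comparison. Better: $\varphi^*\Em'$ is annihilated exactly by $u^{pm}$ and not by $u^{p(m-1)+?}$. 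The cleanest route is via length. Since $\varphi:K_0[u]/u^{N}\to K_0[u]/u^{pN}$ sends $u\mapsto u^p$ and is flat (free of rank $p$ over the source), $\varphi^*\Em'$ has $K_0$-length $p\cdot\mathrm{length}(\Em')$. Hence $\varphi^*\Em'\simeq\Em'$ forces $\mathrm{length}(\Em')=0$. Here I am treating $\varphi^*$ on $\Es$-modules that are $u^N$-torsion: $\varphi^*\Em'=\Em'\otimes_{K_0[u]/u^N,\varphi}K_0[u]/u^{pN}$, using that $\Es\otimes_{\varphi,\Es}(\Es/u^N)=\Es/u^{pN}$, and Frobenius on $K_0$ is bijective. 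Lengths over $K_0$ are finite because $\Em'$ is finitely generated over a finite-dimensional $K_0$-algebra.

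Therefore $\Em[1/p]=0$. Since $\Em$ is finitely generated over the noetherian ring $\Es$, this gives $p^k\Em=0$ for some $k$, and we are done. The main point to check carefully is the identification $\Em[1/E(u)]=\Em[1/p]$ (and likewise for $\varphi^*\Em$, which is $u^{pN}$-torsion). This holds because in $\Es/u^{M}$ one has $E(u)=p\cdot c+u f$ with $c\in W^\times$ and $u$ nilpotent. Hence $E(u)^{M}\in p\cdot(\Es/u^M)$, and conversely $p^{M}\in E(u)\cdot(\Es/u^M)$ up to units. So the two localizations coincide.
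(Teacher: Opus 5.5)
Your argument is correct, but it takes a different route from the paper. The paper derives the lemma in two lines from the canonical exact sequence $0\to\Em_{\mathrm{tor}}\to\Em\to\Em_{\fr}\to\overline{\Em}\to 0$ of \cite[Proposition~4.3]{BMS1}. If $u^N\Em=0$, the map to the free module $\Em_{\fr}$ must vanish, so $\Em\simeq\Em_{\mathrm{tor}}$, which is killed by a power of $p$.

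You instead prove the $p$-power torsion directly, in three steps.
\begin{enumerate}
\item Modulo $u^M$ one has $E(u)^M\in p\,\Es/u^M$ and $p^M\in E(u)\,\Es/u^M$. So for the $u^{pN}$-torsion modules $\Em$ and $\varphi^*\Em$, inverting $E(u)$ is the same as inverting $p$.
\item Then $\varphi_{\Em,\lin}$ becomes a $K_0$-linear isomorphism $\varphi^*(\Em[1/p])\simeq\Em[1/p]$ of finite-dimensional $K_0$-vector spaces.
\item On the other hand, $\varphi^*$ multiplies $K_0$-dimension by $p$. This holds because $K_0[u]/u^{pN}$ is free of rank $p$ over $K_0[u]/u^N$ via $u\mapsto u^p$, and Frobenius is bijective on $K_0$. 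Hence $\Em[1/p]=0$, and finite generation gives $p^k\Em=0$.
\end{enumerate}

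Your route is self-contained and shows exactly where the Frobenius structure is needed. Without it the statement fails: $\Es/u\Es$ is $u$-torsion but not killed by any power of $p$, and your dimension count shows it admits no Kisin structure. The paper's route is shorter in context, since the BMS sequence is already used in Proposition~\ref{prop:KerOfTisUTorsion}.

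For a final write-up, delete the abandoned detours, since they play no role: the suggestion that $E(u)$ is a unit modulo $u^N$, and the minimal exponent $m$ with the remark about $u^{p(m-1)}$.
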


\begin{proof}
This is, again, a consequence of the canonical exact sequence 
\begin{center}
\begin{tikzcd}
0 \ar[r] & \Em_{\mathrm{tor}} \ar[r] & \Em \ar[r] & \Em_{\fr} \ar[r] & \overline{\Em} \ar[r] & 0
\end{tikzcd}
\end{center}
from \cite[Proposition 4.3]{BMS1}. If $u^N\Em=0$, then the map $\Em\rightarrow \Em_{\fr}$ is zero, and therefore $\Em \simeq \Em_{\mathrm{tor}}$. Since the latter is annihilated by a power of $p$, the claim follows.
\end{proof}

Full faithfulness of $T_{\Es}$ allows us to consider the following.

\begin{definition}
Denote by $\Rep_{\ZZ_p, \fr}^{\BK}(G_\infty)$ the essential image of the functor $T_{\Es}$ restricted to $\Mod_{\Es, \fr}^{\varphi}$. Since $T_{\Es}$ is fully faithful, it admits a quasi-inverse $\underline{\Em}: \Rep_{\ZZ_p, \fr}^{\BK}(G_\infty) \rightarrow \Mod_{\Es, \fr}^{\varphi}$. We call $\underline{\Em}$ the \textit{Kisin functor}.
\end{definition}
  
For an integer $r \geq 0$, denote by $\Rep_{\ZZ_p, \cris}^{[-r, 0]}(G_K)$ ($\Rep_{\ZZ_p,  \st}^{[-r, 0]}(G_K),$ resp.) the category 
of finite free $\ZZ_p$-modules $T$ endowed with a continuous $G_K$-action such that $T[1/p]$ is a crystalline (semistable, resp.) representation of $G_K$ with Hodge-Tate weights in $[-r, 0]$. The next proposition, stating various properties of $\underline{\Em}$, summarizes some of the main results of \cite{Kisin1}. 

\begin{proposition}\label{prop:KisinBasicProperties}
\begin{enumerate}
\item The restriction functor $\Rep_{\ZZ_p,  \cris}^{[-r, 0]}(G_K) \rightarrow \Rep_{\ZZ_p, \fr}(G_{\infty})$ given by $ \; T \mapsto T|_{G_{\infty}}$, is fully faithful.
\item The restriction functor $\Rep_{\ZZ_p, \st}^{[-r, 0]}(G_K) \rightarrow \Rep_{\ZZ_p, \fr}(G_{\infty})$ lands in $\Rep_{\ZZ_p, \fr}^{\BK}(G_{\infty}).$ Consequently, it makes sense to consider the composite functor (denoted again by $\underline{\Em}$ by abusing notation) $\Rep_{\ZZ_p, \st}^{[-r, 0]}(G_K) \stackrel{\mathrm{res}}\rightarrow \Rep_{\ZZ_p, \fr}^{\BK}(G_{\infty})\stackrel{\underline{\Em}}\rightarrow \Mod_{\Es, \fr}^{\varphi}$. 
\item The functor $\underline{\Em}: \Rep_{\ZZ_p, \st}^{[-r, 0]}(G_K) \rightarrow \Mod_{\Es, \fr}^{\varphi}$ takes values in $\Mod_{\Es, \fr}^{\varphi, \leq r}$. Consequently, there is a fully faithful functor $\underline{\Em}:\Rep_{\ZZ_p, \cris}^{[-r, 0]}(G_K) \rightarrow \Mod_{\Es, \fr}^{\varphi, \leq r}$ characterized by the property that for all $T\in \Rep_{\ZZ_p, \cris}^{[-r,0]}(G_K),$ one has $$T_{\Es}(\underline{\Em}(T))=T|_{G_{\infty}} \text{ in }\Rep_{\ZZ_p, \fr}(G_{\infty}), \text{ functorially in }T.$$
\item{When $r=1$, the functor $\underline{\Em}: \Rep_{\ZZ_p, \cris}^{[-1,0]}(G_K) \rightarrow \Mod_{\Es, \fr}^{\varphi, \leq 1}$ from part (2) is an equivalence of categories, with the quasi-inverse given by $T_{\Es}$ or, more precisely, by the functor $$\Em \mapsto \text{the unique }T \in \Rep_{\ZZ_p, \cris}^{[-1, 0]}(G_K)\text{ with }T|_{G_{\infty}}=T_{\Es}(\Em).$$}
\end{enumerate}
\end{proposition}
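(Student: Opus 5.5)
This proposition collects results of Kisin \cite{Kisin1}, so the plan is to reduce each part to his theorems. The main work is translating his contravariant normalization into the covariant functor $T_{\Es}$, using Lemma~\ref{lem:KisinDualT} and Proposition~\ref{prop:PropertiesOfT}.

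\emph{Parts (1) and (2).} For (1) I would cite Kisin's full faithfulness theorem for restriction from crystalline $G_K$-lattices to $G_\infty$-lattices. Kisin proves it for all crystalline lattices, and the bound on Hodge--Tate weights plays no role. Since $T\mapsto T^\vee$ preserves crystallinity and commutes with restriction, covariance or contravariance does not matter here. For (2), Kisin attaches to a semistable lattice $T'$ with weights in $[0,r]$ (in his convention) a free Kisin module $\Em'$ of height $\leq r$ with $\hom_{\Es,\varphi}(\Em',\Es^{\mathrm{ur}})\simeq T'|_{G_\infty}$.
\begin{itemize}
\item Apply this to $T'=T^\vee$; this is where the sign convention $[-r,0]$ versus $[0,r]$ enters.
\item Lemma~\ref{lem:KisinDualT} gives $T_{\Es}(\Em')^\vee\simeq T^\vee|_{G_\infty}$.
\item Dualizing, $T_{\Es}(\Em')\simeq T|_{G_\infty}$, so $T|_{G_\infty}$ lies in the essential image $\Rep^{\BK}_{\ZZ_p,\fr}(G_\infty)$.
\end{itemize}

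\emph{Part (3).} By full faithfulness of $T_{\Es}$ (Proposition~\ref{prop:PropertiesOfT}(3)), $\underline{\Em}(T)\simeq\Em'$, which has height $\leq r$. Full faithfulness of $\underline{\Em}$ on crystalline lattices follows by composing three fully faithful functors: restriction (part (1)), $\underline{\Em}$, and its quasi-inverse $T_{\Es}$. The characterizing identity $T_{\Es}(\underline{\Em}(T))=T|_{G_\infty}$ is the definition of $\underline{\Em}$ as quasi-inverse. The step I expect to need the most care is checking that the height bound really transfers under dualization. I would handle this with Kisin's own statement matching height $\leq r$ to weights in $\{0,\dots,r\}$, rechecked against the covariant convention using the Breuil--Kisin twist mentioned in the proof of Proposition~\ref{prop:PropertiesOfT}.

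\emph{Part (4).} This is Kisin's theorem (for $p>2$, and Kim, Liu, Lau for $p=2$) that every free Kisin module of height $\leq 1$ arises from a crystalline lattice with weights in $\{0,1\}$, via Barsotti--Tate groups. Given $\Em$ of height $\leq 1$, one gets a crystalline $T$ with $\underline{\Em}(T)\simeq\Em$, so $\underline{\Em}$ is essentially surjective. Together with part (3) it is an equivalence. Uniqueness of $T$ with $T|_{G_\infty}=T_{\Es}(\Em)$ follows from part (1), and this identifies the quasi-inverse as stated.
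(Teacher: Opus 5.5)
Your proposal is correct and follows the paper's proof. Each part is reduced to Kisin's results (with the $p=2$ case of part (4) from the later literature), and the contravariant normalization is handled by passing to $T^\vee$, which is exactly what the paper means by precomposing with the duality functor. The height transfer you flag as delicate is automatic: by Lemma~\ref{lem:KisinDualT} and full faithfulness of $T_{\Es}$, the module $\Em'$ that Kisin attaches to $T^\vee$ is itself $\underline{\Em}(T)$, so no Kisin module is ever dualized and its height bound carries over unchanged.
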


\begin{proof}
Part (1) is Theorem~0.2 of \cite{Kisin1}. Parts (2), (3) also follow from \cite{Kisin1}, especially Theorem~0.1, except 
that the \'{e}tale realization functors (hence, also the corresponding functors $\underline{\Em}$) in \textit{loc. cit.} are contravariant. The result is then obtained by precomposing with the duality functor of $\ZZ_p[G_K]$-representations (a more direct proof of the full faithfulness of $\underline{\Em}$ on crystalline lattices in terms of covariant functors is given in \cite[Theorems 1.2, 1.3]{BhattScholze2}). 
Part (4) is a consequence of a similar equivalence between height $\leq 1$ Kisin modules and $p$-divisibe groups; this has been established as Theorem~0.4 of \cite{Kisin1} for $p>2$, and in \cite{LiuBT} for $p=2$.
\end{proof}

To gain compatibility with Breuil modules and their covariant \'{e}tale realization functor, we consider the following twisted variant.

\begin{definition}\label{def:TwistedKisinFunctors}
Fix an integer $r\in \ZZ$.

The \textit{twisted \'{e}tale realization functor} $T_{\Es}^{(r)}: \Mod_{\Es}^{\varphi} \rightarrow \Rep_{\ZZ_p}(G_{\infty})$ is defined as $$T_{\Es}^{(r)}(\Em)=T_{\Es}(\Em)(r), \;\;\Em \in \Mod_{\Es}^{\varphi}.$$
Here $(r)$ denotes the Tate twist $-\otimes_{\ZZ_p}(\ZZ_p(r)|_{G_\infty})$ on $\ZZ_p[G_{\infty}]$-modules.

The \textit{twisted Kisin functor} $\underline{\Em}^{(r)}: \Rep_{\ZZ_p, \fr}^{\BK}(G_{\infty})\rightarrow \Mod_{\Es, \fr}^{\varphi}$ is defined as
$$\underline{\Em}^{(r)}(T)=\underline{\Em}(T(-r)), \;\; T \in \Rep_{\ZZ_p, \fr}^{\BK}(G_{\infty}).$$
Here $(-r)$ stands, again, for the usual Tate twist.

Just like for $\underline{\Em},$ the functors obtained from $\underline{\Em}^{(r)}$ by precomposition with the restriction functor $\Rep_{\ZZ_p, \st}^{[-r, 0]}(G_K)\rightarrow \Rep_{\ZZ_p, \fr}^{\BK}(G_{\infty}) $ or $\Rep_{\ZZ_p,  \cris}^{[-r, 0]}(G_K) \rightarrow \Rep_{\ZZ_p, \fr}^{\BK}(G_{\infty})$ are also denoted by $\underline{\Em}^{(r)}$.
\end{definition}

As a direct consequence of the properties of $T_{\Es}$ and $\underline{\Em}$, we obtain:

\begin{proposition}\label{prop:PropertiesOfTrMr}
\begin{enumerate}
\item The functor $T_{\Es}^{(r)}$ is exact.
\item $T_{\Es}^{(r)}$ restricts to a functor $T_{\Es}^{(r)}: \Mod_{\Es, \fr}^{\varphi} \rightarrow \Rep_{\ZZ_p, \fr}(G_{\infty})$, where $\Rep_{\ZZ_p, \fr}(G_{\infty})$ is the full subcategory of $\Rep_{\ZZ_p}(G_{\infty})$ consisting of all objects whose underlying $\ZZ_p$-module is finite free.
\item For every $\Em \in \Mod_{\Es}^{\varphi},$ we have $T_{\Es}^{(r)}(\Em)=0$ if and only if $\Em$ is $u^{\infty}$-torsion, i.e., $u^N \Em=0$ for big enough $N$.
\item Assume $r \geq 0$. The restriction functor $\Rep_{\ZZ_p,  \cris}^{[0, r]}(G_K) \rightarrow \Rep_{\ZZ_p, \fr}(G_{\infty})$ is fully faithful.
\item Assume $r \geq 0$. The functor $\underline{\Em}^{(r)}:\Rep_{\ZZ_p, \cris}^{[0, r]}(G_K) \rightarrow \Mod_{\Es, \fr}^{\varphi, \leq r}$ is fully faithful, and it is characterized by the property that for all $T\in \Rep_{\ZZ_p, \cris}^{[0,r]}(G_K),$ one has $$T^{(r)}_{\Es}(\underline{\Em}^{(r)}(T))=T|_{G_{\infty}} \text{ in }\Rep_{\ZZ_p, \fr}(G_{\infty}), \text{ functorially in }T.$$
\item{The functor $\underline{\Em}^{(1)}: \Rep_{\ZZ_p, \cris}^{[0,1]}(G_K) \rightarrow \Mod_{\Es, \fr}^{\varphi, \leq 1}$ is an equivalence of categories, with the quasi-inverse given by $T^{(1)}_{\Es}$ or, more precisely, by the functor $$\Em \mapsto \text{the unique }T \in \Rep_{\ZZ_p, \cris}^{[0, 1]}(G_K)\text{ with }T|_{G_{\infty}}=T^{(1)}_{\Es}(\Em).$$}
\end{enumerate}
\end{proposition}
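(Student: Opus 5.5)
Each part comes from the corresponding property of $T_{\Es}$ and $\underline{\Em}$, combined with the fact that the Tate twist $-(r)$ is an exact autoequivalence of $\Rep_{\ZZ_p}(G_\infty)$ that preserves $\Rep_{\ZZ_p,\fr}(G_\infty)$. The twist is $\otimes_{\ZZ_p}$ with a free rank-one module, with inverse $-(-r)$.

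\emph{Parts (1)--(3).} $T^{(r)}_{\Es}$ is the composite of $T_{\Es}$ with the twist. Exactness therefore follows from Proposition~\ref{prop:PropertiesOfT}(1), and preservation of freeness from Proposition~\ref{prop:PropertiesOfT}(3). For (3), $T_{\Es}(\Em)(r)=0$ if and only if $T_{\Es}(\Em)=0$, since the twist is an equivalence; by Proposition~\ref{prop:KerOfTisUTorsion} this happens if and only if $\Em$ is $u^\infty$-torsion.

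\emph{Part (4).} Twisting by $(-r)$ is an equivalence $\Rep_{\ZZ_p,\cris}^{[0,r]}(G_K)\to\Rep_{\ZZ_p,\cris}^{[-r,0]}(G_K)$. Crystallinity is preserved because $\QQ_p(-r)$ is crystalline, and the Hodge--Tate weights shift by $-r$ under the convention that the cyclotomic character has weight $1$. On the $G_\infty$ side the same twist is an autoequivalence. Restriction to $G_\infty$ commutes with the twist, so the restriction functor on $[0,r]$ is the conjugate of the one on $[-r,0]$ by these equivalences. The latter is fully faithful by Proposition~\ref{prop:KisinBasicProperties}(1), and hence so is the former.

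\emph{Parts (5) and (6).} For $T\in\Rep^{[0,r]}_{\ZZ_p,\cris}(G_K)$ we have $T(-r)\in\Rep^{[-r,0]}_{\ZZ_p,\cris}(G_K)$, so Proposition~\ref{prop:KisinBasicProperties}(3) gives $\underline{\Em}^{(r)}(T)=\underline{\Em}(T(-r))\in\Mod^{\varphi,\leq r}_{\Es,\fr}$. The same proposition gives
$$T^{(r)}_{\Es}(\underline{\Em}^{(r)}(T))=T_{\Es}(\underline{\Em}(T(-r)))(r)=T(-r)|_{G_\infty}(r)=T|_{G_\infty},$$
functorially in $T$. Full faithfulness follows because $\underline{\Em}^{(r)}$ is the composite of the twist equivalence with the fully faithful $\underline{\Em}$. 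Alternatively, it follows from this identity together with (4) and the full faithfulness of $T_{\Es}$ on free modules (Proposition~\ref{prop:PropertiesOfT}(3)). The identity also characterizes $\underline{\Em}^{(r)}$ up to unique isomorphism, again because $T_{\Es}$ is fully faithful on free modules.

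For (6), take $r=1$. Then $\underline{\Em}^{(1)}$ is the composite of the equivalence $-(-1):\Rep^{[0,1]}_{\ZZ_p,\cris}(G_K)\to\Rep^{[-1,0]}_{\ZZ_p,\cris}(G_K)$ with the equivalence of Proposition~\ref{prop:KisinBasicProperties}(4), so it is an equivalence. Its quasi-inverse sends $\Em$ to $T'(1)$, where $T'$ is the unique object of $\Rep^{[-1,0]}_{\ZZ_p,\cris}(G_K)$ with $T'|_{G_\infty}=T_{\Es}(\Em)$. Then $T'(1)$ is the unique lattice with weights in $[0,1]$ whose restriction is $T^{(1)}_{\Es}(\Em)$; uniqueness holds by (4).

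There is no real obstacle in this argument. The only point needing care is the sign bookkeeping: checking that the twist $(-r)$ really moves the weight range $[0,r]$ to $[-r,0]$ under the paper's convention, and that restriction to $G_\infty$ commutes with the twists.
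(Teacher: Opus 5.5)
Your proposal is correct and matches the paper. The paper gives no separate proof: it presents this proposition as a direct consequence of Propositions~\ref{prop:PropertiesOfT}, \ref{prop:KerOfTisUTorsion} and~\ref{prop:KisinBasicProperties}, transported through the Tate twist, and that is exactly your argument. Your bookkeeping of the weight shift $[0,r]\mapsto[-r,0]$ and of how twisting commutes with restriction to $G_\infty$ fills in the details the paper leaves implicit.
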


To close this section, we mention explicitly one of the variants of Kisin modules that enhances the pair of functors $\underline{\Em}, T_{\Es}$ to an equivalence of categories with the category of lattices in $G_K$-representations. Namely,  we consider the theory of Kisin $G_K$-modules of Gao \cite{GaoBKGK} (these are called \textit{Breuil--Kisin} $G_K$-modules in \textit{op.cit.}, but in keeping with our convention to separate Breuil--Kisin and Breuil modules clearly, we shorten the name just like we do for general Kisin modules). 
 
\begin{definition}[\cite{GaoBKGK}]
A \textit{(free) Kisin $G_K$-module of height $\leq r$} consists of a free Kisin module $\Em$ of height $\leq r$ together with the datum of a continuous, semilinear $G_K$-action on $\Em_{\inf}=\Em \otimes_{\Es}\ainf$ compatible with $ \varphi_{\Em}$,
such that 
\begin{enumerate}
\item $\Em \subseteq \Em_{\inf}^{G_{\infty}}$,
\item $\Em/u\Em \subseteq (\Em_{\inf}/W(\mathfrak{m})\Em_{\inf})^{G_K}$ via the natural map.
\end{enumerate} 
Denote the category of all free Kisin $G_K$-modules of height $\leq r$ by $\Mod_{\Es, \fr}^{\varphi, G_K, \leq r}$.
\end{definition} 
In the above definition, $W(\mathfrak{m})$ is the kernel of the map $\ainf=W(\oh_{\CC_K}^{\flat})\rightarrow W(\oh_{\CC_K}^{\flat}/\mathfrak{m}),$ where $\mathfrak{m}$ is the maximal ideal of $\oh_{\CC_K}^{\flat}$.
Given a free Kisin $G_K$-module $\Em,$ the isomorphism
$$T_{\Es}(\Em)=(\Em\otimes_{\Es}W(\CC_K^{\flat}))^{\varphi=1}\simeq (\Em_{\inf}\otimes_{\ainf}W(\CC_K^{\flat}))^{\varphi=1}$$
endows $T_{\Es}(\Em)$ with a structure of a $G_K$-module (rather than just $G_{\infty}$-module). Denote the resulting functor by $T_{\Es, G_K}.$

\begin{theorem}[{\cite[Theorem~7.1.7, Proposition~7.1.10]{GaoBKGK}}]\label{thm:BKGKCorrespondence}
\begin{enumerate}
\item The functor $T_{\Es, G_K}$ is an equivalence of categories $T_{\Es, G_K}: \Mod_{\Es, \fr}^{\varphi, G_K, \leq r} \stackrel{\sim}\rightarrow \Rep_{\ZZ_p,\st}^{[-r, 0]}(G_K). $
\item There is an ideal $I \subseteq \ainf$ such that for all $\Em \in \Mod_{\Es, \fr}^{\varphi, G_K, \leq r},$ the representation $T_{\Es, G_K}(\Em)[1/p]$ is crystalline if and only if $$\forall g \in G_K: (g-1)\Em \subseteq I\Em_{\inf}. $$
\end{enumerate}
\end{theorem}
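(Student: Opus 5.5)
The plan is to reduce everything to the theory of $(\varphi,\hat G)$-modules of Liu together with Kisin's results recalled in Proposition~\ref{prop:KisinBasicProperties}. Conditions (1) and (2) in the definition of a Kisin $G_K$-module are designed exactly so that such a module is a $(\varphi,\hat G)$-module. The one difference is that the $\hat G$-action lives on $\Em_{\inf}$ rather than on $\Em\otimes_{\Es}\widehat{\mathcal R}$.

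\textbf{Well-definedness and full faithfulness.} The map $\ainf\to W(\CC_K^\flat)$ is injective and $\Em$ is free, so $\Em_{\inf}$ embeds $\varphi$- and $G_K$-equivariantly into
$$\Em\otimes_{\Es}W(\CC_K^\flat)\simeq T_{\Es}(\Em)\otimes_{\ZZ_p}W(\CC_K^\flat).$$
Hence the $G_K$-action on $\Em_{\inf}$ is determined by the $G_K$-action on $T=T_{\Es,G_K}(\Em)$, and conversely. A morphism of Kisin modules $f$ with $T_{\Es}(f)$ $G_K$-equivariant therefore automatically respects the actions on $\Em_{\inf}$. Combined with the full faithfulness of $T_{\Es}$ (Proposition~\ref{prop:PropertiesOfT}(3)), this gives full faithfulness of $T_{\Es,G_K}$.

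\textbf{Essential surjectivity.} Let $T\in\Rep_{\ZZ_p,\st}^{[-r,0]}(G_K)$ and put $\Em=\underline{\Em}(T)$, which lies in $\Mod_{\Es,\fr}^{\varphi,\leq r}$ by Proposition~\ref{prop:KisinBasicProperties}. We must show that $\Em_{\inf}\subseteq T\otimes W(\CC_K^\flat)$ is $G_K$-stable and satisfies (1) and (2). For this I would invoke Liu's theorem that $\Em\otimes_{\Es}\widehat{\mathcal R}$ is $G_K$-stable, where $\widehat{\mathcal R}\subseteq\ainf$; then $\Em_{\inf}=(\Em\otimes\widehat{\mathcal R})\otimes\ainf$ is stable as well. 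Condition (1) is clear, since $G_\infty$ fixes $\Es$ and $T|_{G_\infty}$ is the étale realization. Condition (2) follows from the fact that modulo $W(\mathfrak m)$ the ring $\widehat{\mathcal R}$ collapses to $W(k)$, on which $\hat G$ acts trivially.

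\textbf{Converse direction (main obstacle).} It remains to show that $T_{\Es,G_K}(\Em)[1/p]$ is semistable for an arbitrary Kisin $G_K$-module $\Em$. I would use conditions (1) and (2) to show that the action of a topological generator $\tau$ of $\Gal(K_{p^\infty,\infty}/K_{p^\infty})$ satisfies $(\tau-1)\Em\subseteq u\,W(\mathfrak m)$-type ideals inside $\Em_{\inf}$. This would let $N=\log\tau/t$ converge on $\Em\otimes B^+_{\st}$-type rings. Together with Frobenius iteration (the finite height of $\Em$), one then constructs a $\varphi$-stable $K_0$-space of $T\otimes B_{\st}$ fixed by $G_K$ of full dimension, namely $\varphi^*\Em/u$ made Galois-invariant. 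Equivalently, one descends the action to $\widehat{\mathcal R}$ and applies Liu's theorem.

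\textbf{Crystalline criterion (part (2)).} I would take $I$ to be the ideal used by Gao, of the form $u\varphi^{-1}(t)W(\mathfrak m)$ or its closure. Then crystallinity is equivalent to $N=0$ on $D_{\st}$, which by the construction above is equivalent to $\tau-1$ having its image divisible into $I\Em_{\inf}$ rather than just $\varphi^{-1}(t)$-divisible. Since $G_\infty$ acts trivially on $\Em$, the condition for all $g$ reduces to the condition for $\tau$.
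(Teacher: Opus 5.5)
The paper gives no proof of this statement; it quotes Gao (Theorem~7.1.7 and Proposition~7.1.10 of \cite{GaoBKGK}). Your sketch therefore has to stand on its own.

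\textbf{What works, with two corrections.} Your full faithfulness argument is fine. Essential surjectivity is essentially right, but it needs two fixes.
\begin{enumerate}
\item Liu's theorem gives $G_K$-stability of $\widehat{\mathcal R}\otimes_{\varphi,\Es}\Em$, i.e.\ of the span $\widehat{\mathcal R}\cdot\varphi(\Em)$ inside $T\otimes_{\ZZ_p}W(\CC_K^\flat)$. It does not give stability of $\Em\otimes_{\Es}\widehat{\mathcal R}$, which need not be stable. To reach $\Em_{\inf}$, use that $\varphi$ is bijective on $\ainf$, preserves $W(\mathfrak m)$ and commutes with $G_K$. Then $\varphi(\Em_{\inf})=\ainf\cdot\varphi(\Em)$, and both stability and condition (2) transfer to $\Em_{\inf}$ via $\varphi^{-1}$.
\item Condition (2) does not follow merely from $\widehat{\mathcal R}/I_+\simeq W(k)$. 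It is the statement that $\hat G$ acts trivially on $\hat{\Em}/I_+\hat{\Em}$, which is part of Liu's theorem.
\end{enumerate}

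\textbf{The genuine gap.} It is the direction you flag yourself: that every Kisin $G_K$-module has semistable realization. Nothing in your sketch makes $\log\tau$ meaningful. Conditions (1), (2) and $\varphi$-compatibility only tell you three things about the matrix $A$ of $\tau$ in a basis of $\Em$: its entries lie in $\ainf$, $A\equiv 1$ modulo $W(\mathfrak m)$, and $A\,\tau(\Phi)=\Phi\,\varphi(A)$, where $\Phi$ is the matrix of $\varphi_{\Em}$. But $\tau^n$ has matrix $A\,\tau(A)\cdots\tau^{n-1}(A)$. So controlling $(\tau-1)^n$ requires controlling $\tau-1$ on the $\ainf$-entries of $A$, and $\tau-1$ is far from topologically nilpotent on $\ainf$ (or on $A_{\cris}$). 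There is no reason for $\sum_n(-1)^{n+1}(\tau-1)^n/n$ to converge, let alone to be divisible by $t$.

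One would first have to show that the entries (for the Frobenius twist $\varphi^*\Em$) lie in a small ring on which $\tau$ acts in a controlled way, such as $\widehat{\mathcal R}$. That descent is the real content of the theorem. In Liu's theory the $\hat G$-action on $\widehat{\mathcal R}\otimes_{\varphi,\Es}\Em$ is built from the monodromy operator of $D_{\st}$. Knowing that the action lives over $\widehat{\mathcal R}$ is therefore tantamount to knowing semistability. Your ``equivalently, descend the action to $\widehat{\mathcal R}$ and apply Liu's theorem'' restates the problem rather than solving it, and Gao needs substantial additional input precisely here.

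\textbf{Part (2).} It inherits this gap and also has content of its own. Crystallinity means $N=0$ on $D_{\st}$. On the $S$- or $\Es$-side this corresponds to $N$ vanishing modulo $u$ (compare the condition $N(M)\subseteq uM$ in Definition~\ref{def:StronglyDivisible}), not to $N=0$. Translating this, in both directions, into a divisibility condition on $(\tau-1)\Em$ with arbitrary $W(\mathfrak m)$-coefficients in $\ainf$ is Gao's crystalline criterion. Your sketch asserts it rather than proves it. Note also that $t\notin\ainf$, so the ideal should be phrased with Liu's element $\mathfrak t\in\ainf$ rather than with $\varphi^{-1}(t)$.
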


Denote by $\Mod_{\Es, \fr, \cris}^{\varphi, G_K, \leq r}$ the full subcategory of $\Mod_{\Es, \fr}^{\varphi, G_K, \leq r}$ consisting of objects satisfying the condition in Theorem~\ref{thm:BKGKCorrespondence} (2). Composing the functor $T_{\Es, G_K}$ with $-\otimes\ZZ_p(r)$ as before, we therefore obtain the equivalences of categories 
$$T_{\Es, G_K}^{(r)}:\Mod_{\Es, \fr}^{\varphi, G_K, \leq r} \stackrel{\sim}\rightarrow \Rep_{\ZZ_p,\st}^{[0, r]}(G_K), \;\;\; T_{\Es, G_K}^{(r)}:\Mod_{\Es, \fr, \cris}^{\varphi, G_K, \leq r} \stackrel{\sim}\rightarrow \Rep_{\ZZ_p,\cris}^{[0, r]}(G_K).$$
In both cases, let us denote the quasi-inverse by $\underline{\Em}_{G_K}^{(r)}$.

\subsection{Breuil modules and functors to $\ZZ_p$-lattices}\label{subsec:BreuilModules}
In this subsection we recall the definitions of the various categories of modules due to C. Breuil. The main references for this section are \cite{Breuil99, Breuil00, Breuil02}.

\subsubsection{The various categories of modules} In what follows we fix an integer $0\leq r<p-1$. Let us denote by $c_1$ the element $\varphi_1(E(u)) \in S^{\times}$. 

\begin{definition}\label{def:ModSPrime}
    We denote by $\ModPrime(S)^{\varphi, N}_{r}$ the category (of all Breuil modules) with objects $4$-uples $(M, \Fil^r M, \varphi_r, N)$, where: 
    \begin{enumerate}
        \item[1.] $M$ is an $S$-module.
        \item[2.]   $\Fil^r M$ is a $S$-submodule of $M$ such that $\Fil^r S\cdot M\subseteq \Fil^r M$.
        \item[3.]$\varphi_r:\Fil^r M\to M$ is a $\varphi$-semilinear map such that  $\varphi_r(sm)=c_1^{-r}\varphi_r(s)\varphi_r(E(u)^r m)$, for every $s\in \Fil^r S$ and all $m\in M$. 
        \item[4.] $N:M\to M$ is a $W$-linear endomorphism satisfying: 
        \begin{enumerate}
            \item[(i)] $N(sm)=N(s)m+sN(m)$, for all $s\in S, m\in M$.
            \item[(ii)] $E(u) N(\Fil^r M)\subset\Fil^r M$. 
            \item[(iii)] The following diagram commutes:  
            \[\begin{tikzcd}
\Fil^r M \ar{r}{\varphi_r} \ar{d}{E(u)N} &  M\ar{d}{c_1N} \\
\Fil^r M\ar{r}{\varphi_r} & M.
\end{tikzcd}
        \]
        \end{enumerate}
          
    \end{enumerate}
    The morphisms in $\ModPrime(S)^{\varphi, N}_{r}$ are $S$-linear maps $f: M_1\to M_2$ such that $f(\Fil^r M_1)\subset \Fil^r M_2$ and $f$ commutes with $\varphi_r$ and $N$. We also consider the category $\ModPrime(S)^{\varphi}_{r}$ which forgets the derivation $N$ in the previous definition.
   \end{definition}

\begin{example}
From \S~\ref{subsec:PeriodRings}, we have $(A_{\cris}, \Fil^r, \varphi_r), (\widehat{A}_{\st}, \Fil^r, \varphi_r) \in \ModPrime(S)^{\varphi}_{r}$ and $(\widehat{A}_{\st}, \Fil^r, \varphi_r, N) \in  \ModPrime(S)^{\varphi, N}_{r}$.
We also consider a torsion version of these objects: denote
$$A_{\cris,\infty}=A_{\cris}\otimes_{\ZZ_p} \QQ_p/\ZZ_p,\;\;\widehat{A}_{\st,\infty}=\widehat{A}_{\st}\otimes_{\ZZ_p} \QQ_p/\ZZ_p. $$
With all the necessary structures induced by those of $A_{\cris}$ and $\widehat{A}_{\st},$ respectively, one has $A_{\cris,\infty} \in \ModPrime(S)^{\varphi}_{r}$ and $\widehat{A}_{\st,\infty} \in \ModPrime(S)^{\varphi, N}_{r}$ .
\end{example}

We recall that $\ModPrime(S)^{\varphi, N}_{r}$ (resp. $\ModPrime(S)^{\varphi}_{r}$) is an exact category. A short sequence \[0\to M'\to M\to M''\to 0\] in $\ModPrime(S)^{\varphi, N}_{r}$ is called \textit{exact} if the induced sequences $0\to \Fil^r M'\to \Fil^r M\to \Fil^r M''\to 0$ and $0\to M'\to M\to M''\to 0$ of $S$-modules are exact.

Next we recall the definition of the category of \textit{torsion Breuil modules}. 

\begin{definition}
    We denote by $\Mod\FI(S)_r^{\varphi, N}$ the full subcategory of $\ModPrime(S)_r^{\varphi, N}$ whose objects satisfy the following additional properties:
    \begin{enumerate}
        \item[1'.] As $S$-module $M$ is isomorphic to $\displaystyle\bigoplus_{i=1}^l \frac{S}{p^{n_i} S}$, where $l\geq 0$ and $n_1, \ldots, n_l$ are positive integers. 
               \item[5.] $\varphi_r(\Fil^r S)$ generates $M$ as $S$-module. 
    \end{enumerate}
\end{definition}

Lastly, we consider the category of \textit{free Breuil modules}, which are known in the literature as \textit{strongly divisible modules} of weight $\leq r$. 

\begin{definition}\label{def:StronglyDivisible}
  (a)  We denote by $\Mod(S)_{r, \log}^{\varphi, N}$ (resp.
  $\Mod(S)_{r}^{\varphi}$)  the full subcategory of $\ModPrime(S)_r^{\varphi, N}$ (resp. of 
  $\ModPrime(S)_{r}^{\varphi}$) whose objects satisfy the extra properties:
    \begin{enumerate}
        \item[1''.]{\label{BreuilItem111} $M$ is a free $S$-module of finite rank. }
        \item[5.]{\label{BreuilItem5} $\varphi_r(\Fil^r S)$ generates $M$ as $S$-module.} 
        \item[6.] $\Fil^r M\cap pM=p\Fil^r M$. Equivalently, the $S$-module $M/\Fil^r M$ has no $p$-torsion (i.e., $\Fil^r M$ is $p$-saturated in $M$). We will refer to the objects of this category as \textit{strongly divisible modules.}
    \end{enumerate}  
    (b) We denote by $\Mod(S)_{r}^{\varphi, N}$ 
    the subcategory of $\Mod(S)_{r, \log}^{\varphi, N}$ 
    with objects whose derivation $N$ satisfies the regularity condition $N(M)\subset uM$. We will refer to the objects of this category as \textit{crystalline strongly divisible modules.} 
\end{definition}

\begin{definition}\label{Def_U}
    We denote by $U: \Mod(S)_{r, \log}^{\varphi, N}\to \Mod(S)_{r}^{\varphi}$ the forgetful functor that forgets the derivation $N$.   
\end{definition}

\begin{remark}
    It follows by the $p$-saturation property that if $(M, \Fil^r M, \varphi_r, N)$ is an object of $\Mod(S)_{r,\log}^{\varphi, N}$, then for every $n\geq 1$ the module $(M_n, \Fil^r M_n, \overline{\varphi}_r, \overline{N})$ is in $\Mod\FI(S)_{r}^{\varphi, N}$, where $M_n=M/p^n M$, $\Fil^r M_n=\Fil^r M/p^n\Fil^r M$ and $\overline{\varphi}_r, \overline{N}$ are induced by $\varphi_r, N$ by going modulo $p$. In what follows we will simply denote by $M$ the objects of $\Mod(S)_{r,\log}^{\varphi, N}$, and by $M_n$ the induced $p^n$-torsion module. 
\end{remark}

\begin{example}\label{ex1} Let $0\leq r<p-1$ be an integer. 
\begin{enumerate}
    \item We will denote by $\mathbf{1}(r)$ the object $(S, \Fil^r(S), \varphi_r, N)$ of $\Mod(S)_r^{\varphi, N}$. 
    \item We will denote simply by $\mathbf{1}$ the object $(S, S, \varphi, N)$ of $\Mod(S)_r^{\varphi, N}$. 
\end{enumerate}
\end{example}

\subsubsection{Functors to Galois representations} 

\begin{definition}
    The \textbf{contravariant} \textit{\'{e}tale realization functor} is the functor
    \begin{eqnarray*}
        &&T_{\st}^\star: \Mod(S)_{r, \log}^{\varphi, N}\to \Rep_{\ZZ_p,  \st}^{[0, r]}(G_K)\\
        && T_{\st}^\star(M)=\Hom_{\ModPrime(S)_{r,\log}^{\varphi, N}}(M, \widehat{A}_{\st}). 
    \end{eqnarray*}
\end{definition}

The following is a theorem of T. Liu, establishing an earlier conjecture of C. Breuil (\cite[Conjecture 2.2.6]{Breuil02}).

\begin{theorem}\label{thm:Liu} (\cite[Theorem 2.3.5]{Liu08}, \cite[Corollary 2.3.5]{Iovita/Marmora})
    For every $0\leq r\leq p-2$ the functor $T_{\st}^\star$ is an  antiequivalence of categories. Moreover, when restricted to the category $\Mod(S)_{r}^{\varphi, N}$ of crystalline strongly divisible modules of weight $\leq r$, it induces an antiequivalence
    \[T_{\st}^\star:\Mod(S)_{r}^{\varphi, N}\xrightarrow{\sim} \Rep_{\ZZ_p, \cris}^{[0, r]}(G_K).\]
\end{theorem}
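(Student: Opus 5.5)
The plan is to treat full faithfulness and essential surjectivity separately, and then handle the crystalline refinement at the end. Throughout I fix $0\leq r\leq p-2$, so that $\varphi_r$ is defined on $\Fil^r S$, $\Fil^r A_{\cris}$ and $\Fil^r\widehat{A}_{\st}$.

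\emph{Full faithfulness.} Here I would reduce to torsion objects. For $M\in\Mod(S)_{r,\log}^{\varphi,N}$, the $p$-saturation condition (6) shows that each $M_n=M/p^nM$ lies in $\Mod\FI(S)_r^{\varphi,N}$. I would then identify
$$T_{\st}^\star(M)=\varprojlim_n \Hom(M_n,\widehat{A}_{\st,\infty}),$$
using that $\widehat{A}_{\st}$ is $p$-adically complete. Breuil's theory for $r<p-1$ gives that the torsion functor $M_n\mapsto \Hom(M_n,\widehat{A}_{\st,\infty})$ is exact and faithful, with $\ZZ_p$-length equal to the $S$-length invariant $\sum n_i$. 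It follows that $T_{\st}^\star(M)$ is free of rank $\operatorname{rk}_S M$. For fullness I would pass to $V=T_{\st}^\star(M)[1/p]$. There $M[1/p]$ recovers the filtered $(\varphi,N)$-module $D=M/(\Fil^1S)M\,[1/p]$, and Colmez--Fontaine shows that $V\mapsto D_{\st}(V)$ is fully faithful on semistable representations. So a map of representations comes from a map $D\to D'$, which extends $S$-linearly to $M[1/p]\to M'[1/p]$. It remains to show that this map preserves the lattices and the filtrations. For that I would use the torsion fullness at each level $n$ together with the identification $M\simeq \Hom_{G_K}(T^\star,\widehat{A}_{\st})$-style recovery via the comparison $\widehat{A}_{\st}\otimes_S M\supseteq \widehat{A}_{\st}\otimes T^{\star\vee}$, whose cokernel is killed by a bounded power of the period $t$.

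\emph{Essential surjectivity.} This is the main obstacle (Breuil's conjecture). Let $T$ be a lattice in a semistable $V$ with Hodge--Tate weights in $[0,r]$. I would take the Kisin module $\Em=\underline{\Em}^{(r)}(T^\vee)$ of height $\leq r$ (Proposition~\ref{prop:KisinBasicProperties}) and set
$$M:=S\otimes_{\varphi,\Es}\Em,\qquad \Fil^rM=\{x\in M:(1\otimes\varphi_{\Em})(x)\in \Fil^rS\otimes_{\Es}\Em\},\qquad \varphi_r=\varphi_r\otimes 1\text{ after linearizing}.$$
The conditions (1''), (5) and (6) of Definition~\ref{def:StronglyDivisible} follow formally from the height $\leq r$ condition and $r<p-1$; this is essentially the Caruso--Liu construction. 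The hard step is constructing $N$ on $M$. The $N_\nabla$ on $M[1/p]\simeq S\otimes_{W}D_{\st}(V)$ (Kisin's comparison) is canonical, and one must prove $N(M)\subseteq M$. Following Liu, I would try to show this via the $G_K$-action. Concretely, $M$ embeds $G_K$-equivariantly into $\widehat{A}_{\st}\otimes T^\vee$-type period modules, and $N$ is read off from $\log$ of the action of a generator $\tau$ of $\Gal(K_{\infty,p^\infty}/K_{p^\infty})$. Integrality then follows from bounding $(\tau-1)$ on $\Em$ inside $I^{[1]}\ainf$, using the explicit estimate $\varphi(\Em)$-stability and $r\leq p-2$ to control divided powers in the logarithm series. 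The properties $E(u)N(\Fil^r M)\subseteq\Fil^rM$ and $\varphi_r E(u)N=c_1N\varphi_r$ then follow by density from $M[1/p]$. Finally, I would check that $T_{\st}^\star(M)\simeq T$ by comparing with $T_{\Es}$ on $G_\infty$ and using full faithfulness of restriction to $G_\infty$.

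\emph{Crystalline case.} A semistable $V$ is crystalline if and only if $N=0$ on $D_{\st}(V)$. Reducing modulo $u$, one has $(M/uM)[1/p]\simeq D_{\st}(V)$ with monodromy induced by $N$. So $N(M)\subseteq uM$ if and only if $N_{D}=0$, using that $N$ is a derivation with $N(u)=-u$ and that $M$ is $p$-torsion free. This shows that the antiequivalence restricts to $\Mod(S)_r^{\varphi,N}\simeq\Rep_{\ZZ_p,\cris}^{[0,r]}(G_K)$.
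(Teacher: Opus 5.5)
The paper gives no proof of this statement; it quotes it from Liu, and from Iovita--Marmora for the crystalline refinement. Your outline has the same architecture as Liu's argument: base-change the Kisin module to $M=S\otimes_{\varphi,\Es}\Em$, then prove $N(M)\subseteq M$. However, the step that carries the whole theorem is only named, not proved. The element $\tau$ acts neither on $\Em$ nor on $M$. To run your $\log\tau$ argument you would first need $\ainf\otimes_{\Es}\Em$ (or an $S$-level variant) to be $G_K$-stable inside $W(\CC_K^\flat)\otimes_{\ZZ_p}T^\vee$. You would then need an estimate on $(\tau-1)^n$ strong enough that $\log\tau$ converges integrally, and that the $N$ obtained from it by dividing by $t$ still preserves $M$. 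Neither follows routinely from $\varphi$-stability and $r\leq p-2$, and your sketch supplies neither. There is also a twist slip: the Kisin module should be $\underline{\Em}(T^\vee)=\underline{\Em}^{(r)}(T^\vee(r))$. Your $\underline{\Em}^{(r)}(T^\vee)=\underline{\Em}(T^\vee(-r))$ has height up to $2r$.

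Beyond this, three steps you do spell out are false as stated.

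(i) Fullness. You invoke fullness of the torsion functor at each level $n$, but $T^\star_{\st,\infty}$ is full only under $er<p-1$, not in general, as the paper recalls. What your recovery idea actually needs is $M=\{x\in M[1/p]: f(x)\in\widehat{A}_{\st}\ \text{for all } f\in T^\star_{\st}(M)\}$. Equivalently, $M/pM\to\Hom_{\ZZ_p}(T^\star_{\st,\infty}(M/pM),\widehat{A}_{\st,\infty})$ must be injective, and you do not prove this. Note also that the natural comparison map goes from $\widehat{A}_{\st}\otimes_S M$ to $\widehat{A}_{\st}\otimes_{\ZZ_p}T^{\star\vee}$, not the other way.

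(ii) Identifying $T^\star_{\st}(M)$ with $T$. You use full faithfulness of restriction to $G_\infty$, which holds for crystalline lattices but fails for semistable ones. For the Tate curve with parameter $\pi$, the Kummer class of $\pi$ vanishes on $G_\infty$, so $T_p(E_\pi)|_{G_\infty}\simeq\ZZ_p(1)\oplus\ZZ_p$ even though $T_p(E_\pi)^{G_K}=0$. Instead you must show that the $G_\infty$-isomorphism from Kisin's theory is the restriction of the canonical $G_K$-identification of $T^\star_{\st}(M)[1/p]$ with the rational representation. Then both are the same lattice in one $G_K$-representation.

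(iii) The crystalline part. The $(\varphi,N)$-module is recovered modulo $I_0=\ker(S\to W,\ u\mapsto 0)$, not modulo $uS$; reducing modulo $\Fil^1S$, as in your fullness paragraph, gives $D_K$. The quotient $S/uS$ has $p$-torsion: $u^{pe}/p!\notin uS$, while $p\cdot u^{pe}/p!=u\cdot u^{pe-1}/(p-1)!\in uS$. So $(M/uM)[1/p]$ is not $D$. Consequently $N_D=0$ only yields $N(M)\subseteq I_0M$, and getting down to $N(M)\subseteq uM$ needs a separate argument. Only the implication $N(M)\subseteq uM\Rightarrow N_D=0$ goes through, after reducing modulo $I_0\supseteq uS$.
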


In the process of proving this, Liu showed commutativity of the following diagram. 

\begin{equation}\label{diag1}
   \begin{tikzcd}
 \Mod(S)_{r, \log}^{\varphi, N} \ar{r}{T_{\st}^\star} \ar{d}{U} & \Rep_{\ZZ_p}(G_K)_{\st}^{[0,r]} \ar{d}{\res} \\
 \Mod(S)_{r}^{\varphi} \ar{r}{T_{\cris}^\star} & \Rep_{\ZZ_p}(G_\infty)^{[0,r]}.
\end{tikzcd} 
\end{equation}
Here $\res$ is the restriction of the action functor and $T_{\cris}^\star(M)=\Hom_{\ModPrime(S)_{r}^{\varphi}}(M, A_{\cris})$. This commutativity means precisely that the homomorphism 
\begin{equation}\label{st-to-cris-covariant}
    \Hom_{\ModPrime(S)_{r}^{\varphi, N}}(M, \widehat{A}_{\st})\to \Hom_{\ModPrime(S)_{r}^{\varphi}}(M, A_{\cris}) 
\end{equation} induced by the homomorphism  $\widehat{A}_{\st}\to A_{\cris}$ sending $\gamma_i(x)$ to $0$ for $i\geq 1$ is an isomorphism of $\ZZ_p[G_\infty]$-modules. Here we abused notation and used $M$ for the Breuil module with or without monodromy. One first shows that (\ref{st-to-cris-covariant}) an isomorphism of $\ZZ_p$-modules. This is mentioned in \cite[Proof of Proposition 3.5.1]{Liu08} and it follows by an easy d\'{e}vissage argument from \cite[Lemme 2.3.1.1]{Breuil99}. In \cite[Proof of Corollary 3.5.2]{Liu08} Liu shows that in fact this is an isomorphism of $\ZZ_p[G_\infty]$-modules.

We will show next that the functor $T_{\st}^\star$ is exact. To do that we need a detour through the torsion objects. 

\begin{definition}
    Let $\Rep_{\ZZ_p, \tor}(G_K)$ be the category of torsion $\ZZ_p[G_K]$-modules.
    We consider the contravariant functors    
\begin{eqnarray*}
&&T_{\st,\infty}^\star: \Mod\FI(S)_{r}^{\varphi, N}\to \Rep_{\ZZ_p, \tor}(G_K), \;\;\;    T_{\st,\infty}^\star(M)=\Hom_{\ModPrime(S)_r^{\varphi, N}}(M, \widehat{A}_{\st,\infty})\\
&&T_{\cris,\infty}^\star: \Mod\FI(S)_{r}^{\varphi}\to \Rep_{\ZZ_p, \tor}(G_K), \;\;\; T_{\cris,\infty}^\star(M)=\Hom_{\ModPrime(S)_r^{\varphi}}(M, A_{\cris,\infty}).
\end{eqnarray*}
\end{definition} 
We summarize what is known about these functors. 
\begin{enumerate}
    \item[1.] The functor $T_{\st,\infty}^\star$ is fully faithful when $er<p-1$ (\cite[Th\'{e}or\`{e}me 2.3.1]{Caruso05}). In general, it is  not always full. 
    \item[2.] For $M\in \Mod\FI(S)_{r}^{\varphi, N}$ there  is an isomorphism  $T_{\st,\infty}^\star(M)\xrightarrow{\simeq}T_{\cris,\infty}^\star(U(M))$ as $\ZZ_p$-modules (cf.~\cite[Lemme 2.3.1.1]{Breuil99}). 
    \item[3.] The functor $T_{\cris,\infty}^\star$ is exact (\cite[Lemme 2.3.1.3]{Breuil99}). 
\end{enumerate}
Combining the last two facts, we conclude that the functor $T_{\st,\infty}^\star$ is also exact. This yields the following.

\begin{corollary}\label{cor:exactness1}
    The functor $T_{\st}^\star: \Mod(S)_{r, \log}^{\varphi, N}\to \Rep_{\ZZ_p}(G_K)_{\st}^{[0,r]}$ (and its restriction to the category of crystalline strongly divisible modules) is exact. 
\end{corollary}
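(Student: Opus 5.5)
We reduce exactness of $T_{\st}^\star$ on strongly divisible modules to the exactness of $T_{\st,\infty}^\star$ on the torsion objects $M_n=M/p^nM$, and then pass to the inverse limit. Take a short exact sequence $0\to M'\to M\to M''\to 0$ in $\Mod(S)_{r,\log}^{\varphi,N}$, meaning that both $0\to M'\to M\to M''\to 0$ and $0\to \Fil^rM'\to\Fil^rM\to\Fil^rM''\to 0$ are exact sequences of $S$-modules. Since $M''$ is $S$-free, the first sequence splits, so it stays exact modulo $p^n$. By condition 6, $M''/\Fil^rM''$ has no $p$-torsion, so $\Tor_1^{\ZZ}(M''/\Fil^r M'',\ZZ/p^n)=0$. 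The snake lemma, or a $\Tor$ long exact sequence applied to the filtration pieces, then shows that $0\to\Fil^rM'_n\to\Fil^rM_n\to\Fil^rM''_n\to 0$ is exact, where $\Fil^rM_n=\Fil^rM/p^n\Fil^rM$. Using that $\Fil^rM\cap p^nM=p^n\Fil^rM$, which follows from 6 by induction, this agrees with the image filtration. The reductions $\varphi_r$ and $N$ are compatible, so $0\to M'_n\to M_n\to M''_n\to 0$ is a short exact sequence in $\Mod\FI(S)_r^{\varphi,N}$.

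Next we identify $T_{\st}^\star(M)$ with $\varprojlim_n T_{\st,\infty}^\star(M_n)$. A morphism $M\to\widehat{A}_{\st}$ in $\ModPrime(S)^{\varphi,N}_r$ composed with $\widehat{A}_{\st}\to \widehat{A}_{\st}/p^n\simeq \widehat{A}_{\st,\infty}[p^n]$ factors through $M_n$. This gives a natural map $T_{\st}^\star(M)\to \varprojlim_n \Hom(M_n,\widehat{A}_{\st,\infty})$. Since $M_n$ is $p^n$-torsion, any morphism $M_n\to\widehat{A}_{\st,\infty}$ lands in $\widehat{A}_{\st,\infty}[p^n]=p^{-n}\widehat{A}_{\st}/\widehat{A}_{\st}\cong\widehat{A}_{\st}/p^n$, because $\widehat{A}_{\st}$ is $p$-torsion free. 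The filtration also matches, $\Fil^r$ of $\widehat{A}_{\st,\infty}[p^n]$ being $\Fil^r\widehat{A}_{\st}/p^n$, since $\widehat{A}_{\st}/\Fil^r$ is $p$-torsion free. The transition maps (multiplication by $p$ on $\widehat{A}_{\st,\infty}$, versus reduction) must be made compatible; this is the standard identification, as in \cite{Breuil99} and \cite{Liu08}. Because $M$ and $\widehat{A}_{\st}$ are $p$-adically complete and separated, a compatible system of maps $M_n\to\widehat{A}_{\st}/p^n$ glues uniquely to a map $M\to\widehat{A}_{\st}$ preserving $\Fil^r$, $\varphi_r$ and $N$. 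Hence $T_{\st}^\star(M)\cong\varprojlim_n T_{\st,\infty}^\star(M_n)$, naturally and $G_K$-equivariantly.

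Now apply the exactness of $T_{\st,\infty}^\star$, obtained just before the corollary from facts 2 and 3. For each $n$ this gives a short exact sequence of finite $\ZZ_p$-modules
\[0\to T_{\st,\infty}^\star(M''_n)\to T_{\st,\infty}^\star(M_n)\to T_{\st,\infty}^\star(M'_n)\to 0 .\]
These groups are finite: $T_{\st}^\star(M)$ is finite free, and $T_{\st,\infty}^\star(M_n)\cong T_{\st}^\star(M)/p^n$ once the identification above is established. So the Mittag-Leffler condition holds, and $\varprojlim$ preserves exactness. This yields the exact sequence $0\to T_{\st}^\star(M'')\to T_{\st}^\star(M)\to T_{\st}^\star(M')\to 0$.

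The crystalline case follows immediately, since $\Mod(S)^{\varphi,N}_r$ is a full subcategory with the induced exact structure. I expect the main obstacle to be the second step: verifying that the limit identification is compatible with the transition maps and with the filtration on $\widehat{A}_{\st,\infty}$. If that proves delicate, a fallback is to avoid limits. Left exactness of $\Hom(-,\widehat{A}_{\st})$ gives exactness except at the right end. Surjectivity of $T_{\st}^\star(M)\to T_{\st}^\star(M')$ can then be checked modulo $p$ using the $n=1$ case together with Nakayama's lemma on finite free $\ZZ_p$-modules, with ranks compared via $\mathrm{rk}_{\ZZ_p}T_{\st}^\star(M)=\mathrm{rk}_S M$ from Theorem~\ref{thm:Liu}.
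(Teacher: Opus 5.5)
Your proposal is correct and follows essentially the same route as the paper. Both arguments reduce modulo $p^n$, check that the $\Fil^r$-sequence stays exact, identify $T_{\st}^\star(M)$ with $\varprojlim_n T_{\st,\infty}^\star(M/p^nM)$, apply exactness of $T_{\st,\infty}^\star$, and conclude by Mittag-Leffler. One small correction: exactness of the $\Fil^r$-sequence mod $p^n$ only needs $\Fil^rM''$ to be $p$-torsion free, which holds because it sits inside the free module $M''$; this is what the paper uses, and condition 6 is instead what makes $\Fil^rM/p^n\Fil^rM$ inject into $M/p^nM$.
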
 

\begin{proof}
This follows by a d\'{e}vissage argument. Let $M=(M,\Fil^r M, \varphi_r, N)\in\Mod(S)_{r,\log}^{\varphi, N}$. 
Since by definition $\widehat{A}_{\st}$ and $\Fil^r \widehat{A}_{\st}$ are $p$-complete, we have an isomorphism $\widehat{A}_{\st}\simeq\varprojlim_{n}\widehat{A}_{\st}/p^n\widehat{A}_{\st}$ in $\ModPrime(S)_{r}^{\varphi, N}$. We have a sequence of isomorphisms 
\begin{eqnarray}
T_{\st}^\star(M)=&&\Hom_{\ModPrime(S)_r^{\varphi, N}}(M, \widehat{A}_{\st})=\Hom_{\ModPrime(S)_r^{\varphi, N}}(M, \varprojlim_{n}\widehat{A}_{\st}/p^n\widehat{A}_{\st}) \nonumber \\ 
\simeq && \varprojlim_n \Hom_{\ModPrime(S)_r^{\varphi, N}}(M, \widehat{A}_{\st}/p^n\widehat{A}_{\st}) \nonumber \\ =&& \varprojlim_n \Hom_{\ModPrime(S)_r^{\varphi, N}}(M/p^n M, \widehat{A}_{\st}/p^n\widehat{A}_{\st}) \nonumber \\
\simeq && \varprojlim_{n}\Hom_{\ModPrime(S)_r^{\varphi, N}}(M/p^n M, \widehat{A}_{\st}\otimes (\frac{1}{p^n} \ZZ_p)/\ZZ_p) \nonumber \\
=&& \varprojlim_{n}\Hom_{\ModPrime(S)_r^{\varphi, N}}(M/p^n M, \widehat{A}_{\st,\infty})\nonumber  \\
=&& \varprojlim_n T_{\st,\infty}^\star(M/p^n M). 
\end{eqnarray}

Furthermore, note that the system $\{T_{\st,\infty}^\star(M/p^n M)\}_n$ in the inverse limit is Mittag-Leffler since the modules $T_{\st,\infty}^\star(M/p^n M)$ are of finite $\ZZ_p$-length.

Now let $0\to M_1\xrightarrow{f} M_2\xrightarrow{g} M_3\to 0$ be an exact sequence in $\Mod(S)_{r,\log}^{\varphi, N}$. Recall that this means that the sequences of $S$-modules $0\to M_1\xrightarrow{f} M_2\xrightarrow{g} M_3\to 0$ and $0\to\Fil^r M_1\xrightarrow{f}\Fil^r M_2\xrightarrow{g}\Fil^r M_3\to 0$ are exact. Since each $M_i$ is free as $S$-module, it follows that for every $n\geq 1$ the induced sequence
\begin{equation}\label{ses1}
    0\to M_1/p^n M_1\xrightarrow{f} M_2/p^n M_2\xrightarrow{g} M_3/p^n M_3\to 0
\end{equation} is exact. 
We claim that the sequence 
\[0\to\Fil^r M_1/p^n\Fil^r M_1\xrightarrow{f}\Fil^r M_2/p^n\Fil^r M_2\xrightarrow{g}\Fil^r M_3/p^n\Fil^r M_3\to 0\] is also exact. 
This follows by snake lemma and the fact that multiplication by $p^n$ is injective on $\Fil^r M_3$ (since contained in the free module $M_3$).

Since $T_{\st,\infty}^\star$ is exact, for every $n\geq 1$ the sequence 
\[0\to T_{\st,\infty}^\star(M_3/p^n M_3)\xrightarrow{T_{\st,\infty}^\star(g)} T_{\st,\infty}^\star(M_2/p^n M_2)\xrightarrow{T_{\st,\infty}^\star(f)} T_{\st,\infty}^\star(M_1/p^n M_1)\to 0\] is exact. The Corollary then follows by the Mittag-Leffler property. 
\end{proof}

In this paper we will need covariant versions of the above theorems. 
We consider the functor $\mathbb{D}^r_{\fr}:\Rep_{\ZZ_p, \st}^{[0, r]}(G_K)\to\Rep_{\ZZ_p,  \st}^{[0, r]}(G_K)$ that sends a $\ZZ_p$-representation $T$ to its $r$-twisted $\ZZ_p$-linear dual $\Hom_{\ZZ_p}(T, \ZZ_p)(r)$. Since all objects of $\Rep_{\ZZ_p,  \st}^{[0, r]}(G_K)$ are free as $\ZZ_p$-modules, this is clearly an exact anti-equivalence of categories.
We will abuse notation and denote also by $\mathbb{D}^r_{\fr}$ the exact  anti-equivalence $\mathbb{D}^r_{\fr}:\Rep_{\ZZ_p}(G_\infty)\to\Rep_{\ZZ_p}(G_\infty)$, defined analogously.

\begin{definition}
    We define the \textbf{covariant} \'{e}tale realization functor \[T_{\st,\star}:\Mod(S)_{r, \log}^{\varphi, N}\to \Rep_{\ZZ_p,  \st}^{[0, r]}(G_K)\] to be the composition $T_{\st,\star}=\mathbb{D}_{\fr}^r\circ T_{\st}^\star$. 
 Similarly, we define 
    \[T_{\cris,\star}:\Mod(S)_{r}^{\varphi}\to \Rep_{\ZZ_p}(G_\infty)\] to be the composition $T_{\cris,\star}=\mathbb{D}_{\fr}^r\circ T_{\cris}^\star$. 
\end{definition} 
It follows from Theorem \ref{thm:Liu} and Corollary \ref{cor:exactness1} that $T_{\st,\star}$ and $ T_{\cris,\star}$ are exact functors. In fact, $T_{\st,\star}$ is an equivalence of categories which is exact. 

\begin{definition} \label{def_choice_Mst_}
    We denote by $\mathcal{M}_{\st}: \Rep_{\ZZ_p, \cris}^{[0, r]}(G_K)\rightarrow \Mod(S)_{r}^{\varphi, N}$ (a fixed choice of) the quasi-inverse of $T_{\st,\star}$. 
\end{definition}

\begin{example} Following Example \ref{ex1} we have: $T_{\st,\star}(\mathbf{1})=T_{\st}^\star(\mathbf{1}(r))=\ZZ_p$ and $T_{\st,\star}(\mathbf{1}(r))=T_{\st}^\star(\mathbf{1})=\ZZ_p(r)$. 
    
\end{example}

\begin{remark}
We note that in \cite{Liu08}  the covariant functor $T_{\cris,\star}$ is defined differently; namely, as the composition $T_{\cris,\star}=T_{\cris}^\star\circ \mathbb{D}\Mod^r_{\fr}$, where 
\begin{eqnarray*}
    \mathbb{D}\Mod^r_{\fr}:&& \Mod(S)_r^{\varphi}\to\Mod(S)_r^\varphi\\ && M\mapsto M^\star
\end{eqnarray*}
is an exact antiequivalence that serves as Cartier duality on strongly divisible modules without monodromy defined by X. Caruso in his thesis (\cite{Caruso05Thesis}). For $M\in\Mod(S)_r^{\varphi}$ the dual module $M^\star$ has underlying $S$-module $M^\star:=\Hom_S(M, S)$ and filtration  $\Fil^r(M^\star):=\{f\in M^\star: f(\Fil^r M)\subset\Fil^r S)\}$. For the definition of the Frobenius $\varphi_r$ on $M^\star$ we refer to \cite[Section 4.2]{Liu08}. It turns out that the two definitions of $T_{\cris,\star}$ are equivalent. Namely, the obvious perfect pairing $i: M\times M^\star\to S$ between a module and its dual is compatible with filtrations and Frobenius on both sides if we give $S$ the structure $\mathbf{1}(r)=(S, \Fil^r S, \varphi_r)$. Moreover, it induces a perfect pairing of $\ZZ_p[G_\infty]$-modules
\[T_{\cris}^\star(M)\times T_{\cris}^\star(M^\star)\to T_{\cris}^\star(\mathbf{1}(r)^\star)=T_{\cris}^\star(\mathbf{1})=\ZZ_p(r)\]  (see \cite[Lemma 4.3.1]{Liu08}). This yields a canonical isomorphism \[T_{\cris}^\star(M^\star)\simeq\Hom_{\ZZ_p}(T_{\cris}^\star(M), \ZZ_p(r)),\]  from where the equivalence of the two definitions follows.
\end{remark}

We will need the following more explicit description of $T_{\cris,\star}$. 

\begin{lemma}\label{lem:Tcrisexplicit} 
    Let $M\in\Mod(S)_r^\varphi$. Then there is a canonical isomorphism
    \[T_{\cris,\star}(M)\simeq \Fil^r(M\otimes_S A_{\cris})^{\varphi_r=1}.\]
\end{lemma}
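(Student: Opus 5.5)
The plan is to construct a natural pairing
\[
\Fil^r(M\otimes_S A_{\cris})^{\varphi_r=1}\times T^\star_{\cris}(M)\longrightarrow \Fil^r(A_{\cris})^{\varphi_r=1},
\]
identify the target with $\ZZ_p(r)$, and show that the pairing is perfect. The adjoint map $\Fil^r(M\otimes_S A_{\cris})^{\varphi_r=1}\to \Hom_{\ZZ_p}(T^\star_{\cris}(M),\ZZ_p(r))=\mathbb{D}^r_{\fr}(T^\star_{\cris}(M))=T_{\cris,\star}(M)$ will then be the desired canonical isomorphism.

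\textbf{Step 1: the pairing.} Given $f\in T^\star_{\cris}(M)=\Hom_{\ModPrime(S)^{\varphi}_r}(M,A_{\cris})$, extend it $A_{\cris}$-linearly to $f_{A}: M\otimes_S A_{\cris}\to A_{\cris}$. Here $\Fil^r(M\otimes_S A_{\cris})$ is defined, as usual, to be the image of $\Fil^rM\otimes_S A_{\cris}+M\otimes_S\Fil^rA_{\cris}$, with $\varphi_r$ the induced semilinear map. I will check that $f_A$ preserves $\Fil^r$ and commutes with $\varphi_r$. The only nontrivial point is on mixed terms, and there it follows from the rule $\varphi_r(sm)=c_1^{-r}\varphi_r(s)\varphi_r(E(u)^rm)$ applied on both sides. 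Consequently $f_A$ sends $\varphi_r$-fixed vectors of $\Fil^r$ into $(\Fil^rA_{\cris})^{\varphi_r=1}$. It is classical (Fontaine; valid for $r<p-1$) that $(\Fil^rA_{\cris})^{\varphi_r=1}=\ZZ_p t^{r}$ with the $G_K$-action by $\chi^r$, so $f_A$ lands in $\ZZ_p(r)$. The resulting pairing is bilinear and $G_\infty$-equivariant, since $G_\infty$ fixes the image of $S$ in $A_{\cris}$.

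\textbf{Step 2: perfectness.} Rather than prove this directly, I will use Caruso's duality recalled in the preceding Remark. The perfect pairing $M\times M^\star\to \mathbf{1}(r)$ yields
\[
\Fil^r(M\otimes_S A_{\cris})^{\varphi_r=1}\simeq \Hom_{\ModPrime(S)^\varphi_r}(M^\star,A_{\cris})=T^\star_{\cris}(M^\star).
\]
To see this, note that $M$ is free, so $M\otimes_S A_{\cris}=\Hom_S(M^\star,A_{\cris})$. Under this identification, an element lies in $\Fil^r$ and is fixed by $\varphi_r$ exactly when the corresponding homomorphism respects $\Fil^r$ and $\varphi_r$; this is where the definitions of $\Fil^r M^\star$ and of $\varphi_r$ on $M^\star$ from \cite[Section 4.2]{Liu08} come in. Combining with the canonical isomorphism $T^\star_{\cris}(M^\star)\simeq \Hom_{\ZZ_p}(T^\star_{\cris}(M),\ZZ_p(r))$ from \cite[Lemma 4.3.1]{Liu08} gives the claim, and I will check that the composite agrees with the pairing of Step 1.

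\textbf{Main obstacle.} The hard part is the identification in Step 2. One must verify that the $S$-linear isomorphism $M\otimes_S A_{\cris}\simeq\Hom_S(M^\star,A_{\cris})$ carries $\Fil^r(M\otimes A_{\cris})$ exactly onto $\{h : h(\Fil^rM^\star)\subseteq \Fil^r A_{\cris}\}$. One inclusion is formal. For the other, I would first try a dévissage modulo $p^n$, using the $p$-saturation condition 6 and Breuil's adapted-basis description of $\Fil^rM$ (valid for $r<p-1$). If that fails, I would fall back on proving perfectness of the Step 1 pairing directly: injectivity by faithful flatness arguments over $W(\CC_K^\flat)$, and then equality of $\ZZ_p$-ranks via Theorem~\ref{thm:Liu}.
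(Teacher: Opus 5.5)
Your reduction is the same as the paper's. You use $T_{\cris,\star}(M)=\mathbb{D}^r_{\fr}T^\star_{\cris}(M)\simeq T^\star_{\cris}(M^\star)$ via \cite[Lemma 4.3.1]{Liu08} (as in the Remark before the lemma), and then need an identification $T^\star_{\cris}(M^\star)\simeq \Fil^r(M\otimes_S A_{\cris})^{\varphi_r=1}$. The paper does not prove this last identification; it takes it from \cite[Lemma 4.3.2]{Liu08} together with the double duality $(M^\star)^\star\simeq M$ of \cite[Theorem 4.2.1]{Liu08}.

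You try to prove it from scratch and stop exactly at the hard point, so as written the key step is missing. You only have the formal inclusion of $\Fil^r(M\otimes_S A_{\cris})$ into $\{h\in\Hom_S(M^\star,A_{\cris}) : h(\Fil^r M^\star)\subseteq \Fil^r A_{\cris}\}$. The reverse inclusion is genuinely nontrivial. With $A_{\cris}$ replaced by $S$, it is precisely the filtration part of $M\simeq (M^\star)^\star$, namely $\Fil^rM=\{m : f(m)\in\Fil^rS \text{ for all } f\in \Fil^rM^\star\}$, which is part of the content of Liu's duality theorem. Over $A_{\cris}$ you must in addition control $\Fil^r(M\otimes_SA_{\cris})$, which is only defined as an image. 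Moreover $M$ need not be filtered free, so the dévissage you sketch is not routine, and you do not carry it out.

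Your fallback does not close the gap either. An injective $\ZZ_p$-linear map between finite free $\ZZ_p$-modules of the same rank can have nonzero finite cokernel. So ``injective plus equal ranks'' only gives an isomorphism after inverting $p$, whereas the lemma is an integral statement. Also, Theorem~\ref{thm:Liu} does not by itself give the rank of $\Fil^r(M\otimes_SA_{\cris})^{\varphi_r=1}$.

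The simplest repair is to do what the paper does: cite \cite[Lemma 4.3.2]{Liu08} together with the double duality of \cite[Theorem 4.2.1]{Liu08}. With that in place, your Step 1 (the explicit pairing into $(\Fil^rA_{\cris})^{\varphi_r=1}$) is unnecessary.
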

\begin{proof}
This follows by applying \cite[Lemma 4.3.2]{Liu08} for $(M^\star)^\star$ and using the isomorphism $(M^\star)^\star\simeq M$ (cf.~\cite[Theorem 4.2.1]{Liu08}). 
    \end{proof}

\vspace{2pt}

\subsection{Comparison functors between Kisin and Breuil modules}\label{subsec:KisinToBreuil}
\vspace{5pt}
We now review the functor $\mathcal{M}^{(r)}_{\Es}$, connecting the categories of Kisin and Breuil modules. This is the classical functor originally described by Breuil \cite{BreuilUnpub} and further studied by Kisin, Caruso, Liu \cite{Kisin1, Liu08, CarusoLiu} and others. Throughout this section, let us fix an integer $r$ with $0\leq r<p-1$ (we include $r$ in the notation to keep track of the fact that the functor depends on this choice of $r$).

We consider the category $\Mod(S)_{r}^{\varphi}$. Recall from Definition \ref{def:StronglyDivisible} (b) that the objects in this category are triples $(M,\Fil^rM, \varphi_r)$ where $M$ is a free $S$-module of finite rank, $\varphi_r(\Fil^r M)$ generates $M$ as $S$-module, and $\Fil^r M\cap pM=p\Fil^r M$. 

\begin{definition} 
We consider the comparison functor  $\mathcal{M}^{(r)}_{\Es}: \Mod_{\Es, \fr}^{\varphi, \leq r}\rightarrow \Mod(S)_{r}^{\varphi}$ defined as follows. For $\Em \in \Mod_{\Es, \fr}^{\varphi, \leq r},$ we have:
\begin{enumerate}
\item The underlying (finite free) $S$-module of $\mathcal{M}^{(r)}_{\Es}(\Em)$ is the module $S\otimes_{\Es}\varphi_{\Es}^* \Em.$
\item Note that the map $\varphi_{\Em, \lin}: \varphi_{\Es}^* \Em \rightarrow \Em$ induces the map 
$$1\otimes \varphi_{\Em, \lin}: S\otimes_{\Es}\varphi_{\Es}^* \Em \rightarrow S\otimes_{\Es} \Em .$$ Using this map, $\Fil^r\mathcal{M}^{(r)}_{\Es}(\Em)$ is defined as follows:
\[\Fil^r\mathcal{M}^{(r)}_{\Es}(\Em)=\{x \in S\otimes_{\Es}\varphi_{\Es}^* \Em\;|\; 1\otimes \varphi_{\Em, \lin} (x)\in \Fil^r S\otimes_{\Es} \Em \}\,.\]
\item{The operator $\varphi_r: \Fil^r\mathcal{M}^{(r)}_{\Es}(\Em) \rightarrow \mathcal{M}^{(r)}_{\Es}(\Em)$ is given as the composite
\begin{center}
\begin{tikzcd}
\Fil^r\mathcal{M}^{(r)}_{\Es}(\Em)  \ar[r, "1\otimes \varphi_{\Em, \lin}"] & \Fil^rS \otimes_{\Es} \Em \ar[r, "\varphi_{r,S} \otimes 1"] & S\otimes_{\Es, \varphi} \Em= \mathcal{M}^{(r)}_{\Es}(\Em).
\end{tikzcd}
\end{center}}
\end{enumerate}
\end{definition}

\begin{proposition}[{\cite[Proposition 2.1.2, Theorem 2.2.1]{CarusoLiu}}]\label{prop:KisinToBreuilEquivalence}
The functor $\mathcal{M}^{(r)}_{\Es}$  is a well-defined equivalence of categories. Moreover, it is exact with respect to the natural exact structures on both sides.
\end{proposition}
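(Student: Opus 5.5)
The plan is to reduce everything to explicit matrix descriptions of the Kisin module and of its Breuil module. Exactness is then a formal argument once one knows the quotient $\mathcal{M}^{(r)}_{\Es}(\Em)/\Fil^r$ embeds into $(S/\Fil^r S)\otimes_{\Es}\Em$. The equivalence itself is the content of \cite{CarusoLiu}, and I would follow their strategy.

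\emph{Well-definedness.} Fix an $\Es$-basis $e_1,\dots,e_d$ of $\Em\in\Mod_{\Es,\fr}^{\varphi,\leq r}$ and write $\varphi_{\Em}(e)=eA$. Height $\leq r$ gives a matrix $B$ with $AB=E(u)^r I$. I would then show
\[
\Fil^r\mathcal{M}^{(r)}_{\Es}(\Em)=\bigl(\textstyle\sum_j S\cdot (1\otimes e)B_j\bigr)+\Fil^rS\cdot \mathcal{M}^{(r)}_{\Es}(\Em),
\]
where $B_j$ are the columns of $B$. The inclusion ``$\supseteq$'' is immediate. For ``$\subseteq$'', write $1\otimes\varphi_{\Em,\lin}(x)$ in $\Fil^rS\otimes\Em$ and use $E(u)^r\Em\subseteq\varphi_{\Em,\lin}(\varphi^*\Em)$ together with the description of $\Fil^r S$ as generated by $E(u)^r$ and the $E(u)^j/j!$ for $j\geq r$. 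From this description the axioms follow:
\begin{itemize}
\item $\varphi_r$ of the generators $(1\otimes e)B$ equals $(1\otimes e)\cdot c_1^{r}$ up to units, hence generates the module;
\item the compatibility $\varphi_r(sm)=c_1^{-r}\varphi_r(s)\varphi_r(E(u)^rm)$ is a direct computation;
\item $p$-saturation of $\Fil^r$ follows because $\mathcal{M}^{(r)}_{\Es}(\Em)/\Fil^r$ injects, via $1\otimes\varphi_{\Em,\lin}$, into $(S/\Fil^rS)\otimes_{\Es}\Em$, which is $p$-torsion free since $r<p-1$ ensures $S/\Fil^r S$ has no $p$-torsion.
\end{itemize}

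\emph{Equivalence.} Full faithfulness and essential surjectivity are the hard part, and I expect essential surjectivity to be the main obstacle. Given $(M,\Fil^rM,\varphi_r)$, one must descend it to a $\varphi$-stable $\Es$-lattice $\Em$ with $M\simeq S\otimes_{\Es}\varphi^*\Em$. I would follow Caruso--Liu:
\begin{enumerate}
\item Choose an adapted basis of $M$ in which $\varphi_r$ on generators of $\Fil^rM$ is given by a matrix over $S$.
\item Split $S=\Es\text{-part}+I$, where $I$ is the kernel-type ideal on which $\varphi$ is topologically nilpotent. The relevant ideal is the closure of the span of the $E(u)^j/j!$ with $j\geq p$; on it $\varphi/p^{r}$ is $p$-adically contracting precisely because $r<p-1$.
\item Run a successive-approximation argument to move the matrix into $\mathrm{M}_d(\Es)$, producing $\Em$.
\end{enumerate}
Full faithfulness uses the same contraction to show that morphisms of Breuil modules are forced to come from $\Es$-linear maps.

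\emph{Exactness.} Take a short exact sequence $0\to\Em_1\to\Em_2\to\Em_3\to0$ in $\Mod_{\Es,\fr}^{\varphi,\leq r}$. Since $\Em_3$ is free, it splits as a sequence of $\Es$-modules, so $S\otimes_{\Es}\varphi^*(-)$ gives a short exact sequence $0\to M_1\to M_2\to M_3\to0$ of free $S$-modules. For the filtrations, write $Q_i=M_i/\Fil^rM_i$, which is a quotient of $M_i$. The sequence $0\to Q_1\to Q_2\to Q_3\to0$ is right exact, since $M_2\to M_3$ is surjective and $\Fil^r M_2$ maps into $\Fil^r M_3$. The map $Q_1\to Q_2$ is injective: $Q_1$ embeds into $(S/\Fil^rS)\otimes\Em_1$, and this maps injectively to $(S/\Fil^rS)\otimes\Em_2$ by the splitting, compatibly with the embedding of $Q_2$. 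It remains to see exactness of $Q_\bullet$ in the middle. An element of $Q_2$ mapping to $0$ in $Q_3$ lifts to $m\in M_2$ with image in $\Fil^rM_3$; then $1\otimes\varphi_{\lin}(m)$ has image in $\Fil^rS\otimes\Em_3$, so it differs from an element of $\Fil^rS\otimes\Em_2$ by an element of $S\otimes\Em_1$. This adjusts $m$ modulo $\Fil^rM_2$ into $M_1$. Now apply the $3\times3$ lemma to the columns $0\to\Fil^rM_i\to M_i\to Q_i\to0$: the middle and bottom rows are short exact, so the row $0\to\Fil^rM_1\to\Fil^rM_2\to\Fil^rM_3\to0$ is short exact, which is exactness in the sense of $\ModPrime(S)^{\varphi}_r$.
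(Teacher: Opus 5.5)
Your check that $\mathcal{M}^{(r)}_{\Es}(\Em)$ is a strongly divisible module is sound. For the equivalence you defer to Caruso--Liu, which is all the paper does as well.

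The gap is in the exactness part, at the sentence ``This adjusts $m$ modulo $\Fil^rM_2$ into $M_1$.'' Since $M_\bullet$ is short exact, exactness of $Q_\bullet$ in the middle is equivalent to surjectivity of $\Fil^rM_2\to\Fil^rM_3$. So this one sentence carries the entire content of exactness on filtrations, and nothing before it justifies it.

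From $1\otimes\varphi_{\lin}(m)=y+z$ with $y\in\Fil^rS\otimes_{\Es}\Em_2$ and $z\in S\otimes_{\Es}\Em_1$, you would need some $m_1\in M_1$ with $1\otimes\varphi_{\lin}(m_1)\equiv z$ modulo $\Fil^rS\otimes_{\Es}\Em_1$. But $1\otimes\varphi_{\lin}(M_1)+\Fil^rS\otimes_{\Es}\Em_1$ is in general a proper submodule of $S\otimes_{\Es}\Em_1$; the quotient is $(S/\Fil^rS)\otimes_{\Es}\coker(\varphi_{\Em_1,\lin})$. Nor need $y$ lie in $1\otimes\varphi_{\lin}(\Fil^rM_2)$.

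Your argument never uses that the middle term $\Em_2$ has height $\leq r$, and without that hypothesis the conclusion is false for $r\geq 1$. Take $\Em_2=\Es e_1\oplus\Es e_2$ with $\varphi(e_1)=E(u)^re_1$ and $\varphi(e_2)=e_1+E(u)^re_2$, and set $\Em_1=\Es e_1$, $\Em_3=\Em_2/\Em_1$.
\begin{itemize}
\item The outer terms have height $\leq r$, but $\Em_2$ does not, since $E(u)^re_2\notin\Es\varphi(\Em_2)$.
\item With $f_i=1\otimes e_i$, the same formulas give $\Fil^rM_2=Sf_1\oplus\Fil^rS\cdot f_2$ and $\Fil^rM_3=M_3$.
\item For $m=f_2$ you are exactly in your situation, with $y=E(u)^re_2$ and $z=e_1$. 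Yet $f_2\notin M_1+\Fil^rM_2$.
\end{itemize}

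The fix is to decompose $m$ itself rather than $\varphi_{\lin}(m)$. Use two facts valid for $r<p$: $S=\Es+\Fil^rS$ and $\Es\cap\Fil^rS=E(u)^r\Es$. These are also the clean way to get the inclusion ``$\subseteq$'' in your description of $\Fil^r$.
\begin{enumerate}
\item Write $m=m^0+m^1$ with $m^0\in\varphi^*\Em_2$ and $m^1\in\Fil^rS\cdot M_2\subseteq\Fil^rM_2$.
\item The image $\bar m^0\in\varphi^*\Em_3$ still lies in $\Fil^rM_3$. Hence $\varphi_{\Em_3,\lin}(\bar m^0)\in\Em_3\cap(\Fil^rS\otimes_{\Es}\Em_3)=E(u)^r\Em_3$; write it as $E(u)^rn_3$.
\item Lift $n_3$ to $n_2\in\Em_2$. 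Because $\Em_2$ has height $\leq r$, $E(u)^rn_2=\varphi_{\Em_2,\lin}(m')$ for some $m'\in\varphi^*\Em_2$.
\item This $m'$ lies in $\Fil^rM_2$, and it maps to $\bar m^0$ by injectivity of $\varphi_{\Em_3,\lin}$. Thus $m'+m^1\in\Fil^rM_2$ lifts $\bar m$.
\end{enumerate}

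Equivalently, choose a basis of $\Em_2$ extending one of $\Em_1$. The matrix $B_2=E(u)^rA_2^{-1}$ is integral exactly because of the height condition, and it is block upper triangular with lower-right block $B_3$. So your generators of $\Fil^rM_3$ lift to elements of $\Fil^rM_2$.
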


\begin{remark}\label{rem:SNotFlat}
Regarding the exactness statement, we warn the reader that $S$ is not flat over $\Es$: Reducing mod $p$, $\Es$ becomes the DVR $k[[u]]$, whereas in $S/p$, one has $u^{ep}=0$ as a consequence of the relation $E(u)^p=p((p-1)!E(u)^p/p!)$ valid in $S$.

On the other hand, $S$ is ``faithful'' for the tensor product, in the sense that for every finitely generated $\Es$-module $\Em\neq 0,$ one has $S\otimes_{\Es}\Em\neq 0$. Indeed, since $\Em$ is finitely generated, this claim reduces to showing that $S \otimes_{\Es} k \neq 0$ where $k=\Es/(p, u)$, which is true 
because we have $S \otimes_{\Es} k \simeq S/ (p,u) = k$.
\end{remark}

\begin{theorem}\label{thm:QuasiInverseCompatibility}
Given a Kisin module $\Em \in \Mod_{\Es, \fr}^{\varphi, \leq r},$ we have a natural isomorphism $$T_{\Es}^{(r)}(\Em) \stackrel{\sim}\rightarrow T_{\cris, \star}(\mathcal{M}_{\Es}^{(r)}(\Em)). $$
\end{theorem}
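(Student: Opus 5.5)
We construct an explicit $\ZZ_p[G_\infty]$-linear map $T_{\Es}^{(r)}(\Em)\to \Fil^r(\mathcal{M}^{(r)}_{\Es}(\Em)\otimes_S A_{\cris})^{\varphi_r=1}$ and then check it is an isomorphism. By Lemma~\ref{lem:Tcrisexplicit}, the target is $T_{\cris,\star}(\mathcal{M}^{(r)}_{\Es}(\Em))$. To build the map, first use Lemma~\ref{lem:KisinDualT} together with \cite[Proposition~1.8.3]{Fontaine90}: every element of $T_{\Es}(\Em)=(\Em\otimes_{\Es}W(\CC_K^\flat))^{\varphi=1}$ already lies in $\Em\otimes_{\Es}\ainf$, since $\Em$ is effective of finite height. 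To see this, write $T_{\Es}(\Em)=(T_{\Es}(\Em)^\vee)^\vee$ and use that homomorphisms into $W(\CC_K^\flat)$ land in $\Es^{\mathrm{ur}}\subseteq\ainf$. It will be cleaner to phrase this covariantly via the dual module $\Em^\vee$ and an $E(u)$-twist.

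Next fix $\mathfrak t\in\ainf$ with $\varphi(\mathfrak t)=c\,E(u)\mathfrak t$ for a unit $c$, and with $\mathfrak t$ generating $\ZZ_p(1)$-type periods; concretely, $\varphi(\mathfrak t)\cdot(\text{unit}) = t\in \Fil^1A_{\cris}$, where $t=\log[\varepsilon]$. Given $x\in T_{\Es}(\Em)\subseteq \Em\otimes\ainf$, set $y=(1\otimes x)\cdot \varphi(\mathfrak t)^r$ in $\varphi^*\Em\otimes_{\Es}A_{\cris}=\mathcal{M}^{(r)}_{\Es}(\Em)\otimes_S A_{\cris}$, adjusted by the usual constant. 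This is the standard Kisin--Liu construction. Then:
\begin{enumerate}
\item Applying $1\otimes\varphi_{\Em,\lin}$ sends $1\otimes x$ to $\varphi(x)=x$, and $t^r\in\Fil^rA_{\cris}$, so $y\in\Fil^r$.
\item We have $\varphi_r(y)=\varphi(x)\,\varphi(t)^r/p^r$, and this equals $y$ after the normalization $\varphi(t)=pt$.
\item $G_\infty$ fixes $\varphi^*\Em$ and acts on $t^r$ by $\chi^r$, which accounts precisely for the Tate twist $(r)$.
\end{enumerate}
This gives a natural, injective $\ZZ_p[G_\infty]$-map $T^{(r)}_{\Es}(\Em)\to T_{\cris,\star}(\mathcal{M}^{(r)}_{\Es}(\Em))$. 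Injectivity holds because $A_{\cris}$ is $t$-torsion free and $\ainf\to A_{\cris}$ is injective.

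Surjectivity is the main obstacle. Both sides are finite free $\ZZ_p$-modules of the same rank $d=\mathrm{rk}\,\Em$: the left by Proposition~\ref{prop:PropertiesOfT}, the right by Theorem~\ref{thm:Liu} and the freeness of $T_{\cris,\star}$. It therefore suffices to show the cokernel is $p$-torsion free, i.e.\ that the map is injective modulo $p$. For this, compare with the torsion theory: reduce modulo $p$ and use exactness of $T_{\cris,\infty}^\star$ (\cite[Lemme 2.3.1.3]{Breuil99}) and of the $\varphi$-module realization. The map above is compatible with reduction, so injectivity modulo $p$ reduces to the analogous statement for $\Em/p$ and $A_{\cris}/p$. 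There the argument of \cite[Lemma 3.3.4, Proposition 3.4.3]{Liu08} applies, which is exactly where $r<p-1$ is used, to control $\Fil^r$ of $A_{\cris}/p$.

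If this is hard to carry out directly, I would instead dualize. Lemma~\ref{lem:KisinDualT} identifies $T_{\Es}(\Em)^\vee$ with $\hom_{\Es,\varphi}(\Em,\Es^{\mathrm{ur}})$. Liu's theorem \cite[Theorem 2.4.1 / Corollary 3.5.2]{Liu08} gives a natural $G_\infty$-isomorphism $\hom_{\Es,\varphi}(\Em,\Es^{\mathrm{ur}})\simeq T_{\cris}^\star(\mathcal{M}^{(r)}_{\Es}(\Em))$, induced by $\Es^{\mathrm{ur}}\to A_{\cris}$, $f\mapsto 1\otimes\varphi\circ f$. Applying $\mathbb{D}^r_{\fr}$ then yields $T_{\Es}(\Em)(r)\simeq T_{\cris,\star}(\mathcal{M}^{(r)}_{\Es}(\Em))$. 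Naturality follows from naturality of each step, so it remains only to check compatibility of the Tate twist conventions.
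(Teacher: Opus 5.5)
Your final, dual paragraph is exactly the paper's proof, and it is complete. Lemma~\ref{lem:KisinDualT} gives $T_{\Es}(\Em)^\vee\simeq\hom_{\Es,\varphi}(\Em,\Es^{\mathrm{ur}})=\hom_{\Es,\varphi}(\Em,\ainf)$. Liu's comparison $f\mapsto(s\otimes m\mapsto s\,\varphi(f(m)))$ identifies this $G_\infty$-equivariantly with $T^\star_{\cris}(\mathcal{M}^{(r)}_{\Es}(\Em))$; the precise reference is \cite[Lemma~3.3.1]{Liu08}, which is what the paper uses. Then one applies $\mathbb{D}^r_{\fr}$. The twist check you leave open is immediate: $\mathbb{D}^r_{\fr}(T^\vee)=T^{\vee\vee}(r)=T(r)$ canonically for finite free $T$, and $T_{\cris,\star}=\mathbb{D}^r_{\fr}\circ T^\star_{\cris}$ by definition.

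Drop the explicit route you lead with: as written it has one false step and one unproved step.

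\textbf{The false step.} It is not true that $T_{\Es}(\Em)\subseteq\Em\otimes_{\Es}\ainf$ for effective $\Em$. The integrality in Lemma~\ref{lem:KisinDualT} and \cite[Proposition~1.8.3]{Fontaine90} is for $\varphi$-maps \emph{out of} an effective module, i.e.\ it concerns the dual. Take $\Em=\Es\{-1\}=\Es w$ with $\varphi(w)=c_0^{-1}E(u)w$. An invariant $aw$ requires $a=c_0^{-1}E(u)\varphi(a)$. Modulo $p$ this reads $\bar a=\bar c_0^{-1}\underline{\pi}^e\bar a^p$, which forces $\bar a=0$ by comparing valuations; iterating gives $a\in\bigcap_n p^n\ainf=0$. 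In fact $T_{\Es}(\Em)=\ZZ_p\,\mathfrak t^{-1}w$.

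The correct statement is $\mathfrak t^r\,T_{\Es}(\Em)\subseteq\Em\otimes_{\Es}\ainf$ when $\Em$ has height $\le r$. To see this, apply the lemma to the effective module $\Em^\vee\otimes\Es\{-r\}$. With this correction, your $y=\varphi(\mathfrak t)^r(1\otimes x)$ is integral precisely because of these denominators. It lies in $\Fil^r$ because $(1\otimes\varphi_{\Em,\lin})(y)=\varphi(\mathfrak t)^r x=(\text{unit})\cdot E(u)^r\,(\mathfrak t^r x)$ with $\mathfrak t^r x$ integral. It is not because $t^r\in\Fil^rA_{\cris}$ multiplies an integral vector.

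\textbf{The unproved step.} Surjectivity is never established. First, the rank of the right-hand side cannot be taken from Theorem~\ref{thm:Liu}. A general $\mathcal{M}^{(r)}_{\Es}(\Em)$ need not carry a monodromy operator, so you would need Breuil's torsion theory for $T^\star_{\cris,\infty}$ instead. Second, injectivity modulo $p$ is simply deferred to Liu's arguments, which is the very comparison theorem your fallback cites.

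So the explicit route adds nothing beyond that citation. Present the dual argument as the proof.
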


\begin{proof}
By \cite[Lemma~3.3.1]{Liu08}, one has a natural isomorphism of $G_{\infty}$-representations
$$\Hom_{\Es, \varphi}(\Em, \ainf) \simeq \Hom_{S, \Fil^r, \varphi_r}(\mathcal{M}^{(r)}_{\Es} (\Em), A_{\cris})\simeq T_{\cris}^{\star}(\mathcal{M}^{(r)}_{\Es} (\Em)).$$
Applying the dual $\mathbb{D}_{\fr}^r$ to both sides now yields the result due to Lemmas \ref{lem:KisinDualT} and \ref{lem:Tcrisexplicit}.
\end{proof}

We use all the established relations between \'{e}tale realization functors to get the following result that allows us to compare the various functors in the opposite direction.

\begin{corollary}\label{cor:CompatibleQuasiInverses}
For $T \in \Rep_{\ZZ_p, \st}^{[0, r]}(G_K),$ there is a natural  isomorphism 
$$U \mathcal{M}_{\st}(T)\simeq \mathcal{M}_{\Es}^{(r)}\underline{\Em}^{(r)}(T)\,,$$
where $U$ is the forgetful functor defined in Definition in \ref{Def_U}.
\end{corollary}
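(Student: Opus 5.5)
The idea is to compare both sides after applying the fully faithful functor $T_{\cris,\star}$ on $\Mod(S)_r^{\varphi}$. Concretely, $T_{\cris,\star}=\mathbb{D}^r_{\fr}\circ T^\star_{\cris}$, and $T^\star_{\cris}$ is fully faithful on $\Mod(S)_r^{\varphi}$. This full faithfulness comes from the equivalence $\mathcal{M}^{(r)}_{\Es}$ of Proposition~\ref{prop:KisinToBreuilEquivalence} together with Theorem~\ref{thm:QuasiInverseCompatibility}, which identifies $T_{\cris,\star}\circ\mathcal{M}^{(r)}_{\Es}$ with $T^{(r)}_{\Es}$. The latter is fully faithful on free Kisin modules by Proposition~\ref{prop:PropertiesOfT}(3), since twisting is an autoequivalence. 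So it suffices to produce a natural isomorphism of $\ZZ_p[G_\infty]$-modules between the realizations of the two sides. Full faithfulness then lifts it uniquely to an isomorphism in $\Mod(S)_r^{\varphi}$, and uniqueness makes it natural in $T$.

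The realizations are computed as follows.

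On the left, the commutative diagram \eqref{diag1} gives $T^\star_{\cris}(U\mathcal{M}_{\st}(T))\simeq \res\, T^\star_{\st}(\mathcal{M}_{\st}(T))$ as $G_\infty$-modules. The map $\widehat{A}_{\st}\to A_{\cris}$ is $G_\infty$-equivariant, because $G_\infty$ fixes $[\underline{\pi}]$ and hence fixes $x$. Applying $\mathbb{D}^r_{\fr}$, which commutes with restriction to $G_\infty$, gives $T_{\cris,\star}(U\mathcal{M}_{\st}(T))\simeq T_{\st,\star}(\mathcal{M}_{\st}(T))|_{G_\infty}\simeq T|_{G_\infty}$, naturally in $T$. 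Here $\mathcal{M}_{\st}$ is taken as the quasi-inverse of $T_{\st,\star}$ on the full semistable category, which is legitimate by Theorem~\ref{thm:Liu}.

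On the right, Theorem~\ref{thm:QuasiInverseCompatibility} gives $T_{\cris,\star}(\mathcal{M}^{(r)}_{\Es}\underline{\Em}^{(r)}(T))\simeq T^{(r)}_{\Es}(\underline{\Em}^{(r)}(T))$. By the definition of $\underline{\Em}^{(r)}$ and Proposition~\ref{prop:KisinBasicProperties}(2),(3), this equals $T_{\Es}(\underline{\Em}(T(-r)))(r)\simeq T|_{G_\infty}$, using that $\underline{\Em}$ is a quasi-inverse of $T_{\Es}$ on $\Rep^{\BK}_{\ZZ_p,\fr}(G_\infty)$. This step needs $\underline{\Em}^{(r)}(T)$ to have height $\leq r$, which is Proposition~\ref{prop:KisinBasicProperties}(3) applied to $T(-r)$, whose Hodge--Tate weights lie in $[-r,0]$.

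Composing the two identifications gives the required natural isomorphism of realizations, and the full faithfulness argument above lifts it to $U\mathcal{M}_{\st}(T)\simeq\mathcal{M}^{(r)}_{\Es}\underline{\Em}^{(r)}(T)$.

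The step I expect to need the most care is the $G_\infty$-compatibility of the identification in \eqref{st-to-cris-covariant}. The paper attributes this to Liu, so I will cite that and check that the dualization $\mathbb{D}^r_{\fr}$ preserves it. Everything else is bookkeeping of twists and duals.
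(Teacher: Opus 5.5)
Your proposal is correct and is essentially the paper's argument. Both compare the two sides through their $G_\infty$-realizations, computed via \eqref{diag1} and Theorem~\ref{thm:QuasiInverseCompatibility}, and conclude by full faithfulness of $T_{\Es}$ on free Kisin modules. The paper writes $U\mathcal{M}_{\st}(T)\simeq\mathcal{M}^{(r)}_{\Es}(\Em)$ and identifies $\Em$ with $\underline{\Em}^{(r)}(T)$ on the Kisin side, whereas you transport full faithfulness to $T_{\cris,\star}$ on the Breuil side; this is only a cosmetic difference, and your version makes the naturality in $T$ slightly more explicit.
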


\begin{proof}
Since $T_{\st, \star}(\mathcal{M}_{\st}(T))\simeq T$, the Breuil module $M=U \mathcal{M}_{\st}(T)$ satisfies $T_{\cris, \star}(M) \simeq T|_{G_{\infty}}$ by the discussion after Theorem~\ref{thm:Liu}. At the same time, $M\simeq \mathcal{M}_{\Es}^{(r)}(\Em)$ for some $\Em \in  \Mod_{\Es, \fr}^{\varphi, \leq r}$ by Proposition~\ref{prop:KisinToBreuilEquivalence}, and we have $T_{\Es}^{(r)}(\Em)\simeq T_{\cris, \star}(M) \simeq T|_{G_{\infty}}$ by Theorem~\ref{thm:QuasiInverseCompatibility}. By Proposition~\ref{prop:PropertiesOfTrMr}, this property characterizes $\underline{\Em}^{(r)}(T)$, therefore $\Em \simeq \underline{\Em}^{(r)}(T)$.
\end{proof}

Finally, we use the established equivalences to give a convenient classical explicit description of Tate twists in the context of Kisin modules (cf. the more abstract description in \cite[Example~4.2]{BMS1}).

\begin{example}[Breuil--Kisin twists]
Denote by $c_0$ the element $c_0=E(0)/p\in W^{\times}$. In order to ease the notation, let us in this example identify the Galois representation $\ZZ_{p}(s)$ with its restriction to $G_{\infty}$. 

Consider first the free Kisin module $\Es\{-1\}=\Es t \in \Mod_{\Es}^{\varphi, \leq 1}$, where $\varphi$ on $\Es\{-1\}$ is determined by the prescription $\varphi_{\Es\{-1\}}(t)=(E(u)c_0^{-1})t.$ Then we have $T_{\Es}(\Es\{-1\})=\ZZ_{p}(-1)$ or, equivalently, $T_{\Es}^{(1)}(\Es\{-1\})=\ZZ_{p}$. This follows e.g. from the fact that $\mathcal{M}_{\Es}(\Es\{-1\})=\mathbf{1} \in \Mod(S)_{r}^{\varphi},$ as is explicitly computed in \cite[Exemple 2.2.3]{BreuilUnpub}.

Similarly, setting $\Es\{1\}=\Es\{-1\}^{\vee},$ we have a Kisin module with $T_{\Es}(\Es\{1\})=\ZZ_p(-1)^{\vee}=\ZZ_p(1).$ For $r \in \NN, $ we define 
$$\Es\{-r\}=\Es\{-1\}^{\otimes r},\;\;\;\Es\{r\}=\Es\{1\}^{\otimes r}(\simeq \Es\{-r\}^{\vee})\,.$$
Then we have $\Es\{-r\} \in \Mod_{\Es}^{\varphi, \leq r}, $ and $\Es\{-r\}=\Es t$ with $\varphi_{\Es\{-r\}}(t)=(E(u)c_0^{-1})^{r}t.$ Moreover, since $T_{\Es}^{(r)}(\Es\{-r\})=\ZZ_p$,  Corollary~\ref{cor:CompatibleQuasiInverses} implies that $\mathcal{M}_{\Es}^{(r)}(\Es\{-r\})=\mathbf{1}$.
\end{example}

\vspace{5pt}

\section{Exactness Results}\label{sec:ExactnessResults}

\subsection{Some counterexamples to exactness}

\begin{example}\label{ex:KeyCounterexample}
Let $p>2$ be a prime and let $K=\mathbb{Q}_p[(-p)^{1/(p-1)}].$ Fix the uniformizer $\pi=(-p)^{1/(p-1)}$, and its minimal polynomial $E(u)=u^{p-1}+p$. We consider the category $\Mod_{\Es, \fr}^{\varphi, \leq 1}$. By Proposition~\ref{prop:KisinBasicProperties} (4), 
the category is, in fact, equivalent to $\Rep_{\ZZ_p,  \cris}^{[-1,0]} (G_K)$ via the functors $\Em$ and $T_{\Es}$. In particular, to give an example of a short exact sequence of crystalline $G_K$-lattices that becomes non-exact after applying $\Em,$ it is enough to produce a left exact sequence in $\Mod_{\Es, \fr}^{\varphi, \leq 1}$, not exact on the right, such that it becomes short exact after applying $T_{\Es}$.

To that end, consider the module $\Em=\Es e_1 \oplus \Es e_2$, where $\varphi_{\Em}$ is given by $$\varphi_{\Em}(e_1)=e_1, \;\; \varphi_{\Em}(e_2)=-ue_1+E(u)e_2\,.$$
Note that the Kisin module $\Es\{-1\}=\Es t$ has, in this setting, Frobenius given by $\varphi_{\Es \{-1\}} (t)=E(u)t$. In particular, we have the left exact sequence in $\Mod_{\Es, \fr}^{\varphi, \leq r}$:
\begin{equation}\label{eq:KisinCounterexampleSequence}
\begin{tikzcd}
0 \ar[r]& \Es\{-1\} \ar[r, "\alpha"] & \Em \ar[r, "\beta"] & \Es
\end{tikzcd}
\end{equation}
where $\alpha(t)=-ue_1+pe_2,\;\; \beta(e_1)=p, \;\; \beta(e_2)=u$. The remaining cokernel $\coker \beta$ is isomorphic to $\Es/(p, u)$ as an $\Es$-module, hence it vanishes after $T_{\Es}$. Consequently, the associated sequence
\begin{equation}\label{eq:GaloisCounterexampleSequence}
\begin{tikzcd}
0 \ar[r]& \ZZ_p(-1) \ar[r, "T_{\Es}(\alpha)"] & T_{\Es}(\Em) \ar[r, "T_{\Es}(\beta)"] & \ZZ_p \ar[r] & 0
\end{tikzcd}
\end{equation}
 in $\Rep_{\ZZ_p, \cris}^{[-1,0]} (G_K)$ is short exact.
 \end{example}

\begin{remark}
\begin{enumerate}
\item{Example~\ref{ex:KeyCounterexample} shows that the functor $\underline{\Em}: \Rep_{\ZZ_p, \cris}^{[-1, 0]}(G_K)\rightarrow \Mod_{\Es}^{\varphi}$ is not exact in general. Twisting the sequence of Galois lattices by $\ZZ_p(r)$ then yields an example showing that $\underline{\Em}^{(r)}: \Rep_{\ZZ_p,  \cris}^{[0, r]}(G_K)\rightarrow \Mod_{\Es}^{\varphi}$ is also in general non-exact (for every $r \geq 1$). 
}
\item{Example~\ref{ex:KeyCounterexample} is a slight modification of Liu's example \cite[Example~2.21]{LiuLatticesPhiN}, constructed with a different aim. The modifications of the base field $K$ and of the operator $\varphi_{\Em}$ are made to demonstrate that the extreme terms of the sequence may be taken in the form that is obviously ``Fontaine-Laffaille'', i.e., the associated crystalline $\ZZ_p[G_K]$-lattices $\ZZ_p(-1)$ and $ \ZZ_p$ come from crystalline $\ZZ_p[G_{K_0}]$-lattices by restriction of action (again given by $\ZZ_p(-1)$ and $ \ZZ_p$ in our case). Given that the functor from crystalline $\ZZ_p[G_{K_0}]$-lattices with Hodge-Tate weights in $[0, r]$ to strongly divisible modules in the sense of Fontaine--Laffaille is exact, the example can be interpreted as follows: the lattice $T_{\Es}^{(1)}(\Em)$ is a crystalline $G_K$-lattice whose action cannot be extended to $G_{K_0}$ while maintaining crystallinity.}

\item{As noted earlier, $\Mod_{\Es}^{\varphi, \leq 1}$ is equivalent to the category $p\text{-}\mathsf{Div}_{\oh_K}$ of $p$-divisible groups over $\oh_K$ \cite{Kisin1, LiuBT}. Moreover, in the subsequent text (Theorem~\ref{thm:KisinToBreuilIsBiExact}), we will prove that the exact functor $\mathcal{M}_{\Es}^{(1)}$ from Proposition \ref{prop:KisinToBreuilEquivalence} has exact quasi-inverse. Combining this with \cite[Theor\'{e}me 4.2.1.6]{Breuil00} yields that the equivalence functor $\Mod_{\Es}^{\varphi, \leq 1} \stackrel{\sim}\longrightarrow p\text{-}\mathsf{Div}_{\oh_K}$ is an equivalence of exact categories (at least when $p>2$). Example~\ref{ex:KeyCounterexample} can therefore be reinterpreted as giving a $3$-term complex of $p$-divisible groups that is not short exact, yet the induced sequence of their Tate modules is short exact. This is not too surprising in view of the fact that the Tate module depends only on the generic fiber of the given $p$-divisible group, and it has been observed previously, cf. \cite[Remark~5.2.9]{Scholze_Weinstein}.}
\end{enumerate}
\end{remark}

\begin{corollary}\label{cor:non-exactness}
The quasi-inverse $\mathcal{M}_{\st}: \Rep_{\ZZ_p,  \cris}^{[0, r]}(G_K)\rightarrow \Mod(S)_{r}^{\varphi, N}$ to the functor $T_{\st, \star}$ is in general not left exact. More precisely, there exists a base field $K/\mathbb{Q}_p$ such that for each $r$ with $1\leq r <p-1$, there is a short exact sequence ($\tau$) in $\Rep_{\ZZ_p,  \cris}^{[0, r]}(G_K)$ such that $\mathcal{M}_{\st}(\tau)$ is not even left exact as a sequence of abstract $S$-modules.
\end{corollary}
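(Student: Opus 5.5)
We take $K$ and the sequence from Example~\ref{ex:KeyCounterexample}, Tate-twist it so the Hodge--Tate weights land in $[0,r]$, and trace it through $\mathcal{M}_{\Es}^{(r)}$ to the Breuil side. Let $p>2$, $K=\QQ_p[(-p)^{1/(p-1)}]$, and let $1\leq r<p-1$.

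\textbf{Step 1: the lattice sequence.} Twist \eqref{eq:GaloisCounterexampleSequence} by $\ZZ_p(r)$. This gives a short exact sequence
\[0\to \ZZ_p(r-1)\to L\to \ZZ_p(r)\to 0\]
in $\Rep_{\ZZ_p,\cris}^{[0,r]}(G_K)$, where $L=T_{\Es}(\Em)(r)$ is viewed as a $G_K$-lattice via Proposition~\ref{prop:KisinBasicProperties}(4). The outer terms have weights $r-1,r\in[0,r]$. The middle term is an extension of two crystalline lattices with weights in $[-1,0]$ before twisting. It is crystalline because the category is closed under extensions, or directly because it corresponds to a height $\leq1$ Kisin module. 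This sequence is our $(\tau)$.

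\textbf{Step 2: identify the Kisin modules.} We compute $\underline{\Em}^{(r)}$ of each term, up to isomorphism. Tensoring \eqref{eq:KisinCounterexampleSequence} with $\Es\{-(r-1)\}$ gives a sequence
\[0\to \Es\{-r\}\xrightarrow{\alpha'} \Em\otimes\Es\{-(r-1)\}\xrightarrow{\beta'}\Es\{-(r-1)\}.\]
Tensoring with a free rank one module preserves freeness and the left exactness, and the cokernel is again $\Es/(p,u)$. All three terms have height $\leq r$: the middle one because heights add under $\otimes$ and $\Em$ has height $\leq1$. By Proposition~\ref{prop:PropertiesOfT}(2) and the twist example, applying $T^{(r)}_{\Es}$ gives exactly the twisted $(\tau)$ restricted to $G_\infty$. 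By the characterization in Proposition~\ref{prop:PropertiesOfTrMr}(5) and full faithfulness, $\underline{\Em}^{(r)}(\tau)$ is isomorphic, as a diagram, to this Kisin sequence. Corollary~\ref{cor:CompatibleQuasiInverses} then identifies $U\mathcal{M}_{\st}(\tau)$ with $\mathcal{M}_{\Es}^{(r)}$ applied to it. On underlying $S$-modules this is
\[S\otimes_{\Es}\varphi^*(-).\]

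\textbf{Step 3: failure of left exactness (main obstacle).} It remains to check that after $S\otimes_{\Es}\varphi^*$ the first map is no longer injective. This is where the non-flatness of $S$ from Remark~\ref{rem:SNotFlat} enters.

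In the bases $t$, $e_i\otimes t'$, $t'$ we have $\alpha'(t)=-ue_1+pe_2$ and $\beta'=(p,u)$. After $\varphi^*$ these become $(-u^p,p)$ and $(p,u^p)$. Consider the element $x=(p-1)!\,E(u)^p/p!$ of $S$. Since $E(u)^p=px$ and $E(u)=u^{p-1}+p$, the element $u^p$ satisfies a relation modulo $p$ in $S$. We look for $s\in S$ with $su^p\in pS$ but $s\notin pS$, as a candidate kernel element of $1\otimes\varphi^*\alpha'$. But injectivity fails only if some $s\neq0$ has $su^p=0$ and $sp=0$ simultaneously, and $S$ is torsion free, so the map $S\to S^2$ given by $s\mapsto(-u^ps,ps)$ is injective.

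So left exactness must instead fail at the middle term. We will show that the kernel of $S\otimes\varphi^*\beta'$ is strictly larger than the image of $\alpha'$. Concretely, we exhibit a pair $(a,b)\in S^2$ with $pa+u^pb=0$ that is not an $S$-multiple of $(-u^p,p)$. Since $u^p\cdot u^{p(e'-1)}\in pS$ for $e=p-1$, and using the divided power $u^{p(p-1)}/p\in S$, the pair $(a,b)=(-u^{p(p-1)}/p,\;u^{p(p-2)}\cdot\text{unit})$ is the natural candidate. We must verify that it is not a multiple of $(-u^p,p)$, which we do by reducing modulo $(p,\Fil^p)$ or by comparing $u$-adic valuations in $K_0[[u]]$. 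This valuation bookkeeping in $S\subset K_0[[u]]$ is the step I expect to need the most care.

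Finally, since $\mathcal{M}_{\st}$ is only fixed up to isomorphism, non-exactness in the middle is independent of the choices. Indeed, an isomorphic diagram of an exact sequence is exact, so the conclusion transfers to $\mathcal{M}_{\st}(\tau)$.
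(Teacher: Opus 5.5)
Your route matches the paper's up to the last step. You use the same $K$ and the same twisted sequence, and you apply Corollary~\ref{cor:CompatibleQuasiInverses} to reduce to $S\otimes_{\Es}\varphi^*$ of the Kisin sequence. You also place the failure in the middle term: the first map stays injective, which is what the paper's vanishing of $\Tor_1^{\Es}(S,C)$ shows.

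The difference is in the final computation. The paper identifies the middle cohomology with $\Tor_1^{\Es}(S,\Es/(p,u^p))$ via two long exact sequences, and shows this is nonzero because it surjects onto $\ker(u^p\colon S/p\to S/p)$. You instead write down a cycle in what is exactly the Koszul complex of $(p,u^p)$ over $S$. So both arguments compute the same group: yours is more concrete, and the paper's additionally gives non-exactness at the last term, which is not needed. Both rest on the fact that $u^{p(p-1)}\in pS$ but $u^{p(p-2)}\notin pS$.

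Three things need fixing.

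\textbf{The twist in Step~2 is off.} If $\tau$ is \eqref{eq:GaloisCounterexampleSequence} tensored with $\ZZ_p(r)$, then $\underline{\Em}^{(r)}(\tau)=\underline{\Em}(\tau(-r))$ is the untwisted sequence \eqref{eq:KisinCounterexampleSequence} itself. For instance, $T^{(r)}_{\Es}(\Es\{-r\})=\ZZ_p$, not $\ZZ_p(r-1)$. Your tensored sequence is $\underline{\Em}^{(r)}$ of the twist by $\ZZ_p(1)$, so for $r\geq 2$ your claim that it realizes to $\tau$ is false. Either drop the tensoring or twist the Galois sequence by $\ZZ_p(1)$ instead. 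Step~3 is unaffected, since tensoring with a free rank-one module does not change the matrices of the maps.

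\textbf{Crystalline lattices are not closed under extensions.} The paper itself stresses this. Only your second justification, via Proposition~\ref{prop:KisinBasicProperties}(4), is valid.

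\textbf{The verification you defer is short, and it concerns $p$-integrality of one coefficient, not $u$-adic valuations.}
\begin{itemize}
\item Take $b=u^{p(p-2)}$ exactly (the unit must be $1$ for $pa+u^pb=0$), and $a=-u^{p(p-1)}/p=-(p-1)!\,u^{p(p-1)}/p!$, which lies in $S$.
\item If $(a,b)=s(-u^p,p)$ with $s\in S$, then working in the domain $K_0[[u]]$ forces $s=u^{p(p-2)}/p$.
\item Since $\lfloor p(p-2)/(p-1)\rfloor=p-2$ and $(p-2)!\in\ZZ_p^{\times}$, the $u^{p(p-2)}$-coefficient of every element of $S$ lies in $W$.
\item The coefficient of $u^{p(p-2)}/p$ is $1/p\notin W$, a contradiction.
\end{itemize}
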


\begin{proof}
Let the field $K$ be the same field as in Example~\ref{ex:KeyCounterexample}, and consider the sequence of Galois lattices ($\tau$) obtained from (\ref{eq:GaloisCounterexampleSequence}) by 
tensoring with $\ZZ_p(r)$. 
Then $\underline{\Em}^{(r)}(\tau)$ is again the left exact sequence of Kisin modules (\ref{eq:KisinCounterexampleSequence}) as in Example~\ref{ex:KeyCounterexample} (except viewed in the bigger category $\Mod_{\Es, \fr}^{\varphi, \leq r}$). Corollary~\ref{cor:CompatibleQuasiInverses} implies that $U\mathcal{M}_{\st}(\tau)$ is obtained from $\underline{\Em}^{(r)}(\tau)$ by applying the functor $\mathcal{M}_{\Es}^{(r)}$. 

Extending the sequence $\underline{\Em}^{(r)}(\tau)$ by the remaining cokernel, we have the sequence
\begin{center}
\begin{tikzcd}
0 \ar[r]& \Es\{-1\} \ar[r] & \Em \ar[r] & \Es \ar[r] & \Es/(p, u) \ar[r]&0,  
\end{tikzcd}
\end{center}
and after the flat base-change $\Es\otimes_{\Es, \varphi} -$, we obtain the exact sequence of $\Es$-modules
\begin{center}
\begin{tikzcd}[column sep = small]
0 \ar[rr]& & \varphi^*\Es\{-1\} \ar[rr] && \varphi^*\Em \ar[dr]\ar[rr] && \varphi^*\Es \ar[rr] && \varphi^*(\Es/(p, u)) \ar[rr]&&0\\
&&&&& C \ar[dr]\ar[ur] &&&&&\\
&&&& 0 \ar[ur]&& 0 &&&&
\end{tikzcd}
\end{center}
where $C$ is the intermediate kernel/cokernel. The first three terms are finite free $\Es$-modules and for the last term, we have $\widetilde{k}:=\varphi^*(\Es/(p, u))=\Es/(p, u^p)$ as an $\Es$-module. 
Applying $S\otimes_{\Es}-$ to the first short exact sequence yields the long exact sequence
\begin{center}
\begin{tikzcd}
0 \ar[r]& \Tor_1^{\Es}(S, C)\ar[r] & S\otimes_{\Es} \varphi^*\Es\{-1\} \ar[r] & S\otimes_{\Es} \varphi^*\Em \ar[r] & S\otimes_{\Es}C \ar[r]&0,  
\end{tikzcd}
\end{center}
and from the second sequence, we likewise obtain the long exact sequence
\begin{center}
\begin{tikzcd}
0 \ar[r]& \Tor_2^{\Es}(S, \widetilde{k}) \ar[r]& \Tor_1^{\Es}(S, C) \ar[r]
& 0 \ar[r]
\ar[d, phantom, ""{coordinate, name=Z}]
& \Tor_1^{\Es}(S, \widetilde{k})\ar[dlll,
rounded corners,
to path={ -- ([xshift=2ex]\tikztostart.east)
|- (Z) [near end]\tikztonodes
-| ([xshift=-2ex]\tikztotarget.west)
-- (\tikztotarget)}] \\
&S\otimes_{\Es}C \ar[r]
& S\otimes_{\Es}\varphi^*\Es \ar[r]
& S\otimes_{\Es}\widetilde{k} \ar[r]& 0\,.
\end{tikzcd}
\end{center}
Since $\widetilde{k}$ is $p$-torsion, so is $\Tor_2^{\Es}(S, \widetilde{k})$ and hence also $\Tor_1^{\Es}(S, C)$; on the other hand, the latter is a submodule of the free $S$-module $S \otimes_{\Es}\varphi^*\Es\{-1\}$ and thus, we have $\Tor_1^{\Es}(S, C)=0$. Consequently, the first long exact sequence is just short exact since the first term vanishes. 

On the other hand, $\Tor_1^{\Es}(S, \widetilde{k})\neq 0$, since it similarly surjects on the nonzero kernel in the right exact sequence 
\begin{center}
\begin{tikzcd}
S/p \ar[r, "u^p"] & S/p \ar[r] & S \otimes_{\Es}\widetilde{k} \ar[r]& 0,
\end{tikzcd}
\end{center}
obtained from the short exact sequence of $\Es$-modules
\begin{center}
\begin{tikzcd}
0 \ar[r]&\Es/p \ar[r, "u^p"] & \Es/p \ar[r] & \widetilde{k} \ar[r]& 0
\end{tikzcd}
\end{center}
by the base change $S\otimes_{\Es}-$.

Thus, we see that the complex 
\begin{center}
\begin{tikzcd}
0\ar[r] & S\otimes_{\Es}\varphi^*\Es\{-1\} \ar[r] & S\otimes_{\Es}\varphi^*\Em \ar[r] & S\otimes_{\Es}\varphi^*\Es \ar[r] & 0
\end{tikzcd}
\end{center}
obtained from $\underline{\Em}^{(r)}(\mu)$ by the functor $\mathcal{M}_{\Es}^{(r)}$, has nonzero cohomology in the middle term $(\simeq \Tor_1^{\Es}(S, \widetilde{k})\neq 0)$ and in the next term ($\simeq S\otimes_{\Es}\widetilde{k}\neq 0$, cf. Remark~\ref{rem:SNotFlat}). In particular, the sequence is not left exact.
\end{proof}

\subsection{Exactness results for $1$-extensions}

We now give some positive results. We start with the fact that unlike for Breuil modules, the Kisin functor is left exact. This is the content of \cite[Lemma~2.19]{LiuLatticesPhiN}, stated in the contravariant version and for the case of the category $\Mod_{\Es}^{\varphi, \leq r}$. Rather than dualizing the functor, we give an alternative ``geometric'' proof (akin to extending a vector bundle over a puncture in a plane) that applies directly in the case of covariant functors.
Recall that $\Rep_{\ZZ_p, \fr}^{\BK}(G_{\infty})$ denotes the essential image of the functor $T_{\Es}$ restricted to free Kisin modules. 

\begin{proposition}\label{prop:LeftExactKisinModules}
For every integer $r \in \ZZ$, the functor $\underline{\Em}^{(r)}:\Rep_{\ZZ_p, \fr}^{\BK}(G_{\infty})\rightarrow  \Mod_{\Es, \fr}^{\varphi}$ is left exact. That is, given a short exact sequence 
\begin{equation}\tag{$\tau$}
\begin{tikzcd}
0\ar[r]& T' \ar[r] & T \ar[r] & T'' \ar[r] & 0
\end{tikzcd}
\end{equation}
in $\Rep_{\ZZ_p, \fr}^{\BK}(G_{\infty})$, the corresponding sequence of Kisin modules
\begin{center}
\begin{tikzcd}
0\ar[r]& \Em' \ar[r, "\alpha"] & \Em \ar[r, "\beta"] & \Em'' 
\end{tikzcd}
\end{center}
obtained by applying $\underline{\Em}^{(r)}$ is left exact as a sequence of $\Es$-modules. Moreover, $\coker \beta$ is annihilated by $(p, u)^k$ for big enough $k$.
\end{proposition}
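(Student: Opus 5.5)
Since twisting by $\ZZ_p(-r)$ is an exact autoequivalence of $\Rep_{\ZZ_p,\fr}^{\BK}(G_\infty)$ (the essential image is stable under Breuil--Kisin twists, as $T_{\Es}(\Em\{s\})\simeq T_{\Es}(\Em)(s)$), it suffices to treat $r=0$, i.e. the functor $\underline{\Em}$ itself. So let $\Em',\Em,\Em''$ be free Kisin modules with $T_{\Es}$ giving $(\tau)$, and let $\alpha,\beta$ be the morphisms corresponding to the maps in $(\tau)$ under full faithfulness (Proposition~\ref{prop:PropertiesOfT}(3)). Since $\beta\alpha$ maps to $0$, we get $\beta\alpha=0$ by faithfulness. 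The plan is to compare everything inside the abelian category $\Mod_{\Es}^{\varphi}$, where $T_{\Es}$ is exact (Proposition~\ref{prop:PropertiesOfT}(1)) and its kernel consists of $u^\infty$-torsion modules (Proposition~\ref{prop:KerOfTisUTorsion}).

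\emph{Injectivity of $\alpha$.} $\ker\alpha$ is a Kisin module (kernels in $\Mod_{\Es}^\varphi$ are computed on underlying modules) and $T_{\Es}(\ker\alpha)=\ker T_{\Es}(\alpha)=0$, so $\ker\alpha$ is $u^\infty$-torsion. But $\ker\alpha\subseteq\Em'$, which is free over the domain $\Es$, so $\ker\alpha=0$.

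\emph{Exactness in the middle.} Consider $H=\ker\beta/\img\alpha$, a Kisin module with $T_{\Es}(H)=0$ by exactness of $T_{\Es}$ and of $(\tau)$; hence $u^NH=0$, and by Lemma~\ref{lem:UtorIsPtor} $H$ is killed by $(p,u)^k$. I want to show $H=0$; this is the key step, the ``extension over a puncture'' argument. First, $\ker\beta$ is a submodule of the free module $\Em$ with $\Em/\ker\beta\hookrightarrow\Em''$ torsion-free, so $\ker\beta$ is saturated; since $\Es$ is a $2$-dimensional regular local ring, $\ker\beta$ is reflexive, hence free. Next, $\img\alpha\simeq\Em'$ is free and $\ker\beta/\img\alpha$ is of finite length. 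A finite-length quotient of two free (reflexive) modules of equal rank over a 2-dimensional regular local ring, with the inclusion an isomorphism in codimension $1$, forces equality: indeed $\Em'=\bigcap_{\mathfrak p\text{ height }1}\Em'_{\mathfrak p}$ and likewise for $\ker\beta$, and localizations at height-one primes agree since $H_{\mathfrak p}=0$ (no height-one prime contains $(p,u)$). Equivalently, $\Ext^1_{\Es}(H,\Em')=0$ by depth $2$, so the extension $0\to\Em'\to\ker\beta\to H\to0$ splits, and a finite-length direct summand of a free module vanishes. Thus $\img\alpha=\ker\beta$.

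\emph{The cokernel.} $\coker\beta$ is a Kisin module with $T_{\Es}(\coker\beta)=\coker T_{\Es}(\beta)=0$, so $u^N\coker\beta=0$, and Lemma~\ref{lem:UtorIsPtor} gives $(p,u)^k\coker\beta=0$. The main point requiring care is the middle step, specifically checking that the reflexive-hull/depth argument is compatible with viewing $H$ as a Kisin module (only its underlying $\Es$-module matters there, so this should be harmless); everything else is formal from the cited results.
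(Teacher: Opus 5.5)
Your proof is correct, but it carries out the ``extension over the puncture'' idea differently from the paper.

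\textbf{Your route.} You treat each cohomology group of $0\to\Em'\to\Em\to\Em''$ separately:
\begin{itemize}
\item Injectivity of $\alpha$ follows from torsion-freeness of $\Em'$.
\item Exactness in the middle follows because $H=\ker\beta/\img\alpha$ has finite length while $\Em'$ has depth $2$ over the regular local ring $\Es$. Hence $\Ext^1_{\Es}(H,\Em')=0$, so $H$ splits off inside $\ker\beta$ and must vanish.
\end{itemize}
For this last step you do not need $\ker\beta$ to be free. It suffices that $\ker\beta\subseteq\Em$ is torsion-free while $H$ is killed by $(p,u)^k$, so the reflexivity discussion can be dropped.

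\textbf{The paper's route.} The paper argues functorially instead. Since all cohomology of the complex $(\mu)$ is killed by a power of $(p,u)$, the three functors $T\mapsto U\underline{\Em}(T)[1/u]$, $U\underline{\Em}(T)[1/p]$, $U\underline{\Em}(T)[1/pu]$ are exact. Every finite free $\Es$-module satisfies $F=F[1/u]\cap F[1/p]$, so $U\underline{\Em}$ is the fiber product of an exact cospan-valued functor. It is therefore left exact, as a limit.

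\textbf{Comparison.} The paper's argument avoids depth and $\Ext$ theory entirely; it uses only the elementary identity $F=F[1/u]\cap F[1/p]$. It also obtains injectivity and middle exactness in one stroke. Your argument makes explicit that the only inputs are ``cohomology supported at the closed point'' and ``terms of depth $\ge 2$.'' This is the commutative-algebra form of the same Hartogs principle, and it would transfer verbatim to other two-dimensional regular local bases.
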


\begin{proof}
 Since the functors $\underline{\Em}^{(r)}, \underline{\Em}^{(0)}=\underline{\Em}$ are obtained from one another by precomposition with the exact functor $-\otimes_{\ZZ_p}\ZZ_{p}(\pm r),$ it suffices to prove the claim for $\underline{\Em}$. It is further enough to show exactness for $U\underline{\Em}$ where $U$ is the (exactness-preserving and -reflecting) forgetful functor $\Mod_{\Es}^{\varphi} \rightarrow \Mod_{\Es}$ to the abelian category of abstract $\Es$-modules. 

The complex
\begin{equation}\tag{$\mu$}
\begin{tikzcd}
0\ar[r]& \Em' \ar[r, "\alpha"] & \Em \ar[r, "\beta"] & \Em'' \ar[r]& 0, 
\end{tikzcd}
\end{equation}
obtained by applying $\underline{\Em}$ to ($\tau$), recovers the short exact sequence ($\tau$) by applying the exact functor $T_{\Es}$. By Proposition~\ref{prop:KerOfTisUTorsion} and Lemma~\ref{lem:UtorIsPtor}, the cohomologies of ($\mu$) are Kisin modules annihilated by a power of $(p, u)$ (which, in particular, proves the ``Moreover'' claim). Consequently, the functors 
\begin{align*}
 T \mapsto U\underline{\Em}(T)[1/u],\quad
 T \mapsto U\underline{\Em}(T)[1/p],\quad
 T \mapsto U\underline{\Em}(T)[1/pu]
\end{align*} 
are exact functors from $\Rep_{\ZZ_p, \fr}^{\BK}(G_K)$ to $\Mod_{\Es}$. In total, we have an exact functor 
\begin{align*}
\underline{\Em}^{\rightarrow \leftarrow}: \Rep_{\ZZ_p, \fr}^{\BK}(G_{\infty})& \longrightarrow \Mod_{\Es}^{\rightarrow \leftarrow},\\ 
 T& \longmapsto \left(U\underline{\Em}(T)[1/u] \stackrel{\subseteq}{\rightarrow} U\underline{\Em}(T)[1/pu] \stackrel{\supseteq}{\leftarrow}U\underline{\Em}(T)[1/p] \right)
\end{align*}
where $\Mod_{\Es}^{\rightarrow \leftarrow}$ denotes the (diagram) category of cospans of $\Es$-modules. Since every finite free $\Es$-module $F$ satisfies $F=F[1/u] \cap F[1/p]$ (with intersection taken in $F[1/pu]$) and $U\underline{\Em}$ takes values in finite free $\Es$-modules, it follows that $U\underline{\Em} $ is naturally equivalent to the composite 
$$\underline{\Em}:\Rep_{\ZZ_p, \fr}^{\BK}(G_{\infty})\stackrel{\underline{\Em}^{\rightarrow \leftarrow}}\longrightarrow \Mod_{\Es}^{\rightarrow \leftarrow} \stackrel{\lim}\longrightarrow \Mod_{\Es},$$
where $\lim$ is the fiber product functor. Since this functor is left exact (in the usual sense), it follows that $U\underline{\Em},$ hence $\underline{\Em}$, is left exact in the sense of the proposition. 
\end{proof}

 For the purposes of the following lemma, let us set $p^{\infty}$ to be $0$ (and, of course, $p^0=1$). Let us denote by $v_p$ the $p$--adic valuation on $W,$ written additively (this way, $v_p(p^n)=n$ is valid for all $n \in \mathbb{N}\cup \{\infty\}$).

\begin{lemma}\label{lem:KeyLemma}
Let $I \subseteq \Es$ be an ideal with the property $\Es\varphi(I) \supseteq I$. Then $I=(p^n)$ for some $n \in \NN \cup \{\infty\}$.
\end{lemma}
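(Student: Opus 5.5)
The plan is to first strip off the largest power of $p$ and then show that what remains is the unit ideal. If $I=0$ we take $n=\infty$. Otherwise, since $\Es=W[[u]]$ is a UFD and $\bigcap_k p^k\Es=0$, there is a largest $n\in\NN$ with $I\subseteq p^n\Es$. Set $J=p^{-n}I$. Because $\varphi(p^n)=p^n$, we get
\[\Es\varphi(I)=p^n\,\Es\varphi(J).\]
Since $p$ is a nonzerodivisor, the hypothesis on $I$ transfers to $J$: we have $\Es\varphi(J)\supseteq J$. Moreover $J\not\subseteq p\Es$ by maximality of $n$. It therefore suffices to show that $J=\Es$.

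For this, I would reduce modulo $p$. Let $\bar J$ be the image of $J$ in $\Es/p=k[[u]]$. Then $\bar J\neq 0$, so $\bar J=(u^m)$ for some $m\geq 0$, since $k[[u]]$ is a DVR. Modulo $p$, $\varphi$ is the map $f(u)\mapsto f^{\sigma}(u^p)$, where $\sigma$ is the Frobenius of $k$ applied to the coefficients. It sends an element of $u$-adic valuation $v$ to one of valuation $pv$, so
\[\overline{\Es\varphi(J)}\subseteq (u^{pm}).\]
The hypothesis gives $(u^m)=\bar J\subseteq \overline{\Es\varphi(J)}\subseteq (u^{pm})$, which forces $m\geq pm$, i.e. $m=0$. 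Hence $J$ contains an element whose reduction mod $p$ is a unit of $k[[u]]$. Since $\Es$ is local with maximal ideal $(p,u)$, such an element is a unit of $\Es$, and so $J=\Es$ and $I=(p^n)$.

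The one step that needs care is the inequality $\overline{\Es\varphi(J)}\subseteq (u^{pm})$. Every element of $\Es\varphi(J)$ is a finite $\Es$-linear combination $\sum s_i\varphi(j_i)$ with $j_i\in J$. Each $\bar j_i$ lies in $(u^m)$, so each $\overline{\varphi(j_i)}=\bar j_i^{\,\sigma}(u^p)$ lies in $(u^{pm})$, and therefore so does the combination. Everything else is routine.
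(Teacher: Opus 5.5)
Your proof is correct, and it takes a different route from the paper's. The paper argues by contradiction:
\begin{itemize}
\item it takes $n$ minimal with $p^n\in I$;
\item it picks an element of $I\setminus(p^n)$ with a coefficient of $p$-adic valuation $m<n$, with the pair (valuation $m$, index $j$) chosen lexicographically minimal;
\item it shows $j>0$;
\item it specializes along $\Es\to\Es/(u^N-p)\simeq W[p^{1/N}]$ with $N>jp$. There $I$ maps onto $(p^{m+j/N})$ while $\Es\varphi(I)$ maps into the strictly smaller ideal $(p^{m+jp/N})$, a contradiction.
\end{itemize}

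The invariants are the same as yours. The paper's $m$ is your $n$, and the paper's $j$ is the $u$-adic order of $\bar J$, i.e.\ your $m$. Because the paper never divides out the power of $p$, it uses the specialization with large $N$ to combine $(m,j)$ lexicographically into the single valuation $m+j/N$.

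Your normalization $J=p^{-n}I$ uses only that $p$ is a $\varphi$-fixed nonzerodivisor. It separates the $p$-adic data from the $u$-adic data, so the latter can be read off in the DVR $k[[u]]$, where Frobenius multiplies valuations by $p$. This avoids the auxiliary ring $W[p^{1/N}]$, the choice of $N$, and the term-by-term valuation estimates, and it gives a direct argument rather than one by contradiction.

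Your argument also uses nothing about $\varphi$ beyond $\varphi(p)=p$ and $\varphi(x)\equiv x^p \bmod p$. So it applies verbatim to any Frobenius lift on $\Es$, not only the one with $u\mapsto u^p$.
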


\begin{proof}
Consider the smallest $n \in \NN \cup \{\infty\}$ such that $p^n \in I$. 
Let us show that $I=(p^n)$. Suppose the contrary, i.e., that there is a power series $f(u)=\sum_i a_i u^i$ with $ f(u) \in I \setminus (p^n)$. In particular, some of the terms of $f(u)$ are not divisible by $p^n$, that is, $v_p(a_j)=m<n$ for some $j, m\in \mathbb{N}$. Among all the possible choices of $(f(u), m, j),$ we make the choice so that:
\begin{enumerate}
\item $m$ is the smallest possible, 
\item among all possible choices of $(f(u), j)$ with the $m$ as in (1), we make the choice so that $j$ is the smallest possible. 
\end{enumerate} 

First, we claim that $j>0$. Indeed, $j=0$ would imply that $f(u)$ is of the form $$f(u)=\widetilde{a_0}p^m+\sum_{i\geq 1}\widetilde{a_i}p^mu^i=p^m(\widetilde{a_0}+\sum_{i\geq 1} \widetilde{a_i}u^i),$$
where $\widetilde{a_0} \in W^{\times}$ and $\widetilde{a_i} \in W$ for each $i$. But the expression in the bracket is a unit since $\widetilde{a_0}$ is a unit and $u \in \mathrm{rad}(\Es),$ the Jacobson radical of $\Es$. 
This means that $p^m \in I$ with $m<n,$ contradicting the choice of $n$.  

With this setup, let $N$ be an integer big enough so that $jp/N<1$ (e.g., $N=pj+1$ works). Under the surjective homomorphism
\begin{align*}
\Es & \rightarrow \Es/(u^N-p)\simeq W[p^{1/N}],\;\; u \longmapsto p^{1/N}, 
\end{align*}
one easily sees that $I$ is mapped onto the ideal $(p^{m+j/N})$ (with $f(u)$ mapped to an element of the form $p^{m+j/N}\cdot(\text{unit})$). On the other hand, the ideal $\Es \varphi(I)$ is mapped onto the ideal $(p^{m+jp/N})$, since for every $g(u)\in I,$ the $p$-adic valuation of all the $W$-coefficients of $\varphi(g(u))$ remains the same but all the powers of the $u$-terms are multiplied by $p$ (and, once again, $\varphi(f(u))$ is mapped to an element of the form $p^{m+jp/N}\cdot(\text{unit})$). But now the strict inclusion $(p^{m+jp/N}) \subsetneq (p^{m+j/N})$ is in contradiction with $\Es\varphi(I)\supseteq I$. 

This shows that no such power series $f(u)\in I \setminus (p^n)$ can exist. Thus, $I=(p^n).$   
\end{proof}

We can now proceed with the proof of the main theorem. 
Let us use the notation $\Rep_{\ZZ_p, \fr}^{\BK, [0, r]}(G_{\infty})$ for the essential image of $\Mod_{\Es, \fr}^{\varphi, \leq r}$ under $T_{\Es}^{(r)}$.  

\begin{theorem}\label{thm:ExactKisinModules}
Fix an integer $r\in \NN$. Given a short exact sequence in $\Rep_{\ZZ_p, \fr}^{\BK, [0, r]}(G_{\infty})$ of the form 
\begin{center}
\begin{tikzcd}
0\ar[r]& T \ar[r] & L \ar[r] & \ZZ_p \ar[r] & 0\,,
\end{tikzcd}
\end{center}
where $\ZZ_p$ is the rank $1$ trivial Galois module, the corresponding sequence of Kisin modules
\begin{center}
\begin{tikzcd}
0\ar[r]&\underline{\Em}^{(r)}(T) \ar[r, "\alpha"] & \underline{\Em}^{(r)}(L) \ar[r, "\beta"] & \underline{\Em}^{(r)}(\ZZ_p) \ar[r] & 0
\end{tikzcd}
\end{center}
is also short exact.
\end{theorem}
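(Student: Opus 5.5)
By Proposition~\ref{prop:LeftExactKisinModules}, the sequence is left exact and $\coker\beta$ is annihilated by $(p,u)^k$ for some $k$. It therefore only remains to show that $\beta$ is surjective. First I will identify $\underline{\Em}^{(r)}(\ZZ_p)$. It is the unique free Kisin module of rank one with $T^{(r)}_{\Es}=\ZZ_p$, so it is $\Es\{-r\}=\Es t$ with $\varphi(t)=(E(u)c_0^{-1})^r t$. The image of $\beta$ is a $\varphi$-stable $\Es$-submodule of $\Es t$, so it has the form $It$ for an ideal $I\subseteq\Es$. Moreover $I$ contains a power of $(p,u)$, since the cokernel is killed by $(p,u)^k$. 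The goal is to show $I=\Es$.

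Next I will extract a Frobenius condition on $I$ and apply Lemma~\ref{lem:KeyLemma}. Write $\Em=\underline{\Em}^{(r)}(L)$. Because $\Em$ has height $\le r$, the $\Es$-span of $\varphi_{\Em}(\Em)$ contains $E(u)^r\Em$. Applying $\beta$ gives
\[
E(u)^r I t \;=\; \beta(E(u)^r\Em)\;\subseteq\; \Es\,\beta(\varphi_{\Em}(\Em)) \;=\; \Es\,\varphi(\beta(\Em)) \;=\; \Es\,\varphi(I)\,E(u)^r t .
\]
Here the last step uses $\varphi(ft)=\varphi(f)(E(u)c_0^{-1})^r t$ with $c_0$ a unit. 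Since $\Es$ is a domain, we can cancel $E(u)^r$ and obtain $I\subseteq\Es\varphi(I)$. Lemma~\ref{lem:KeyLemma} then gives $I=(p^n)$ for some $n\in\NN\cup\{\infty\}$. On the other hand $I\supseteq(p,u)^k$, so $I$ contains $u^k$. But $u^k\in(p^n)$ forces $n=0$, so $I=\Es$ and $\beta$ is surjective. This is where the hypothesis that the quotient is the trivial $\ZZ_p$, rather than $\ZZ_p(-1)$, is used: the height computation needs the Frobenius on the target to be exactly $E(u)^r$ times a unit. Example~\ref{ex:KeyCounterexample} shows the argument breaks otherwise.

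The main obstacle is to make sure the cancellation is legitimate, which comes down to two points. The first is that the image of $\beta$ really is $It$ with $\varphi$ acting through multiplication by $E(u)^r$ up to a unit. The second is that the height-$\le r$ property of the middle term passes correctly to the image. Both are short computations on the rank-one module, and no exactness of base change is required, since everything happens inside $\Es$.
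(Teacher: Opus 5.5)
Your proof is correct and follows the paper's argument: left exactness comes from Proposition~\ref{prop:LeftExactKisinModules}, the height-$\leq r$ condition transported to $\operatorname{im}\beta=It\subseteq\Es\{-r\}$ gives $I\subseteq\Es\varphi(I)$, and Lemma~\ref{lem:KeyLemma} then forces $I=(p^n)$. The only difference is the last step: you rule out $n>0$ using the ``moreover'' clause, since $u^k\in(p,u)^k\subseteq I=(p^n)$, whereas the paper applies $T_{\Es}^{(r)}$ to the extended sequence and gets a nonzero cokernel $\ZZ/p^n\ZZ$ of the surjection $L\to\ZZ_p$; both rest on the same fact that $T_{\Es}$ kills the cokernel of $\beta$.
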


\begin{remark}\label{rem:ExactKisinModlesComments}
The (exact) restriction functor $T\mapsto T|_{G_{\infty}}$ maps $\Rep_{\ZZ_p, \st}^{[0,r]}(G_K)$ (hence, also $\Rep_{\ZZ_p,  \cris}^{[0,r]}(G_K)$) into $\Rep_{\ZZ_p, \fr}^{\BK, [0, r]}(G_{\infty})$. As a consequence, the conclusion of Theorem~\ref{thm:ExactKisinModules} is valid also for the functors $\underline{\Em}^{(r)}:\Rep_{\ZZ_p, \bullet}^{[0,r]}(G_K)\rightarrow \Mod_{\Es, \fr}^{\varphi, \leq r}$ with $\bullet=\cris$ or $\st$. These are the cases of greatest interest
 (cf.~Theorem \ref{thm:MainThmKisin} in the introduction), 
 but we formulate Theorem~\ref{thm:ExactKisinModules} in greater generality for the purpose of wider applicability. 

It is also worth pointing out that the conclusion of Theorem~\ref{thm:ExactKisinModules} remains valid if the functor $\underline{\Em}^{(r)}$ is replaced by $\underline{\Em}$ (as long as the source category is $\Rep_{\ZZ_p, \fr}^{\BK, [0, r]}(G_{\infty})$). Indeed, passing between the two functors amounts to applying the twist by $\Es\{\pm r\},$ which is an exact operation. 
\end{remark}

\begin{proof}[Proof of Theorem~\ref{thm:ExactKisinModules}]
Denote $\Em=\underline{\Em}^{(r)}(L)$. Note that $\underline{\Em}^{(r)}(\ZZ_p)=\underline{\Em}(\ZZ_p(-r))=\Es\{-r\},$ the rank one Kisin module with generator $t$ and Frobenius satisfying $\varphi_{\Es\{-r\}}(t)=(c_0^{-1}E(u))^r t$, with $c_0=E(0)/p \in W^{\times}.$ By Proposition~\ref{prop:LeftExactKisinModules}, it is enough to show that the map $\beta: \Em \rightarrow \Es\{-r\}$ is surjective.

The image of $\beta$ is a nonzero submodule of $\Es\{-r\}=\Es t$, hence of the form $It$ for some ideal $0 \neq I\subseteq \Es$. By compatibility of $\beta$ with $\varphi$'s, the inclusion $\Es\varphi_{\Em}(\Em)\supseteq E(u)^r \Em$ yields $\Es\varphi_{\Es\{-r\}}(It)\supseteq E(u)^r It$. Since $\varphi_{\Es\{-r\}}(t)=(\text{unit})\cdot E(u)^rt$, this latter inclusion can be rewritten as $\Es\varphi(I)E(u)^rt \supseteq E(u)^r It$. Cancelling $E(u)^rt$ on both sides, we obtain the inclusion of ideals $\Es\varphi(I) \supseteq I$. By Lemma~\ref{lem:KeyLemma}, $I=(p^n)$ for some $n \in \mathbb{N}$ ($n \neq \infty$ since $I \neq 0$).

Suppose for contradiction that $n>0$. Then the sequence of Kisin modules can be extended to the exact sequence
\begin{center}
\begin{tikzcd}
0 \ar[r] & \underline{\Em}^{(r)}(T) \ar[r] &  \underline{\Em}^{(r)}(L) \ar[r] & \Es\{-r\} \ar[r] & \Es\{-r\}/p^n \Es\{-r\} \ar[r] & 0.
\end{tikzcd}
\end{center}
Applying the exact functor $T_{\Es}^{(r)},$ we obtain the sequence of $\ZZ_p[G_{\infty}]$-modules 
\begin{center}
\begin{tikzcd}
0 \ar[r] & T \ar[r] &  L \ar[r] & \ZZ_p \ar[r] & \ZZ/p^n\ZZ \ar[r] & 0.
\end{tikzcd}
\end{center}
This is a contradiction, since the map to $\ZZ_p$ was surjective. Thus, $n=0$, hence $I=\Es$ and $\beta$ is surjective.
\end{proof}

\begin{corollary}\label{cor:ExactBreuilModules}
Let $r$ be an integer satisfying $0\leq r<p-1$. Given a short exact sequence in $\Rep_{\ZZ_p,  \cris}^{[0,r]}(G_K)$ ($\Rep_{\ZZ_p,  \st}^{[0,r]}(G_K)$, resp.) of the form 
\begin{center}
\begin{tikzcd}
0\ar[r]& T \ar[r] & L \ar[r] & \ZZ_p \ar[r] & 0\,,
\end{tikzcd}
\end{center}
where $\ZZ_p$ is the rank $1$ trivial Galois module, the corresponding sequence of strongly divisible modules 
\begin{center}
\begin{tikzcd}
0\ar[r]& \mathcal{M}_{\st}(T) \ar[r] & \mathcal{M}_{\st}(L) \ar[r] & \mathbf{1} \ar[r] & 0
\end{tikzcd}
\end{center}
is also short exact with respect to the exact structure on $\Mod(S)_r^{\varphi, N}$ (on $\Mod(S)_{r,\log}^{\varphi, N}$, resp.).
\end{corollary}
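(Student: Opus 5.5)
The plan is to reduce to Theorem~\ref{thm:ExactKisinModules} via the comparison functor $\mathcal{M}_{\Es}^{(r)}$. Apply $\mathcal{M}_{\st}$ to the given sequence $(\tau)$ to get a complex
\[0\to \mathcal{M}_{\st}(T)\to \mathcal{M}_{\st}(L)\to \mathbf{1}\to 0\]
in $\Mod(S)_{r,\log}^{\varphi,N}$. It is a complex because $\mathcal{M}_{\st}$ is a functor and the composite $T\to\ZZ_p$ is zero. Exactness in the exact structure only involves the underlying $S$-modules and the $\Fil^r$-submodules, not $N$. It is therefore enough to check exactness after applying the forgetful functor $U$. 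The image of this sequence is also the relevant one in the crystalline subcategory $\Mod(S)_r^{\varphi,N}$, whose exact structure is the induced one.

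By Corollary~\ref{cor:CompatibleQuasiInverses}, $U\mathcal{M}_{\st}$ is naturally isomorphic to $\mathcal{M}_{\Es}^{(r)}\circ\underline{\Em}^{(r)}$ on $\Rep_{\ZZ_p,\st}^{[0,r]}(G_K)$. Naturality is what lets us transport the whole sequence and not just the objects: $\mathcal{M}_{\st}$ is a quasi-inverse of $T_{\st,\star}$, and the identification in that corollary comes from $T_{\cris,\star}$ and full faithfulness. So $U$ applied to the sequence above is isomorphic, as a sequence, to $\mathcal{M}_{\Es}^{(r)}$ applied to
\[0\to\underline{\Em}^{(r)}(T)\to\underline{\Em}^{(r)}(L)\to\underline{\Em}^{(r)}(\ZZ_p)\to0.\]
Restriction to $G_\infty$ is exact and lands in $\Rep_{\ZZ_p,\fr}^{\BK,[0,r]}(G_\infty)$ (Remark~\ref{rem:ExactKisinModlesComments}). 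Hence Theorem~\ref{thm:ExactKisinModules} shows that this Kisin sequence is short exact as $\Es$-modules. All its terms lie in $\Mod_{\Es,\fr}^{\varphi,\leq r}$, so it is a short exact sequence in that exact category.

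Proposition~\ref{prop:KisinToBreuilEquivalence} says $\mathcal{M}_{\Es}^{(r)}$ is exact for the natural exact structures. Therefore the resulting sequence in $\Mod(S)_r^{\varphi}$ is exact on underlying $S$-modules and on $\Fil^r$. We also need $\mathcal{M}_{\Es}^{(r)}(\Es\{-r\})=\mathbf{1}$, which was computed in the Breuil--Kisin twist example; this identifies the last term.

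The main subtle point is the naturality in the second step. The isomorphism of Corollary~\ref{cor:CompatibleQuasiInverses} must be compatible with the maps, so that we have an isomorphism of complexes rather than merely termwise isomorphisms. I would verify this by noting that both $U\mathcal{M}_{\st}$ and $\mathcal{M}_{\Es}^{(r)}\underline{\Em}^{(r)}$ become identified with restriction under $T_{\cris,\star}$, via the commutative diagram~\eqref{diag1} and Theorem~\ref{thm:QuasiInverseCompatibility}. Since $T_{\cris,\star}$ is fully faithful on $\Mod(S)_r^\varphi$ (it is $\mathbb{D}^r_\fr\circ T_{\Es}^{(r)}\circ$ the inverse of $\mathcal{M}_{\Es}^{(r)}$), the isomorphisms are natural.
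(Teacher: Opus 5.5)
Your proposal is correct and is essentially the paper's proof: Theorem~\ref{thm:ExactKisinModules} (applied after restriction to $G_\infty$), the natural isomorphism $U\mathcal{M}_{\st}\simeq\mathcal{M}_{\Es}^{(r)}\circ\underline{\Em}^{(r)}$ of Corollary~\ref{cor:CompatibleQuasiInverses}, exactness of $\mathcal{M}_{\Es}^{(r)}$ from Proposition~\ref{prop:KisinToBreuilEquivalence}, and the observation that exactness does not involve $N$. Your extra naturality check is sound, with one harmless slip: by Theorem~\ref{thm:QuasiInverseCompatibility} one has $T_{\cris,\star}\simeq T_{\Es}^{(r)}\circ\underline{\Em}^{(r)}_{S}$ with no $\mathbb{D}^r_{\fr}$ (with it, the composite is the contravariant $T^{\star}_{\cris}$), and full faithfulness holds either way.
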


\begin{proof}
From Theorem~\ref{thm:ExactKisinModules},  Corollary~\ref{cor:CompatibleQuasiInverses} and Proposition~\ref{prop:KisinToBreuilEquivalence}, we obtain that the sequence 
\begin{center}
\begin{tikzcd}
0\ar[r]& U\mathcal{M}_{\st}(T) \ar[r] & U\mathcal{M}_{\st}(L) \ar[r] & \mathbf{1} \ar[r] & 0
\end{tikzcd}
\end{center}
in $\Mod(S)_r^{\varphi}$ is short exact (in the sense of the exact category structure, i.e., including exactness of $\Fil^r$'s). But the forgetful functor $U$ does not affect the exactness of the sequence one way or another, that is, the exactness does not depend on the monodromy operators $N$ in any way. The claim thus follows. 
\end{proof}

\begin{theorem}\label{Th:isomorphism_ext}
Let $r$ be an integer satisfying $0\leq r<p-1$.
Given $T \in \Rep_{\ZZ_p, \cris}^{[0,r]}(G_K),$ one has $$H^1_f(K, T)\simeq \Ext^1_{\Mod(S)_r^{\varphi, N}}(\mathbf{1}, \mathcal{M}_{\st}(T)),$$
 where the Ext group on the right is the Yoneda Ext in an exact category.
\end{theorem}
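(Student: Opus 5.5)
We construct mutually inverse maps between $H^1_f(K,T)$ and $\Ext^1_{\Mod(S)_r^{\varphi,N}}(\mathbf{1},\mathcal{M}_{\st}(T))$, both described via Yoneda extensions. Recall that $H^1_f(K,T)$ is, by definition, the subgroup of $H^1(K,T)=\Ext^1_{\ZZ_p[G_K]}(\ZZ_p,T)$ consisting of classes of extensions $0\to T\to L\to \ZZ_p\to 0$ with $L$ crystalline. Since $T$ and $\ZZ_p$ have Hodge--Tate weights in $[0,r]$, any such $L$ does as well, because Hodge--Tate weights of an extension are the union of those of the ends. Hence $H^1_f(K,T)$ is the Yoneda $\Ext^1$ in the exact category $\Rep_{\ZZ_p,\cris}^{[0,r]}(G_K)$. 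This category is closed under extensions inside crystalline lattices, so the Baer sum on $H^1$ restricts to the Baer sum of the exact category, and equivalent extensions correspond.

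\textbf{Map from Galois to Breuil modules.} Given a class represented by $(\tau)\colon 0\to T\to L\to\ZZ_p\to 0$, apply $\mathcal{M}_{\st}$. By Corollary~\ref{cor:ExactBreuilModules}, $\mathcal{M}_{\st}(\tau)$ is short exact in $\Mod(S)_r^{\varphi,N}$, and $\mathcal{M}_{\st}(\ZZ_p)\simeq\mathbf{1}$, since $T_{\st,\star}(\mathbf{1})=\ZZ_p$. This gives a Yoneda $1$-extension of $\mathbf{1}$ by $\mathcal{M}_{\st}(T)$. An equivalence of extensions (a map $L\to L'$ compatible with the ends) is carried by functoriality to an equivalence, so the map is well defined on classes.

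\textbf{Inverse map.} Send an extension $0\to\mathcal{M}_{\st}(T)\to M\to\mathbf{1}\to 0$ in $\Mod(S)_r^{\varphi,N}$ to $T_{\st,\star}$ of it. This is short exact by Corollary~\ref{cor:exactness1}, whose covariant version follows by applying the exact duality $\mathbb{D}^r_{\fr}$. It lands in crystalline lattices by Theorem~\ref{thm:Liu}, and via the natural isomorphism $T_{\st,\star}\mathcal{M}_{\st}\simeq\mathrm{id}$ its ends are identified with $T$ and $\ZZ_p$. The two constructions are mutually inverse on equivalence classes because $T_{\st,\star}$ and $\mathcal{M}_{\st}$ are quasi-inverse equivalences: the unit and counit isomorphisms furnish equivalences of extensions. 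Naturality in $T$ also comes from these natural isomorphisms.

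\textbf{Group structure.} A bijection of sets would follow formally, but we must also check that the Baer sums match. The cleanest route is to note that $T_{\st,\star}$ is an exact equivalence whose quasi-inverse preserves the relevant short exact sequences. Baer sum is built from the pullback along the diagonal $\mathbf{1}\to\mathbf{1}\oplus\mathbf{1}$ and the pushout along the codiagonal, and we verify compatibility with each in turn. The exact functor $T_{\st,\star}$ commutes with pullbacks along deflations and pushouts along inflations in exact categories, since these are characterized by short exact sequences. Hence $T_{\st,\star}$ induces a homomorphism $\Ext^1(\mathbf{1},\mathcal{M}_{\st}(T))\to H^1_f(K,T)$, and we have shown this homomorphism is bijective.

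The main obstacle, namely exactness of $\mathcal{M}_{\st}$ on such extensions, is already settled by Corollary~\ref{cor:ExactBreuilModules}; the remaining care lies only in the bookkeeping of the identification $\mathcal{M}_{\st}(\ZZ_p)\simeq\mathbf{1}$ and of the unit/counit isomorphisms.
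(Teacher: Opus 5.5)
Your proposal is correct and follows the paper's route: identify $H^1_f(K,T)$ with $\Ext^1$ in $\Rep_{\ZZ_p,\cris}^{[0,r]}(G_K)$, then transport extensions in both directions. The exact functor $T_{\st,\star}$ handles one direction, Corollary~\ref{cor:ExactBreuilModules} handles $\mathcal{M}_{\st}$ in the other, and the fact that these are quasi-inverse equivalences makes the two constructions mutually inverse. Your explicit check that $T_{\st,\star}$ respects Baer sums, because exact functors preserve pullbacks along deflations and pushouts along inflations, spells out what the paper leaves implicit when it says the correspondences ``respect the usual equivalence relations of extensions.''
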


\begin{proof}
By \cite[\S 3]{BlochKato}, the left-hand side of the isomorphism to be proved is naturally isomorphic to $\Ext^1_{\Rep_{\ZZ_p, \cris}^{[0,r]}(G_K)}(\ZZ_p, T),$ classifying crystalline extensions of the trivial module $\ZZ_p$ by $T$. Likewise, $\Ext^1_{\Mod(S)_r^{\varphi, N}}(\mathbf{1}, \mathcal{M}_{\st}(T))$ classifies extensions (in $\Mod(S)_r^{\varphi, N}$) of $\mathcal{M}_{\st}(T)$ by $\mathbf{1}$. Given such an extension, applying the exact functor $T_{\st, \star}$ produces a crystalline extension of $T$ by $\ZZ_p$. Conversely, if one starts with an extension of $T$ by $\ZZ_p$, Corollary~\ref{cor:ExactBreuilModules} guarantees that application of $\mathcal{M}_{\st}$ gives an extension of $\mathcal{M}_{\st}(T)$ by $\mathbf{1}$. Due to $\mathcal{M_{\st}}$ being an equivalence of categories, these correspondences of extensions respect the usual equivalence relations of extensions. Consequently, we have an isomorphism $\Ext^1_{\Mod(S)_r^{\varphi, N}}(\mathbf{1}, \mathcal{M}_{\st}(T))\simeq \Ext^1_{\Rep_{\ZZ_p,  \cris}^{[0,r]}(G_K)}(\ZZ_p, T)$. This proves the claim.
\end{proof}

Using essentially the same proof, one obtains the following semistable version of the same claim.

\begin{theorem}\label{Th:isomorphism_ext_st}
Let $r$ be an integer satisfying $0\leq r<p-1$.
Given $T \in \Rep_{\ZZ_p,  \st}^{[0,r]}(G_K),$ one has $$H^1_{\st}(K, T)\simeq \Ext^1_{\Mod(S)_{r, \log}^{\varphi, N}}(\mathbf{1}, \mathcal{M}_{\st}(T)),$$
 where the Ext group on the right is the Yoneda Ext group in the exact category $\Mod(S)_{r, \log}^{\varphi, N}$.
\end{theorem}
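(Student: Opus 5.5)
The plan is to mirror the proof of Theorem~\ref{Th:isomorphism_ext}, replacing crystalline objects by semistable ones and $H^1_f$ by $H^1_{\st}$. First, I will identify the left-hand side with an extension group of Galois lattices. By the definition of $H^1_{\st}(K,T)$ in \cite{FontainePerrinRiou, Nekovar}, it is the subgroup of $H^1(K,T)$ consisting of classes of extensions $0\to T\to L\to \ZZ_p\to 0$ with $L[1/p]$ semistable. Since $L$ is then semistable with Hodge--Tate weights in $[0,r]$ (those of $T$ together with $0$), and the category $\Rep_{\ZZ_p,\st}^{[0,r]}(G_K)$ is closed under extensions inside all $\ZZ_p[G_K]$-lattices, we get a natural isomorphism
\[H^1_{\st}(K,T)\simeq \Ext^1_{\Rep_{\ZZ_p,\st}^{[0,r]}(G_K)}(\ZZ_p,T),\]
where the right-hand side is the Yoneda $\Ext^1$ in the exact category of semistable lattices. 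This is the usual identification of $H^1$ with Yoneda extensions of $\ZZ_p$ by $T$, restricted to semistable extensions.

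Next, I will compare this Yoneda $\Ext^1$ with the one on the side of strongly divisible modules. By Theorem~\ref{thm:Liu} and Corollary~\ref{cor:exactness1}, the functor $T_{\st,\star}$ is an exact equivalence $\Mod(S)_{r,\log}^{\varphi,N}\to\Rep_{\ZZ_p,\st}^{[0,r]}(G_K)$ with $T_{\st,\star}(\mathbf{1})=\ZZ_p$. It therefore sends extensions of $\mathbf{1}$ by $\mathcal{M}_{\st}(T)$ to extensions of $\ZZ_p$ by $T$. Conversely, applying $\mathcal{M}_{\st}$ (here taken as the quasi-inverse on the semistable category) to an extension of $\ZZ_p$ by $T$ gives a short exact sequence in the exact category $\Mod(S)_{r,\log}^{\varphi,N}$. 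This is exactly the semistable case of Corollary~\ref{cor:ExactBreuilModules}, which rests on Theorem~\ref{thm:ExactKisinModules} via Remark~\ref{rem:ExactKisinModlesComments} and Corollary~\ref{cor:CompatibleQuasiInverses}.

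Finally, because both functors are equivalences of categories, they carry morphisms of extensions to morphisms of extensions. Hence they respect Yoneda equivalence and are mutually inverse on equivalence classes. They are also compatible with Baer sum, since each preserves the pullbacks and pushouts used to define it: both functors are exact and send these admissible constructions to admissible ones. This gives the group isomorphism
\[H^1_{\st}(K,T)\simeq \Ext^1_{\Mod(S)_{r,\log}^{\varphi,N}}(\mathbf{1},\mathcal{M}_{\st}(T)).\]

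The only substantive input is the exactness of $\mathcal{M}_{\st}$ on extensions of $\ZZ_p$ in the semistable setting. This is already provided by Corollary~\ref{cor:ExactBreuilModules}, so the main point needing care is the first step: checking that the definition of $H^1_{\st}$ used matches semistable extensions whose weights stay within $[0,r]$.
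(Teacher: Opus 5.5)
Your proposal is correct and is essentially the paper's argument: the paper proves this by transposing the proof of Theorem~\ref{Th:isomorphism_ext} verbatim to the semistable setting, with the semistable case of Corollary~\ref{cor:ExactBreuilModules} providing surjectivity. Two of your side remarks are false, though neither carries any weight in the argument. First, $\Rep_{\ZZ_p,\st}^{[0,r]}(G_K)$ is \emph{not} closed under extensions among $\ZZ_p[G_K]$-lattices; this is exactly why $H^1_{\st}(K,T)$ is in general a proper subgroup of $H^1(K,T)$, and you only need fullness together with $L$ being semistable by hypothesis. Second, $\mathcal{M}_{\st}$ is \emph{not} exact (Corollary~\ref{cor:non-exactness}), so for compatibility with Baer sums argue instead that the exact functor $T_{\st,\star}$ induces a homomorphism on $\Ext^1$, which you have already shown to be bijective.
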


\begin{remark}\label{remarkGap}
 Theorem  \ref{Th:isomorphism_ext} fills a gap in the proof \cite[Proposition 4.1.4]{Iovita/Marmora}. Indeed, in that proof, the authors used 
 the isomorphism (using the notation of \emph{loc.cit})  
 $$\Ext^1_{\Mod(S)^r}(\mathbf{1}, M) \xrightarrow{\sim} \Ext^1_{\Rep_{\ZZ_p}(G_K)_{\mathrm{crys}}^{[0,r]}}(\ZZ_p, T_{\st}(M))
 \qquad \text{(by Cor. 2.3.5)}.$$
Actually, Corollary 2.3.5 in \cite{Iovita/Marmora} implies the existence and the injectivity of the map, since $T_{\st}$ is an exact equivalence, but considering that any quasi-inverse of $T_{\st}$ is not exact, as we have seen in Corollary \ref{cor:non-exactness}, for the surjectivity we need  Theorem \ref{Th:isomorphism_ext}. 
\end{remark}

The same argument as in the proof of Theorem~\ref{Th:isomorphism_ext} also applies in the context of Kisin modules, only starting directly from Theorem~\ref{thm:ExactKisinModules} instead of Corollary~\ref{cor:ExactBreuilModules}. As mentioned in 
the introduction (see Remark \eqref{RemarkIntro2} after Theorem \ref{thm:MainThmBreuil}),  
we formulate these consequences explicitly in the case of Kisin $G_K$-modules.

\begin{theorem}\label{thm:BKGKComputesCohomology}
Fix an integer $r \geq 0$.
\begin{enumerate}
\item For every $T \in \Rep_{\ZZ_p, \st}^{[0, r]}(G_K),$ we have $$H^1_{\st}(K, T)\simeq \Ext^1_{\Mod_{\Es, \fr}^{\varphi, G_K, \leq r}}(\Es\{-r\}, \underline{\Em}^{(r)}_{G_K}(T))\,.$$
\item For every $T \in \Rep_{\ZZ_p, \cris}^{[0, r]}(G_K),$ we have $$H^1_f(K, T)\simeq \Ext^1_{\Mod_{\Es, \fr, \cris}^{\varphi, G_K, \leq r}}(\Es\{-r\}, \underline{\Em}^{(r)}_{G_K}(T))\,.$$
\end{enumerate}
\end{theorem}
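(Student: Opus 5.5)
The plan is to copy the argument of Theorem~\ref{Th:isomorphism_ext}, now using the Kisin $G_K$-module equivalence of Theorem~\ref{thm:BKGKCorrespondence} in place of Liu's theorem. The starting point is the Bloch--Kato identification $H^1_f(K,T)\simeq \Ext^1_{\Rep_{\ZZ_p,\cris}^{[0,r]}(G_K)}(\ZZ_p,T)$. Its semistable analogue is $H^1_{\st}(K,T)\simeq \Ext^1_{\Rep_{\ZZ_p,\st}^{[0,r]}(G_K)}(\ZZ_p,T)$, which is the definition in \cite{FontainePerrinRiou, Nekovar}. Both are Yoneda groups of extensions inside the exact category of lattices; this category is closed under extensions in the category of all $G_K$-lattices, because semistability and crystallinity and the range of Hodge--Tate weights are preserved under extensions of the rational representations.

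The first step is to show that $T^{(r)}_{\Es,G_K}$ and its quasi-inverse $\underline{\Em}^{(r)}_{G_K}$ are both exact, for the exact structure on $\Mod_{\Es,\fr}^{\varphi,G_K,\leq r}$ inherited from short exact sequences of underlying Kisin modules, with the compatible $G_K$-actions on $\Em_{\inf}$.
\begin{itemize}
\item \emph{Exactness of $T^{(r)}_{\Es,G_K}$.} This follows from Proposition~\ref{prop:PropertiesOfTrMr}(1). The underlying $G_\infty$-representation of $T^{(r)}_{\Es,G_K}(\Em)$ is $T^{(r)}_{\Es}(\Em)$, and exactness can be checked after forgetting the $G_K$-action.
\item \emph{Exactness of $\underline{\Em}^{(r)}_{G_K}$ on $1$-extensions of $\ZZ_p$ by $T$.} Take a short exact sequence $0\to T\to L\to \ZZ_p\to 0$ in $\Rep_{\ZZ_p,\st}^{[0,r]}(G_K)$. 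The Kisin module underlying $\underline{\Em}^{(r)}_{G_K}(L)$ is $\underline{\Em}^{(r)}(L|_{G_\infty})$, by the full faithfulness of $T_{\Es}$ on free Kisin modules and the characterization in Proposition~\ref{prop:PropertiesOfTrMr}. Remark~\ref{rem:ExactKisinModlesComments} applies to this restriction, so Theorem~\ref{thm:ExactKisinModules} shows that the underlying sequence of Kisin modules is short exact. We also have $\underline{\Em}^{(r)}_{G_K}(\ZZ_p)=\Es\{-r\}$, carrying its unique $G_K$-structure.
\end{itemize}
With both facts in hand, the resulting sequence of Kisin $G_K$-modules is exact in $\Mod_{\Es,\fr}^{\varphi,G_K,\leq r}$. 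The morphisms in it are automatically compatible with the $G_K$-actions on the $\Em_{\inf}$'s, since $\underline{\Em}^{(r)}_{G_K}$ is a functor.

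Next I would pass to Yoneda $\Ext^1$ groups exactly as in the proof of Theorem~\ref{Th:isomorphism_ext}.
\begin{itemize}
\item An extension of $\Es\{-r\}$ by $\underline{\Em}^{(r)}_{G_K}(T)$ maps under $T^{(r)}_{\Es,G_K}$ to an extension of $\ZZ_p$ by $T$.
\item An extension $0\to T\to L\to\ZZ_p\to 0$ maps under $\underline{\Em}^{(r)}_{G_K}$ to an exact sequence, by the first step.
\end{itemize}
Since the two functors are quasi-inverse equivalences, these maps respect equivalence of extensions and are mutually inverse. This proves (1). For (2), I would restrict to the full subcategory $\Mod_{\Es,\fr,\cris}^{\varphi,G_K,\leq r}$, which by Theorem~\ref{thm:BKGKCorrespondence}(2) corresponds to $\Rep_{\ZZ_p,\cris}^{[0,r]}(G_K)$. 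The same argument goes through, because $\Es\{-r\}$ and $\underline{\Em}^{(r)}_{G_K}(T)$ lie in this subcategory and the corresponding extension classes are exactly the crystalline ones.

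The main obstacle is the bookkeeping about exact structures. One point is that the category of Kisin $G_K$-modules, and its crystalline subcategory, must be closed under extensions taken in a suitable ambient abelian category, so that they are honestly exact categories. Here I would use the equivalence together with extension-closedness on the Galois side, and then check that an extension of Kisin modules whose outer terms satisfy conditions (1), (2) and the ideal condition of Theorem~\ref{thm:BKGKCorrespondence}(2) does again satisfy them. The other point is identifying the underlying Kisin module of $\underline{\Em}^{(r)}_{G_K}(T)$ with $\underline{\Em}^{(r)}(T)$; this follows from the full faithfulness in Proposition~\ref{prop:PropertiesOfT}(3). No restriction $r<p-1$ is needed, because Breuil modules never appear.
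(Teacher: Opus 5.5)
Your main argument is the paper's. The paper proves this theorem by rerunning the proof of Theorem~\ref{Th:isomorphism_ext}, with Theorem~\ref{thm:ExactKisinModules} (applied to restrictions to $G_\infty$, as in Remark~\ref{rem:ExactKisinModlesComments}) in place of Corollary~\ref{cor:ExactBreuilModules}. Your identification of the underlying Kisin module of $\underline{\Em}^{(r)}_{G_K}(L)$ with $\underline{\Em}^{(r)}(L|_{G_\infty})$ is correct, and so is your remark that $r<p-1$ is not needed.

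However, your justification of the exact structures is false, and it contradicts what you are proving. Crystalline and semistable representations are \emph{not} closed under extensions among all $G_K$-representations. If $\Rep_{\ZZ_p,\cris}^{[0,r]}(G_K)$ were extension-closed among all $G_K$-lattices, every class in $H^1(K,T)$ would be represented by a crystalline extension. Then $\Ext^1_{\Rep_{\ZZ_p,\cris}^{[0,r]}(G_K)}(\ZZ_p,T)$ would be all of $H^1(K,T)$ rather than $H^1_f(K,T)$, which already fails for $T=\ZZ_p$ (cf.\ Remark~\ref{rem:cyclemap}).

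Your proposed check on the Kisin side fails for the same reason. Take $\Em=\Es e_1\oplus\Es e_2$ with $\varphi(e_i)=e_i$, and let $G_K$ act on $\Em\otimes_{\Es}\ainf$ by $g(e_2)=e_2+c(g)e_1$. Here $c\colon G_K\to\ZZ_p$ is a nonzero unramified homomorphism; it is nonzero on $G_\infty$ because $K_\infty/K$ is totally ramified. Then $\Es e_1$ and the quotient $\Es e_2$ satisfy conditions (1) and (2), but $\Em$ does not.

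The correct setup is as follows. Call a sequence exact when all three terms lie in the category and the underlying sequence of $\ZZ_p$-modules (resp.\ $\Es$-modules) is short exact. Then verify Quillen's axioms directly:
\begin{itemize}
\item Pushouts along admissible monomorphisms are quotients of direct sums.
\item Pullbacks along admissible epimorphisms are direct-summand submodules of direct sums, since the relevant cokernels are free.
\item The following properties all pass to such objects, using flatness of $\ainf$ over $\Es$: crystallinity, semistability, Hodge--Tate weights in $[0,r]$, height $\leq r$ (Remark~\ref{rem:Kisinexactcat}), conditions (1) and (2), and the ideal condition of Theorem~\ref{thm:BKGKCorrespondence}(2).
\end{itemize}
It is precisely the failure of extension-closedness that makes these Yoneda $\Ext^1$ groups cut out $H^1_f$ and $H^1_{\st}$. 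With this correction, the rest of your argument goes through unchanged.
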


\begin{remark}\label{rem:VariantsOfMainResult}
In this remark, we discuss some mild extensions or analogues of the main results of this section.
\begin{enumerate}
\item Just like in the case of Theorem~\ref{thm:ExactKisinModules}, taking the twist $\underline{\Em}^{(r)}_{G_K}$ instead of $\underline{\Em}_{G_K}$ in Theorem~\ref{thm:BKGKComputesCohomology} is, strictly speaking, not necessary (if one drops the ``$\leq r$'' condition in the description of the category of Kisin $G_K$-modules). The choice of $\underline{\Em}^{(r)}_{G_K}$ is, however, natural since it translates the problem to computing extensions of \textit{effective} Kisin $G_K$-modules. On the other hand, the requirement that $T$ has Hodge--Tate weights in the range $[0,r]$ (i.e., non-negative) is crucial. 
Indeed, Example \ref{ex:KeyCounterexample} shows that when $T=\ZZ_p(-1)$, the Bloch-Kato Selmer group $H^1_f(K,T)$ can no longer be computed by a group of $1$-extensions of Breuil-Kisin modules. We note that in this case, if  $V=\QQ_p(-1)$ is the associated crystalline representation of $G_K$, then $H^1_f(K,V)=0$ (see \cite[Example 3.9]{BlochKato}), and hence the counterexample to exactness arises from a torsion class in $H^1_f(K,\ZZ_p(-1))$. 

\item From the proof of Theorem~\ref{thm:ExactKisinModules}, it is clear that the key property of the Kisin module $\Es\{-r\}$ used in the proof is that the Frobenius operator acts on the generator $t$ by $\varphi(t)=E(u)^r \cdot \mathrm{(unit)} \cdot t$. In particular, modifying the unit does not change the outcome of the Theorem. Consequently, all the $\Ext$-isomorphisms hold when the trivial representation $\ZZ_p$ is replaced by an unramified character.
\item As alluded to in Introduction, Theorem~\ref{thm:BKGKComputesCohomology} holds when Gao's notion of Kisin $G_K$-modules is replaced by another category of Kisin modules equivalent via an enhancement of the functors $T_{\Es}, \underline{\Em}$ to the category of lattices in crystalline or semistable representations, such as the variants \cite{LiuPhiGHat, DuLiuPhiGHat, YaoSemistableLattices}, \cite[Appendix~F]{EmertonGee}. In fact, passing between these categories via the equivalence functors to semistable lattices, one observes that the underlying Kisin modules between corresponding objects are always the same. Consequently, all these Kisin-type categories are equivalent to each other as exact categories.
\end{enumerate}
\end{remark}

\subsection{Formulation via prismatic $F$-crystals}

Let us now reformulate the main result in terms of prismatic $F$-crystals of Bhatt and Scholze \cite{BhattScholze2} and thus, prove Theorem~\ref{thm:MainThmPrismatic}. Recall that the absolute prismatic site $\spf(\oh_K)_{\Prism}$ is the category whose objects are bounded prisms $(A, I)$ endowed with a map $\oh_K \to A/I$, and whose morphisms are given by the opposites of morphisms of prisms $(A, I)\to (B, J)=(B, IB)$ over $\oh_K$. The topology is given by ($(p, I)$-completely) flat maps of prisms. The resulting site is equipped with the structure sheaf $\oh_{\Prism}:(A, I) \mapsto A$, yielding a ringed site $(\spf(\oh_K)_{\Prism}, \oh_{\Prism}).$ The sheaf of rings $\oh_{\Prism}$ further comes with a distinguished sheaf of ideals $\mathcal{I}_{\Prism} \subseteq \oh_{\Prism}$ given by $\mathcal{I}_{\Prism}((A, I))=I$. 

Recall that a prismatic $F$-crystal on $\spf(\oh_K)$ is a vector bundle $\mathcal{E}$ on $(\spf(\oh_K)_{\Prism}, \oh_{\Prism})$ equipped with an isomorphism $\varphi_{\mathcal{E}}:\varphi^* \mathcal{E}[1/\mathcal{I}_{\Prism}]\stackrel{\sim}\rightarrow \mathcal{E}[1/\mathcal{I}_{\Prism}]$ (where for a sheaf $\mathcal{F}$ on $(\spf(\oh_K)_{\Prism}, \oh_{\Prism})$, $\varphi^*\mathcal{F}$ is given by $(\varphi^* \mathcal{F})(A, I)=\varphi_A^*(\mathcal{F}(A, I))$, with $\varphi_A: A\to A$ being the Frobenius lift of the prism $(A, I)$). 
By \cite[Theorem~1.2]{BhattScholze2}, there is an equivalence of categories given by an (exact) \'{e}tale realization functor $T_{\oh_K}: \mathsf{Vect}^{\varphi}(\spf(\oh_K)_{\Prism}, \oh_{\Prism}) \stackrel{\sim}\rightarrow \Rep_{\ZZ_p, \cris}(G_K)$ where the left-hand side is the category of prismatic $F$-crystals. Denote by $D_{\Prism}$ the quasi-inverse. The Breuil-Kisin prism $(\Es, E(u))$ is naturally an object of $\spf(\oh_K)_{\Prism}$, and the functor of sections $\ev_{(\Es, (E(u)))}:\mathcal{F}\mapsto \Em =\mathcal{F}(\Es, (E(u)))$ lands in $\Mod_{\Es, \fr}^{\varphi}$. Moreover, the composition $\mathrm{ev}_{(\Es, (E(u)))}\circ D_{\Prism}$ is the functor $\underline{\Em}$ (see \cite[\S 7]{BhattScholze2}).
\begin{theorem}\label{thm:ExactnessPrismatic}
Let $r \in \mathbb{N}$. Given a short exact sequence
\begin{equation}\tag{$\tau$}
\begin{tikzcd}
0\ar[r]& T \ar[r] & L \ar[r] & \ZZ_p\ar[r] & 0
\end{tikzcd}
\end{equation}
in $\Rep_{\ZZ_p,  \cris}^{[0, r]}(G_K),$  the corresponding sequence of prismatic $F$-crystals 
\begin{equation}\tag{$\delta$}
\begin{tikzcd}
0\ar[r]& D_{\Prism}(T) \ar[r] & D_{\Prism}(L) \ar[r] & \oh_{\Prism}\ar[r] & 0
\end{tikzcd}
\end{equation}
is again short exact. Consequently, we have 
$$H^1_f(K, T)\simeq \Ext^1_{\mathsf{Vect}^{\varphi}(\spf(\oh_K)_{\Prism}, \oh_{\Prism})}(\oh_{\Prism}, D_{\Prism}(T)).$$
\end{theorem}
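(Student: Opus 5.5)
The plan is to reduce exactness of ($\delta$) to exactness after evaluation at the Breuil--Kisin prism, and then invoke Theorem~\ref{thm:ExactKisinModules}. First we make precise what short exactness of vector bundles on $(\spf(\oh_K)_{\Prism},\oh_{\Prism})$ means. We take it to mean that for every prism $(A,I)$ in the site, the sequence of finite projective $A$-modules $0\to D_{\Prism}(T)(A,I)\to D_{\Prism}(L)(A,I)\to A\to 0$ is exact. This is a full subcategory of sheaves of $\oh_{\Prism}$-modules closed under extensions, so Yoneda $\Ext^1$ makes sense there. Recall also that $\oh_{\Prism}=D_{\Prism}(\ZZ_p)$.

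Next we use that $(\Es,(E(u)))$ covers the final object of the topos. Given any $(A,I)$, there is a $(p,I)$-completely faithfully flat cover $(B,IB)$ of $(A,I)$ receiving a map from $(\Es,(E(u)))$. By the crystal property, $D_{\Prism}(-)(B,IB)=D_{\Prism}(-)(\Es,E(u))\widehat{\otimes}_{\Es}B$ (completed base change). Evaluation at $(\Es,E(u))$ gives $\underline{\Em}$ by \cite[\S7]{BhattScholze2}. Theorem~\ref{thm:ExactKisinModules}, combined with Remark~\ref{rem:ExactKisinModlesComments} applied to $\underline{\Em}$ on $\Rep^{[0,r]}_{\ZZ_p,\cris}(G_K)$, says that $0\to\underline{\Em}(T)\to\underline{\Em}(L)\to\underline{\Em}(\ZZ_p)=\Es\to 0$ is exact. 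This is a short exact sequence of finite free $\Es$-modules, hence split, so it stays exact after any (completed) base change to $B$. The sequence of finite projective $A$-modules at $(A,I)$ is therefore a complex whose base change to $B$ is exact. Since the terms are finite projective (so completion is harmless) and $A\to B$ is $(p,I)$-completely faithfully flat, exactness descends to $A$. This descent step is the main point needing care. I would argue it by reducing modulo $(p,I)^n$, using faithful flatness of $A/(p,I)^n\to B/(p,I)^n$ and the Mittag-Leffler property of the finite projective terms, and then passing to the limit. Alternatively, I would cite the statement of \cite{BhattScholze2} that evaluation at $(\Es,E(u))$ is conservative and exactness-detecting on vector bundles.

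Finally, for the $\Ext^1$ statement we argue exactly as in Theorem~\ref{Th:isomorphism_ext}. By \cite{BlochKato}, $H^1_f(K,T)=\Ext^1_{\Rep_{\ZZ_p,\cris}}(\ZZ_p,T)$. Note that $L$ automatically has Hodge--Tate weights in $[0,r]$. Consider the two directions.
\begin{enumerate}
\item The étale realization $T_{\oh_K}$ is an exact equivalence, so it sends extensions of $\oh_{\Prism}$ by $D_{\Prism}(T)$ to crystalline extensions of $\ZZ_p$ by $T$.
\item Conversely, the first part shows that $D_{\Prism}$ sends crystalline extensions back to short exact sequences of $F$-crystals.
\end{enumerate}
Since both functors are equivalences, they respect equivalence of extensions. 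They are mutually inverse on extension classes, which gives the claimed bijection; it is additive because Baer sums are built from exact-functor-preserved pullbacks and pushouts.
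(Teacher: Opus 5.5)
Your proposal is correct and follows the paper's proof essentially step for step. You evaluate $(\delta)$ at each prism $(A,I)$ and use the cover $(A,I)\to(B,IB)$ receiving a map from $(\Es,(E(u)))$ \cite[Example~2.6(1)]{BhattScholze2}. You then observe that $(\delta)$ evaluated at $(B,IB)$ is the base change of the split exact Breuil--Kisin sequence from Theorem~\ref{thm:ExactKisinModules} and Remark~\ref{rem:ExactKisinModlesComments}, descend exactness along the $(p,I)$-completely faithfully flat map, and deduce the $\Ext^1$ identification as in Theorem~\ref{Th:isomorphism_ext}.

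Your reduction modulo $(p,I)^n$ combined with Mittag-Leffler (a Nakayama argument also works) is a valid expansion of the step the paper simply calls faithfully flat descent. The alternative appeal to a conservativity statement in \cite{BhattScholze2} is therefore unnecessary.
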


\begin{proof}
It is enough to show that for every prism $(A, I) \in \spf(\oh_K)_{\Prism}$, the sequence
\begin{equation}\tag{$\delta_{(A, I)}$}
\begin{tikzcd}
0 \ar[r]& D_{\Prism}(T)(A, I) \ar[r]& D_{\Prism}(L)(A, I) \ar[r]& A \ar[r]& 0 
\end{tikzcd}
\end{equation} obtained by evaluating $(\delta)$ at $(A, I)$ is a short exact sequence of $A$-modules. Note that such a sequence is then automatically split (as a sequence of $A$-modules). Since $\mathrm{ev}_{(\Es, (E(u)))}\circ D_{\Prism}=\underline{\Em}$, Theorem~\ref{thm:ExactKisinModules} and Remark~\ref{rem:ExactKisinModlesComments} (1) imply that $(\delta_{(\Es, (E(u)))})$ is exact. Let $(A, I) \in \spf(\oh_K)_{\Prism}$ be a general prism. By \cite[Example 2.6 (1)]{BhattScholze2}, there is a flat cover (given as the opposite of) $(A, I) \to (B, IB)$ in $\spf(\oh_K)_{\Prism}$ and a map of prisms $(\Es, (E(u))) \to (B, IB)$ whose opposite is a map in $\spf(\oh_K)_{\Prism}$. In particular, it follows that $(\delta_{(B, IB)})$ is exact, as it is canonically identified with the base change of the split-exact sequence $(\delta_{(\Es, (E(u)))})$. Since $(\delta_{(B, IB)})$ is, likewise, obtained from $(\delta_{(A, I)})$ by base change, the fact that $(\delta_{(A, I)})$ is exact follows by $(p, I)$-completely faithfully flat descent. 
\end{proof}

\begin{remark}\label{rem:PrismaticComments}
Several comments about the above result are in order.
\begin{enumerate}
\item In the proof, we established a seemingly stronger statement that evaluation of the sequence $(\delta)$ at every object of the site yields a short exact sequence. This is, in fact, equivalent to exactness of $(\delta)$, since the prismatic structure sheaf $\oh_{\Prism}$ (hence also every locally free $\oh_{\Prism}$-module) has vanishing higher cohomology for $\ev_{(A, I)}$ for every bounded prism $(A, I)$ by \cite[Corollary~3.12]{BhattScholze}.
\item Theorem~\ref{thm:ExactnessPrismatic} has a direct analogue in the case of semistable representations and $F$-crystals on an appropriate log-prismatic site, as in \cite[Definition~5.0.6]{DuLiuPhiGHat}. Since the proof is completely analogous, we leave the details to the reader. \label{rem:PrismaticComments2}
\end{enumerate}
\end{remark}

The canonical nature of prismatic $F$-crystals allows one to extend the exactness results to theories over different prisms. Let us show an application in terms of Wach modules, which we briefly recall (see e.g. \cite[\S III]{BergerWachM} or \cite[\S 3]{AbhinandanBlochKato} for more details).

Assume that $K$ is absolutely unramified, so that $K=K_0=W[1/p]$. The Wach prism is given by the ring $\AA=A_K^+=W[[q-1]]$ (where $q$ is a formal variable), the Frobenius lift $\varphi_{\AA}$ extending that of $W=W(k)$ by $\varphi_{\AA}(q)=q^p$, and the distinguished ideal $([p]_q)$ where $[p]_q=1+q+\dots +q^{p-1}$. It is further endowed with a $W$-linear, continuous action of $\Gamma=\mathrm{Gal}(K(\mu_{p^{\infty}})/K)$ determined by the rule $g(q)=q^{\chi(g)}$ where $\chi$ denotes the cyclotomic character. A Wach module is then a finite free $\AA$-module $M$ endowed with a semilinear $\Gamma$-action which becomes trivial on the quotient $M/(q-1)M$, and a $\Gamma$-equivariant, $\AA$-linear isomorphism $\varphi_M:\varphi_{\AA}^*M[1/[p]_q]\rightarrow M[1/[p]_q]$. Denote by $\Mod_{\AA}^{\varphi,\Gamma}$ the category of Wach modules. By a result of Berger \cite[\S III.4]{BergerWachM}, there is an equivalence between $\Mod_{\AA}^{\varphi,\Gamma}$ and $\Rep_{\ZZ_p, \cris}(G_K)$. The functor from Wach modules to Galois lattices can be taken as the functor $T_{\AA}: (M \mapsto M\otimes_{\AA}\CC_K^{\flat})^{\varphi=1}$, and just like $T_{\Es}$, it is an exact functor. Denote the quasi-inverse functor by $\mathbf{N}_{\AA}$.

Let $M$ be a Wach module and let $V=T_{\AA}(M)[1/p]$ be the corresponding crystalline representation. In \cite{AbhinandanBlochKato}, Abhinandan constructs a complex $\mathcal{S}^{\bullet}(M)$ with the property that $H^i(\mathcal{S}^{\bullet}(M))[1/p] \simeq H^i_f(K,V)$ for all $i$. As a consequence of Theorem~\ref{thm:ExactnessPrismatic}, we have:

\begin{corollary}\label{cor:WachModulesSyntomic}
Assume that $K=K_0$, i.e., that $K$ is absolutely unramified. Let $T$ be a lattice in a crystalline $G_K$-representation such that all the Hodge-Tate weights of $T[1/p]$ are non-negative. Let $M$ be a Wach module such that $T_{\AA}(M)=T$. Then the 
Wach module
syntomic complex $\mathcal{S}^{\bullet}(M)$ satisfies $H^i(\mathcal{S}^{\bullet}(M))=H^i_f(K,T)$ for $i=0, 1$.
\end{corollary}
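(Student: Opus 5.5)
The idea is to transport Theorem~\ref{thm:ExactnessPrismatic} from the Breuil--Kisin prism to the Wach prism $(\AA,([p]_q))$, and then read off $H^0$ and $H^1$ of $\mathcal{S}^{\bullet}(M)$ as $\Hom$ and Yoneda $\Ext^1$ groups.

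First I will relate Wach modules to prismatic $F$-crystals. For $K=K_0$, the Wach prism $(\AA,([p]_q))$ is an object of $\spf(\oh_K)_{\Prism}$, and I expect it to be the relevant cover. The functor $\mathcal{E}\mapsto \mathcal{E}(\AA,[p]_q)$ lands in $\varphi$-modules over $\AA$. The $\Gamma$-action comes from the automorphisms $q\mapsto q^{\chi(g)}$ of the prism over $\oh_K$, and these act trivially modulo $q-1$. This gives a functor $\mathsf{Vect}^{\varphi}(\spf(\oh_K)_{\Prism},\oh_{\Prism})\to \Mod_{\AA}^{\varphi,\Gamma}$. By the known comparison of Wach modules with prismatic $F$-crystals (as used in \cite{AbhinandanBlochKato}), this functor is compatible with the étale realizations $T_{\oh_K}$ and $T_{\AA}$. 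Hence $\mathbf{N}_{\AA}\simeq \ev_{(\AA,[p]_q)}\circ D_{\Prism}$, after possibly matching covariance and twist conventions.

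Exactness then follows. Take an extension $0\to T\to L\to \ZZ_p\to 0$ of crystalline lattices, where $L$ has weights in $[0,r]$ because the weights of $T$ and $\ZZ_p$ are non-negative. Theorem~\ref{thm:ExactnessPrismatic} shows that $(\delta)$ is exact, and the proof of that theorem shows it is exact after evaluating at every prism. In particular $0\to \mathbf{N}_{\AA}(T)\to \mathbf{N}_{\AA}(L)\to \AA\to 0$ is short exact as $\AA$-modules, where $\AA=\mathbf{N}_{\AA}(\ZZ_p)$ is the trivial Wach module. The sequence is also compatible with $\varphi$ and $\Gamma$, so it is short exact in $\Mod_{\AA}^{\varphi,\Gamma}$ with the exact structure inherited from $\AA$-modules. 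Conversely, $T_{\AA}$ is exact and an equivalence.

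The argument of Theorem~\ref{Th:isomorphism_ext} now goes through word for word. It gives $\Ext^1_{\Mod_{\AA}^{\varphi,\Gamma}}(\AA,M)\simeq \Ext^1_{\Rep_{\ZZ_p,\cris}(G_K)}(\ZZ_p,T)\simeq H^1_f(K,T)$, the last isomorphism by \cite{BlochKato}. For $i=0$, full faithfulness gives $\Hom(\AA,M)\simeq \Hom_{G_K}(\ZZ_p,T)=T^{G_K}=H^0_f(K,T)$.

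It remains to identify $H^i(\mathcal{S}^{\bullet}(M))$ with $\Hom$ for $i=0$ and with Yoneda $\Ext^1$ for $i=1$ in $\Mod_{\AA}^{\varphi,\Gamma}$, integrally. I will cite Abhinandan's construction, which the paper describes as computing the Yoneda $\Ext^1$ in that category. If needed, I will check this directly. A class in $H^1$ is represented by a cocycle $(m,(c_g))$, where $m\in M$ gives $\varphi(e)=e+m$ on an extension $M\oplus \AA e$, and $c_g$ gives $g(e)=e+c_g$. The cocycle condition matches the $\varphi$-$\Gamma$ compatibility, and coboundaries correspond to changing the splitting.

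The main obstacle is the subtle integral point hidden in the extension description. An arbitrary Yoneda extension of $\AA$ by $M$ in Wach modules is automatically split as an $\AA$-module because $\AA$ is free. Still, one needs the triviality of $\Gamma$ modulo $q-1$ and the finite-height condition to be exactly encoded by the complex, so that no $p$-power index discrepancy appears. This is precisely where the complex only agrees with $H^1_f$ after inverting $p$ in \cite{AbhinandanBlochKato}. The difficulty is handled by the exactness established above: every extension of Galois lattices is realized by an extension of Wach modules.
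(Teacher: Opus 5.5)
Your proposal is correct and follows essentially the paper's route. You identify $\mathbf{N}_{\AA}$ with evaluation of $D_{\Prism}$ at the Wach prism, and deduce short exactness of $0\to \mathbf{N}_{\AA}(T)\to\mathbf{N}_{\AA}(L)\to\AA\to 0$ from Theorem~\ref{thm:ExactnessPrismatic}, since that proof gives exactness at every prism. You then combine this with Abhinandan's integral identification of $H^0$ and $H^1$ of $\mathcal{S}^{\bullet}(M)$ with $\Hom$ and $\Ext^1$ of Wach modules, and run the argument of Theorem~\ref{Th:isomorphism_ext}.

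The differences are only bookkeeping. For $\mathbf{N}_{\AA}\simeq \ev_{(\AA,([p]_q))}\circ D_{\Prism}$ the paper cites \cite[Theorem~1.2]{AbhinandanFcrystals}, using \cite[Proposition~3.8]{ReineckeGuo} to pass between analytic and ordinary prismatic $F$-crystals; your direct construction of the evaluation functor avoids that step. Also, your ``main obstacle'' paragraph misplaces the integral gap in \cite{AbhinandanBlochKato}. The complex already computes $\Ext^1$ of Wach modules integrally; what was missing was surjectivity of $\Ext^1(\AA,M)\to H^1_f(K,T)$, which is exactly what the exactness statement supplies.
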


\begin{proof}
The case $i=0$ is \cite[Lemma~4.5]{AbhinandanBlochKato}. Similarly, Proposition~4.6 and the proof of Corollary~4.7 of \cite{AbhinandanBlochKato} show that there is an isomorphism $H^1(\mathcal{S}^{\bullet}(M))\simeq \Ext^1(\AA, M)$ where the Ext is the group of extensions of Wach modules. To finish the proof, it is enough to show that $\Ext^1(\AA, M)$ is isomorphic to 
$\Ext^1_{\Rep_{\ZZ_p, \cris}(G_K)}(\ZZ_p, T)\simeq H^1_f(K, T)$.

There is a well-defined, injective map $\Ext^1(\AA, M) \to \Ext^1_{\Rep_{\ZZ_p, \cris}(G_K)}(\ZZ_p, T)$ induced by $T_{\AA}$, since this is an equivalence of categories and an exact functor. To show surjectivity, we again need to show that for a short exact sequence of the form $0 \to T \to L \to \ZZ_p \to 0$ in $\Rep_{\ZZ_p, \cris}(G_K)$, the corresponding sequence of Wach modules is also short exact.

To that end, note that by \cite[Theorem~1.2]{AbhinandanFcrystals}, the evaluation functor $\mathrm{ev}_{(\AA, ([p]_q))}: \mathcal{F} \mapsto \mathcal{F}(\AA, ([p]_q))$ induces an equivalence of categories $\mathsf{Vect}^{\varphi}(\spf(\oh_K)_{\Prism}, \oh_{\Prism}) \stackrel{\sim}\rightarrow \Mod_{\AA}^{\varphi,\Gamma},$ in a manner such that $\mathrm{ev}_{(\AA, ([p]_q))} \circ D_{\Prism}=\mathbf{N}_{\AA}$ (cf. \cite[\S 1.5]{AbhinandanFcrystals}). Strictly speaking, the equivalence is established with the category of \textit{analytic} prismatic $F$-crystals $\mathsf{Vect}^{\varphi, \mathrm{an}}(\spf(\oh_K)_{\Prism}, \oh_{\Prism})$ in order to accomodate relative setting, but there is an equivalence of categories $\mathsf{Vect}^{\varphi}(\spf(\oh_K)_{\Prism}, \oh_{\Prism})\to\mathsf{Vect}^{\varphi, \mathrm{an}}(\spf(\oh_K)_{\Prism}, \oh_{\Prism})$ compatible with \'{e}tale realization functors \cite[Proposition~3.8]{ReineckeGuo}. Now the exactness claim follows directly from Theorem~\ref{thm:ExactnessPrismatic} and Remark~\ref{rem:PrismaticComments} (1). \end{proof}

\vspace{2pt}

\subsection{An auxiliary exactness result}

In the following, we establish exactness of the quasi-inverse to the functor $\mathcal{M}_{\Es}^{(r)}: \Mod_{\Es, \fr}^{\varphi, \leq r} \rightarrow \Mod(S)^{\varphi}_r. $ That is, we show that $\mathcal{M}_{\Es}^{(r)}$ is an equivalence of exact categories.

\begin{theorem}\label{thm:KisinToBreuilIsBiExact}
Fix an integer $r$ with $0 \leq r < p-1$, and denote by $\underline{\Em}^{(r)}_{S}: \Mod(S)^{\varphi}_{r}\rightarrow \Mod_{\Es, \fr}^{\varphi, \leq r}$ the quasi-inverse to $\mathcal{M}_{\Es}^{(r)}.$ Then $\underline{\Em}^{(r)}_{S}$ is exact.
\end{theorem}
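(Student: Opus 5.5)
Let $0\to M_1\to M_2\to M_3\to 0$ be a short exact sequence in $\Mod(S)^{\varphi}_r$, meaning that both the sequence of $S$-modules and the sequence of $\Fil^r$'s are exact. Set $\Em_i=\underline{\Em}^{(r)}_S(M_i)$. The plan is to show that the resulting complex $0\to\Em_1\xrightarrow{\alpha}\Em_2\xrightarrow{\beta}\Em_3\to 0$ is short exact. I will combine the left exactness mechanism of Proposition~\ref{prop:LeftExactKisinModules} with the faithfulness of $S\otimes_{\Es}-$ from Remark~\ref{rem:SNotFlat}.

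\emph{Step 1.} Apply the exact functor $T_{\cris,\star}$ to the Breuil sequence. By Theorem~\ref{thm:QuasiInverseCompatibility}, $T^{(r)}_{\Es}(\Em_i)\simeq T_{\cris,\star}(M_i)$ naturally. Hence applying $T^{(r)}_{\Es}$ to the complex of Kisin modules gives a short exact sequence of $G_\infty$-lattices, all lying in $\Rep^{\BK}_{\ZZ_p,\fr}(G_\infty)$. Since $\underline{\Em}^{(r)}$ is fully faithful and $T^{(r)}_{\Es}$ recovers the $\Em_i$, the complex $(\Em_\bullet)$ is isomorphic to $\underline{\Em}^{(r)}$ applied to this short exact sequence of lattices. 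Proposition~\ref{prop:LeftExactKisinModules} then shows that $\alpha$ is injective, that the sequence is exact at $\Em_2$, and that $Q:=\coker\beta$ is a Kisin module killed by $(p,u)^k$. It therefore remains to prove $Q=0$.

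\emph{Step 2.} Since $\varphi:\Es\to\Es$ is flat, the sequence $0\to\varphi^*\Em_1\to\varphi^*\Em_2\to\varphi^*\Em_3\to\varphi^*Q\to 0$ is exact. Set $C=\operatorname{im}(\varphi^*\Em_2\to\varphi^*\Em_3)$. The underlying $S$-module of $\mathcal{M}^{(r)}_{\Es}(\Em_i)$ is $S\otimes_{\Es}\varphi^*\Em_i$, and by construction $\mathcal{M}^{(r)}_{\Es}(\Em_\bullet)\simeq M_\bullet$ as a complex. So $S\otimes\varphi^*\Em_2\to S\otimes\varphi^*\Em_3$ is surjective. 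Right exactness of tensor products applied to $\varphi^*\Em_3\to\varphi^*Q\to 0$ then gives $S\otimes_{\Es}\varphi^*Q=0$. The module $\varphi^*Q$ is finitely generated, and it is nonzero whenever $Q\neq 0$ because $\varphi$ is faithfully flat on $\Es$. By the faithfulness in Remark~\ref{rem:SNotFlat}, this forces $\varphi^*Q=0$, hence $Q=0$.

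This argument is short, and I suspect it already suffices. The point that needs care is whether Step 1 is legitimate: one has to check that $\underline{\Em}^{(r)}_S$ applied to the sequence really coincides, as a complex, with $\underline{\Em}^{(r)}$ applied to the Galois sequence, so that Proposition~\ref{prop:LeftExactKisinModules} applies. This follows from the naturality in Theorem~\ref{thm:QuasiInverseCompatibility} together with the full faithfulness in Proposition~\ref{prop:PropertiesOfT}.

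\emph{Step 3.} Exactness in $\Mod_{\Es,\fr}^{\varphi,\leq r}$ is simply exactness of the underlying $\Es$-modules, since the height condition is closed under extensions by Remark~\ref{rem:Kisinexactcat}. So no filtration condition needs to be checked, and the proof is complete. Step 2's faithfulness argument is the main obstacle. If it failed, for instance because $Q$ were not finitely generated, the fallback would be Lemma~\ref{lem:UtorIsPtor}: $Q$ is killed by a power of $(p,u)$, so $\varphi^*Q$ admits a nonzero quotient $k$, and $S\otimes k\simeq k\neq 0$ yields the same contradiction. Since $Q$ is a quotient of the finite free module $\Em_3$, it is finitely generated anyway, so this fallback should not be needed.
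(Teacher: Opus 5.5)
Your proposal is correct and follows essentially the same route as the paper. You first get left exactness by identifying $\underline{\Em}^{(r)}_S$ with $\underline{\Em}^{(r)}\circ T_{\cris,\star}$ and invoking Proposition~\ref{prop:LeftExactKisinModules}. You then kill the cokernel by applying $S\otimes_{\Es}\varphi^*(-)$, using surjectivity of $U(M_2)\to U(M_3)$ and the faithfulness in Remark~\ref{rem:SNotFlat}. Your explicit observation that $\varphi^*Q\neq 0$ because $\varphi$ is faithfully flat on $\Es$ fills in a step the paper leaves implicit.
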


\begin{proof}
Consider a short exact sequence 
\begin{center}
\begin{tikzcd}
0\ar[r]& M_1 \ar[r] & M_2 \ar[r] & M_3 \ar[r] & 0
\end{tikzcd}
\end{center}
in $\Mod(S)^{\varphi}_{r}$. To apply $\underline{\Em}^{(r)}_{S}$ is to apply the \'{e}tale realization $T_{\cris, \star}$ followed by $\underline{\Em}^{(r)}$ (up to natural isomorphism). Since $T_{\cris, \star}$ is exact and $\underline{\Em}^{(r)}$ is left exact, we conclude that the sequence 

\begin{center}
\begin{tikzcd}
0\ar[r]& \underline{\Em}^{(r)}_{S}(M_1) \ar[r] & \underline{\Em}^{(r)}_{S}(M_2) \ar[r] & \underline{\Em}^{(r)}_{S}(M_3)
\end{tikzcd}
\end{center} 
is left exact as a sequence of $\Es$-modules. Suppose that it is not exact, i.e., the sequence can be extended to an exact sequence of Kisin modules 
\begin{center}
\begin{tikzcd}
0\ar[r]& \underline{\Em}^{(r)}_{S}(M_1) \ar[r] & \underline{\Em}^{(r)}_{S}(M_2) \ar[r] & \underline{\Em}^{(r)}_{S}(M_3)\ar[r] & \mathfrak{C} \ar[r] & 0
\end{tikzcd}
\end{center} 
with $\mathfrak{C}\neq 0$ finitely generated (and, in fact, torsion). Applying the functor $S\otimes_{\Es}\varphi^*(\cdot)$ yields a complex of $S$-modules
\begin{center}
\begin{tikzcd}
0\ar[r]& U(M_1) \ar[r] & U(M_2) \ar[r] & U(M_3)\ar[r] & S\otimes_{\Es}\varphi^*\mathfrak{C} \ar[r] & 0
\end{tikzcd}
\end{center} 
which is exact on the right (as any tensor product), i.e., the sequence of $S$-modules 
\begin{center}
\begin{tikzcd}
U(M_2) \ar[r] & U(M_3)\ar[r] & S\otimes_{\Es}\varphi^*\mathfrak{C} \ar[r] & 0
\end{tikzcd}
\end{center} 
is exact. Moreover, $S \otimes_{\Es} \varphi^* \mathfrak{C} \neq 0$ by Remark~\ref{rem:SNotFlat}. This is a contradiction, since the first three terms of the complex come from the original short exact sequence by applying the forgetful functor, and the map $U(M_2)\rightarrow U(M_3)$ is therefore surjective.
\end{proof}

The above theorem implies that the category $\Mod(S)_r^\varphi$ is exact. Indeed, $\underline{\Em}_{S}^{(r)}$ is an exact equivalence, and we already established that the category $\Mod_{\Es, \fr}^{\varphi, \leq r}$ is exact (see \ref{rem:Kisinexactcat}). One can verify easily that the category $\Mod(S)_r^{\varphi, N}$ is also exact by verifying the Axioms of an exact category (cf.~\cite[\S A.3]{Positselski}. The only thing one needs to check is that $\Mod(S)_r^{\varphi, N}$ is closed under push-outs of admissible monos and pullbacks of admissible epis.

\vspace{2pt}

\section{Applications}\label{sec:Applications}
In this last section we discuss some applications of our exactness theorems \ref{thm:ExactKisinModules}, \ref{Th:isomorphism_ext} and \ref{thm:KisinToBreuilIsBiExact}.

\subsection{Tensor Product of strongly divisible modules}

We use the connection of Breuil modules and Kisin modules to give a notion of tensor products of Breuil modules. We introduce this operation indirectly, in a way that makes it well-behaved but, on the other hand, makes the construction somewhat inexplicit.

\begin{definition}\label{def:TensorProductQuasiSDMod}
Consider a pair of integers $r_1, r_2 \geq 0$ with $r_1+r_2<p-1.$ 
The tensor product functor $\otimes: \Mod(S)_{r_1}^{\varphi} \times \Mod(S)_{r_2}^{\varphi}\rightarrow \Mod(S)_{r_1+r_2}^{\varphi}$ is defined by the prescription
$$M_1\otimes M_2=\mathcal{M}_{\Es}(\underline{\Em}^{(r_1)}_S(M_1)\otimes \underline{\Em}^{(r_2)}_S(M_2)), \;\; M_i \in \Mod(S)_{r_i}^{\varphi},\; i=1,2.$$
\end{definition}

\begin{remark}
The above definition is clearly the ``correct one'' in the sense that for all $M_1, M_2$ as above, one has 
\begin{align*}
T_{\cris, \star}(M_1 \otimes M_2)& = T_{\Es}^{(r_1+r_2)}(\underline{\Em}^{(r_1)}_{S}(M_1)\otimes \underline{\Em}^{(r_2)}_{S}(M_2))\\
&= T_{\Es}^{(r_1)}(\underline{\Em}^{(r_1)}_{S}(M_1))\otimes T_{\Es}^{(r_2)}(\underline{\Em}^{(r_2)}_{S}(M_2))
=T_1 \otimes_{\ZZ_p} T_2
\end{align*}
where $T_i=T_{\cris, \star}(M_i), \; i=1,2.$ That is, $\otimes$ is compatible with \'{e}tale realization. Notice that the second equality follows by Proposition \ref{prop:PropertiesOfT}.

It is easy to see that the underlying $S$-module of $M_1 \otimes M_2$ is $M_1 \otimes_S M_2$. However, it does not seems obvious how to describe $\Fil^{r_1+r_2}(M_1\otimes M_2)$ directly in terms of $\Fil^{r_i}M_i$'s. Clearly it is a submodule containing the image of $\Fil^{r_1}M_2\otimes_{S}\Fil^{r_2}M_2$ and $\Fil^{r_1+r_2}(S)\cdot M_1\otimes M_2$, but it is not even clear whether $\Fil^r$ is the sum of these two submodules in general (e.g., due to $p$-saturatednesss considerations). Similarly, the operator $\varphi_r$ remains somewhat inexplicit, although it is easy to see how it necessarily behaves on the above-mentioned sum.\end{remark}

\begin{definition}\label{def:TensorProductSDMod}
Consider a pair of integers $r_1, r_2 \geq 0$ with $r_1+r_2<p-1.$ 
The tensor product functor $\otimes: \Mod(S)_{r_1, \log}^{\varphi, N} \times \Mod(S)_{r_2, \log}^{\varphi, N}\rightarrow \Mod(S)_{r_1+r_2, \log}^{\varphi, N}$ is defined as follows.
\ Given $(M_i, \Fil^{r_i}M_i, \varphi_{r_i}, N_i) \in \Mod_{r_i, \log}^{\varphi, N}(S),\; i=1,2,$ we set
$$M_1 \otimes M_2=\mathcal{M}_{\st}(T_1\otimes_{\ZZ_p} T_2),$$
where $T_i=T_{\st, \star}(M_i),\; i=1, 2.$
\end{definition}

\begin{remark}\label{rem:TensorProductCompatibility}
  For $M_1, M_2$ as in Definition~\ref{def:TensorProductSDMod}, we have the natural isomorphisms
 \begin{align*}
 T_{\cris, \star}(U(M_1\otimes M_2))&=T_{\st, \star}(M_1\otimes M_2)|_{G_{\infty}}=(T_1\otimes T_2)_{G_{\infty}}\\ &=T_1|_{G_{\infty}}\otimes_{\ZZ_p}T_2|_{G_{\infty}}=T_{\cris, \star}(U(M_1)\otimes U(M_2)),
 \end{align*}
 where $U$ is the functor forgetting the monodromy operator, and $U(M_1)\otimes U(M_2)$ denotes the tensor product in $\Mod(S)^{\varphi}_{r_1+r_2}$ as defined in \ref{def:TensorProductQuasiSDMod}. Since $T_{\cris, \star}$ is fully faithful, this implies that $U(M_1\otimes M_2)$ is naturally isomorphic to $U(M_1)\otimes U(M_2)$.

 In other words, Definition~\ref{def:TensorProductSDMod} supplements Definition~\ref{def:TensorProductQuasiSDMod} by specifying a monodromy operator on $U(M_1)\otimes U(M_2)$ in a manner compatible with \'{e}tale realization (to lattices with full $G_K$-action).

We note that if $N_i$ denotes the monodromy operator on $M_i,$ $i=1,2,$ then the monodromy operator $N$ on $M_1\otimes M_2$ is given by $N_1 \otimes 1+1\otimes N_2,$ as expected. This can be shown by a similar functor-yoga via the categories of filtered $(\varphi, N)$-modules over $K_0$ and $S_{K_0}$. We will not need this fact, and so we leave the details to the reader.  
\end{remark}

\begin{remark}
Recall that a strongly divisible module $M$ is called \textit{filtered free} if $\Fil^rM$ has the special form $$\Fil^rM=\bigoplus_i SE(u)^{t_i}e_i+\Fil^r(S)M$$ for some $t_i \in \{ 1, 2, \dots r \},$ where $\{e_i\}_i$ is a free basis of $M$. In \cite[\S~5.4]{GazakiTateDuality}, the second named author defined tensor product of Breuil modules in the case that both modules were filtered free, using the ``natural'' definition of filtrations and operations. In particular, the filtration on $M_1\otimes M_2$ is given by the sum of the image of $\Fil^{r_1}M_1 \otimes_S \Fil^{r_2}M_2$ and $\Fil^{r_1+r_2}(S)\cdot M_1 \otimes_S M_2$. It can be shown that this version of tensor product coincides with the one given in Definition~\ref{def:TensorProductSDMod} for filtered free strongly divisibile modules by viewing them as lattices in filtered $(\varphi, N)$-modules over $S_{K_0}$. In fact, it can be shown that this is the case when we assume that only one of the strongly divisible modules $M_1, M_2$ is filtered free. In this article Definition \ref{def:TensorProductSDMod} will be enough for our purposes, so we do not include a proof of this fact.
\end{remark}

\begin{theorem}\label{thm:tensorprodexact}
The tensor product of strongly divisible modules is exact. That is, suppose that $r_1, r_2$ are two integers with $0 \leq r_1, r_2 \leq r_1+r_2<p-1$, and consider $M_1 \in \Mod(S)^{\varphi, N}_{r_1, \log}$. Then for every short exact sequence $$0 \to M_2' \to M_2 \to M_2'' \to 0$$ in $\Mod(S)^{\varphi, N}_{r_2, \log}$, the sequence $$0 \to M_1 \otimes M_2' \to M_1 \otimes M_2 \to M_1 \otimes M_2'' \to 0$$
is a short exact sequence of strongly divisible modules. (The same holds for the version without monodromy operators.)
\end{theorem}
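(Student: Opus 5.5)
The plan is to reduce everything to Kisin modules, where the tensor product is just the tensor product of free $\Es$-modules. I would first treat the version without monodromy operators, and then deduce the log version from it.

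For the version without monodromy, take a short exact sequence $0 \to M_2' \to M_2 \to M_2'' \to 0$ in $\Mod(S)^{\varphi}_{r_2}$. By Theorem~\ref{thm:KisinToBreuilIsBiExact}, applying $\underline{\Em}^{(r_2)}_S$ gives a short exact sequence $0\to\Em_2'\to\Em_2\to\Em_2''\to 0$ in $\Mod_{\Es, \fr}^{\varphi, \leq r_2}$. Since all terms are free, this sequence is split as a sequence of $\Es$-modules. Set $\Em_1=\underline{\Em}^{(r_1)}_S(M_1)$. Tensoring with the free module $\Em_1$ over $\Es$ therefore preserves exactness of the underlying $\Es$-modules. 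The Frobenius structures are compatible because $\otimes$ is a functor on $\Mod_{\Es}^{\varphi}$. The tensored terms are free of height $\leq r_1+r_2$: the cokernel of $\varphi_{\lin}$ on a tensor product is killed by $E(u)^{r_1+r_2}$, because the image contains $E(u)^{r_1}\Em_1\otimes E(u)^{r_2}\Em_2$. So
$$0\to \Em_1\otimes\Em_2'\to\Em_1\otimes\Em_2\to\Em_1\otimes\Em_2''\to 0$$
is short exact in $\Mod_{\Es,\fr}^{\varphi,\leq r_1+r_2}$, which is an exact category by Remark~\ref{rem:Kisinexactcat}. Now apply $\mathcal{M}_{\Es}^{(r_1+r_2)}$, which is exact by Proposition~\ref{prop:KisinToBreuilEquivalence} and legitimate since $r_1+r_2<p-1$. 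By Definition~\ref{def:TensorProductQuasiSDMod}, this yields exactly the sequence $0\to M_1\otimes M_2'\to M_1\otimes M_2\to M_1\otimes M_2''\to0$, exact in $\Mod(S)^{\varphi}_{r_1+r_2}$, including exactness on $\Fil^{r_1+r_2}$.

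For the log version, the main point is to identify the underlying sequence without monodromy with the one just treated, naturally in the maps. By Remark~\ref{rem:TensorProductCompatibility}, there is a natural isomorphism $U(M_1\otimes M_2)\simeq U(M_1)\otimes U(M_2)$. It comes from full faithfulness of $T_{\cris,\star}$, which follows from Proposition~\ref{prop:PropertiesOfTrMr} together with Theorem~\ref{thm:QuasiInverseCompatibility} and Proposition~\ref{prop:KisinToBreuilEquivalence}. This isomorphism is natural in $M_2$, since both sides realize to $T_1\otimes T_2$ functorially and $T_{\cris,\star}$ is faithful. Hence applying $U$ to the sequence $M_1\otimes(\cdot)$ in $\Mod(S)^{\varphi,N}_{r_1+r_2,\log}$ gives, up to isomorphism of complexes, the sequence from the previous paragraph built from $U(M_2')\to U(M_2)\to U(M_2'')$. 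That underlying sequence is exact, because $U$ does not alter the underlying modules and filtrations of the original sequence. Exactness in $\Mod(S)^{\varphi,N}_{r_1+r_2,\log}$ depends only on the underlying $S$-modules and the $\Fil^{r_1+r_2}$ parts, so the log statement follows, as in the proof of Corollary~\ref{cor:ExactBreuilModules}.

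I expect the main obstacle to be this naturality check. One has to make sure that the maps $M_1\otimes M_2'\to M_1\otimes M_2$ defined via $\mathcal{M}_{\st}(T_1\otimes -)$ agree with those from Definition~\ref{def:TensorProductQuasiSDMod} under the identifications. I would handle it by comparing the two maps after applying $T_{\cris,\star}$, where both become $\mathrm{id}_{T_1}\otimes(\text{the given map})$. Faithfulness of $T_{\cris,\star}$ then forces the two maps to be equal.
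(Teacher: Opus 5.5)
Your proposal is correct and follows the paper's proof. You reduce the log case to the version without monodromy via $U$ and Remark~\ref{rem:TensorProductCompatibility}, pass to Kisin modules using the exactness of $\underline{\Em}^{(r)}_S$ (Theorem~\ref{thm:KisinToBreuilIsBiExact}), tensor there, and return via the exact functor $\mathcal{M}_{\Es}^{(r_1+r_2)}$; your explicit checks of the height bound and of the naturality in $M_2$ of $U(M_1\otimes M_2)\simeq U(M_1)\otimes U(M_2)$ fill in details the paper leaves implicit.
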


\begin{proof}
The forgetful functor $U: \Mod(S)_{r_1+r_2, \log}^{\varphi, N}\to \Mod(S)_{r_1+r_2}^{\varphi}$ preserves and reflects exactness, so in view of Remark~\ref{rem:TensorProductCompatibility}, it is enough to consider the version without monodromy operators. Now the Theorem is a consequence of the obvious fact that the tensor product of free Kisin modules is exact, together with exactness of $\mathcal{M}_{\Es}^{(r)}$ and its quasi-inverse (Theorem~\ref{thm:KisinToBreuilIsBiExact}). 
\end{proof}

\subsection{An application to Abelian Varieties} In this section we will prove Theorem \ref{thm:abvarsintro} as stated in the introduction. We start with a homological algebra construction.

\subsubsection{A cup product structure on $\Ext$ groups}
In this section we fix integers $r_1, r_2\geq 0$ such that $r_1+r_2<p-1$. To avoid confusion with the different weights we will denote by $\mathbf{1}_{r_i}$ the quadruple $(S, S, \phi, N)$ viewed as an object of $\Mod(S)_{r_i}^{\varphi, N}$. 

\begin{lemma}
   Using the definition of tensor product of strongly divisible modules given in \ref{def:TensorProductSDMod} there is an equality $\mathbf{1}_{r_1}\otimes \mathbf{1}_{r_2}=\mathbf{1}_{r_1+r_2}$. 
\end{lemma}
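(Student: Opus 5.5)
The plan is to unwind Definition~\ref{def:TensorProductSDMod} and reduce the claim to identifying a strongly divisible module by its étale realization. By definition,
$$\mathbf{1}_{r_1}\otimes \mathbf{1}_{r_2}=\mathcal{M}_{\st}\big(T_{\st,\star}(\mathbf{1}_{r_1})\otimes_{\ZZ_p} T_{\st,\star}(\mathbf{1}_{r_2})\big),$$
where $\mathcal{M}_{\st}$ is now the quasi-inverse of $T_{\st,\star}$ on $\Mod(S)^{\varphi,N}_{r_1+r_2,\log}$. So the first step is to compute $T_{\st,\star}(\mathbf{1}_{r_i})=\ZZ_p$, with trivial $G_K$-action, for each $i$. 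This is the Example following Definition~\ref{def_choice_Mst_}, applied in weight $r_i$: $T_{\st,\star}(\mathbf{1})=\mathbb{D}^{r_i}_{\fr}(T^\star_{\st}(\mathbf{1}))$, where $T^\star_{\st}(\mathbf{1})=\ZZ_p(r_i)$ (the morphisms $S\to\widehat{A}_{\st}$ compatible with the weight-$r_i$ structure), so its $r_i$-twisted dual is $\ZZ_p$. Hence the tensor product of the realizations is $\ZZ_p\otimes_{\ZZ_p}\ZZ_p=\ZZ_p$.

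The second step is to show that $\mathcal{M}_{\st}(\ZZ_p)\simeq \mathbf{1}_{r_1+r_2}$ in weight $r_1+r_2$. The same Example, now with $r=r_1+r_2<p-1$, gives $T_{\st,\star}(\mathbf{1}_{r_1+r_2})=\ZZ_p$. Since $T_{\st,\star}$ is an equivalence (Theorem~\ref{thm:Liu} composed with the exact anti-equivalence $\mathbb{D}^{r}_{\fr}$), the quasi-inverse gives a canonical isomorphism $\mathcal{M}_{\st}(\ZZ_p)\simeq\mathbf{1}_{r_1+r_2}$. The equality in the lemma is to be read as this canonical isomorphism, since $\mathcal{M}_{\st}$ is only defined up to the choice made in Definition~\ref{def_choice_Mst_}. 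If one wants a literal equality, one can arrange the choice of quasi-inverse so that $\mathcal{M}_{\st}(\ZZ_p)=\mathbf{1}$.

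The only point needing care is the weight-dependence of the computation $T_{\st,\star}(\mathbf{1}_r)=\ZZ_p$. I would check this directly by computing $T^\star_{\st}(\mathbf{1}_r)=\Hom(\mathbf{1}_r,\widehat{A}_{\st})$. Such a morphism $f$ is determined by $a=f(1)\in\widehat{A}_{\st}$, and the conditions are:
\begin{itemize}
\item $a\in\Fil^r\widehat{A}_{\st}$ (because $\Fil^r\mathbf{1}_r=S$);
\item $\varphi_r(a)=\varphi(1)=1$;
\item $N(a)=0$.
\end{itemize}
This set is the standard $\ZZ_p(r)$, generated by $t^{\{r\}}$ up to a unit, which is Breuil's computation already cited in the Example. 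After twisting the dual by $(r)$ one obtains the trivial character.

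As a cross-check I would also verify the statement on the Kisin side, using Definition~\ref{def:TensorProductQuasiSDMod} and Remark~\ref{rem:TensorProductCompatibility}. There $\underline{\Em}^{(r_i)}_S(\mathbf{1}_{r_i})=\Es\{-r_i\}$ by the Breuil--Kisin twist Example, and $\Es\{-r_1\}\otimes\Es\{-r_2\}=\Es\{-(r_1+r_2)\}$ by definition. Finally $\mathcal{M}^{(r_1+r_2)}_{\Es}(\Es\{-(r_1+r_2)\})=\mathbf{1}$, which confirms the identification after forgetting $N$.
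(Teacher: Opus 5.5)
Your argument is correct and is essentially the paper's own primary proof: you unwind Definition~\ref{def:TensorProductSDMod}, use $T_{\st,\star}(\mathbf{1}_{r_i})=\ZZ_p$ in each weight, and conclude from $\ZZ_p\otimes_{\ZZ_p}\ZZ_p=\ZZ_p=T_{\st,\star}(\mathbf{1}_{r_1+r_2})$ (your Kisin-side cross-check plays the role of the paper's alternative remark that $\Fil^{r_1}\mathbf{1}_{r_1}\otimes_S\Fil^{r_2}\mathbf{1}_{r_2}=S$ with diagonal Frobenius). One small slip in your direct check: Frobenius compatibility gives $\varphi_r(a)=f(\varphi(1))=f(1)=a$, i.e.\ $\varphi(a)=p^r a$, not $\varphi_r(a)=1$ (which would not even define a $\ZZ_p$-module); with that correction you do obtain $\ZZ_p\, t^{\{r\}}\simeq\ZZ_p(r)$ as claimed.
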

\begin{proof}
This is a consequence of the fact that $\ZZ_p\otimes\ZZ_p=\ZZ_p$ in $\Rep_{\ZZ_p, \st}^{[0,r_1+r_2]}(G_K)$. Alternatively, recall that $\Fil^{r_1+r_2}(\mathbf{1}_{r_1}\otimes \mathbf{1}_{r_2})$ contains $\Fil^{r_1}(\mathbf{1}_{r_1})\otimes_S \Fil^{r_2}(\mathbf{1}_{r_2})=S\otimes_S S=S$ and the Frobenius on this piece is defined diagonally. 
\end{proof}

The goal of this section is to prove the following theorem. 
\begin{theorem}\label{thm:cupproduct}
  For $i=1,2$ let $M_i\in \Mod(S)_{r_i}^{\varphi, N}$ be strongly divisible modules. There is a cup product structure 
\[\Ext^1_{\Mod(S)_{r_1}^{\varphi, N}}(\mathbf{1}_{r_1},M_1)\otimes \Ext^1_{\Mod(S)_{r_2}^{\varphi, N}}(\mathbf{1}_{r_1},M_2)\to\Ext^2_{\Mod(S)_{r_1+r_2}^{\varphi, N}}(\mathbf{1}_{r_1+r_2},M_1\otimes M_2),\] where $M_1\otimes M_2$ is the tensor product defined in \ref{def:TensorProductSDMod}.  
\end{theorem}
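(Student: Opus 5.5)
The plan is to build the cup product directly on Yoneda extensions, using the exactness of the tensor product (Theorem~\ref{thm:tensorprodexact}) and the identification $\mathbf{1}_{r_1}\otimes\mathbf{1}_{r_2}=\mathbf{1}_{r_1+r_2}$ from the preceding lemma. Take classes represented by extensions
\[
\epsilon_1:\ 0\to M_1\to E_1\to \mathbf{1}_{r_1}\to 0,\qquad
\epsilon_2:\ 0\to M_2\to E_2\to \mathbf{1}_{r_2}\to 0
\]
in $\Mod(S)_{r_1}^{\varphi,N}$ and $\Mod(S)_{r_2}^{\varphi,N}$. The first step is to check that the crystalline subcategories are stable under the tensor product of Definition~\ref{def:TensorProductSDMod}. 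This holds because $\mathcal{M}_{\st}$ sends crystalline lattices to crystalline strongly divisible modules (Theorem~\ref{thm:Liu}), and a tensor product of crystalline lattices is crystalline. Exactness then transfers to the crystalline exact structure.

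Next, tensor $\epsilon_1$ on the right with $M_2$ and, separately, tensor $\epsilon_2$ on the left with $\mathbf{1}_{r_1}$. Theorem~\ref{thm:tensorprodexact}, together with symmetry of $\otimes$ (which follows from symmetry of the tensor product of lattices via $T_{\st,\star}$), gives short exact sequences in $\Mod(S)_{r_1+r_2}^{\varphi,N}$:
\[
0\to M_1\otimes M_2\to E_1\otimes M_2\to \mathbf{1}_{r_1}\otimes M_2\to 0,\qquad
0\to \mathbf{1}_{r_1}\otimes M_2\to \mathbf{1}_{r_1}\otimes E_2\to \mathbf{1}_{r_1}\otimes\mathbf{1}_{r_2}\to 0.
\]
Splicing them along $\mathbf{1}_{r_1}\otimes M_2$ produces a $2$-extension of $\mathbf{1}_{r_1+r_2}$ by $M_1\otimes M_2$, and we define $\epsilon_1\cup\epsilon_2$ to be its Yoneda class. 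A side check is needed here: for the objects with weight bounded by $r_2$ and $r_1$, the twisted objects $\mathbf{1}_{r_1}\otimes M_2$ and $M_1\otimes\mathbf{1}_{r_2}$ really live in weight $r_1+r_2$. This is automatic from Definition~\ref{def:TensorProductSDMod}, since on lattices it is simply $T_2$ regarded with Hodge--Tate weights in $[0,r_1+r_2]$.

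Then I would show the construction is well defined and bilinear. If $\epsilon_1\to\epsilon_1'$ is a morphism of extensions fixing the end terms, functoriality of $\otimes$ gives a morphism of the spliced $2$-extensions fixing the end terms, so the Yoneda class is unchanged. The same argument applies in $\epsilon_2$. For bilinearity, note that Baer sums are formed by pullback along a diagonal and pushout along a codiagonal. Since $-\otimes M_2$ is an exact additive functor, it commutes with the pullbacks and pushouts involved: they are computed via exact sequences, and an exact functor between exact categories preserves them. Hence the map respects Baer sums in each variable and descends to the tensor product of $\Ext^1$ groups.

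The main technical obstacle is the claim that an exact functor preserves Yoneda classes and Baer sums in these non-abelian exact categories. In particular, one must check that $-\otimes M_2$ carries admissible monos and epis to admissible ones and preserves the relevant pushout/pullback squares. I would handle this by transporting everything through the exact equivalences $T_{\st,\star}$ and, for the underlying exactness, through Kisin modules via Theorem~\ref{thm:KisinToBreuilIsBiExact}. There $\otimes$ is the honest tensor product of free modules and the verification is formal.
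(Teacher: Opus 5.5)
Your construction is the paper's: the paper also defines $\alpha\cup\beta$ as the Yoneda composite $(\alpha\otimes M_2)\circ(\mathbf{1}_{r_1}\otimes\beta)$, using Theorem~\ref{thm:tensorprodexact} and $\mathbf{1}_{r_1}\otimes\mathbf{1}_{r_2}=\mathbf{1}_{r_1+r_2}$, and it leaves well-definedness and bilinearity as routine checks, which you sketch correctly; your extra check that crystalline objects are stable under $\otimes$ is a useful point the paper leaves implicit. The step you call the main obstacle is in fact formal. An additive functor that preserves conflations automatically preserves pushouts along admissible monos and pullbacks along admissible epis, because in an exact category these squares are characterized by a conflation $A\to B\oplus C\to D$. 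So you do not need to transport anything through $T_{\st,\star}$, whose quasi-inverse is not exact in any case.
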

Here $\{\Ext^j_{\Mod(S)_{r_i}^{\varphi, N}}\}_{j\geq 0}$ are the Yoneda $\Ext$ functors in the exact category $\Mod(S)_{r_i}^{\varphi, N}$. We refer to \cite[Proposition A.13]{Positselski} for details. Morally, most standard properties of $\Ext$-groups in an abelian category carry through to the exact category setting if one imposes the correct definitions. We will only recall those properties needed to us, omitting some of the technical details.  

For us an exact category $\mathcal{C}$ is an additive category equipped with short exact sequences of objects (often called \textit{admissible triples}) $0\to X'\to X\to X''\to 0$ verifying the Axioms described in \cite[\S A.3]{Positselski}. 

\begin{definition}\label{def:exactseq} (cf.~\cite[\S A.7]{Positselski})
    Let $\mathcal{C}$ be an exact category. A complex $X^\bullet=\{ \cdots \to X^{i-1}\xrightarrow{d^{i-1}} X^i\xrightarrow{d^i} X^{i+1}\to\cdots\}$ in $\mathcal{C}$ is said to be exact if there exist objects $Z^i$ in $\mathcal{C}$ and short exact sequences $0\to Z^i\to X^i\to Z^{i+1}\to 0$ such that for every $i$ the differential $X^i\xrightarrow{d^i} X^{i+1}$ is equal to the composition $X^i\to Z^{i+1}\to X^{i+1}$. 
\end{definition}

\begin{remark}
 Applying the above definition to the category $\Mod(S)_r^{\varphi, N}$, we see that a complex    $\cdots\to  M_{i-1}\xrightarrow{d^{i-1}} M_i\xrightarrow{d^i} M_{i+1}\to\cdots$ of strongly divisible modules is exact in $\Mod(S)^{\varphi, N}_r$ if  and only if  $\ker(d^i), \img(d^i)$ lie in the category for all $i$ and satisfy $\img(d^{i-1})=\ker(d^i)$, and additionally, we have short exact sequences 
 $0\to\Fil^r(\ker(d^i))\to \Fil^r M_i\to\Fil^r(\img(d^i))\to 0$ for all $i$. 
\end{remark}

\subsection*{Composition of extensions} 
For the construction of cup product we will need the notion of composition of extensions, which we recall below.

\begin{definition} (cf. \cite[\S7, 4.Prop.~3~pg.~118]{BourAlg10} or \cite[Chapter III, \S5 (5.1) pg.~82]{MacLane})
    Let $\mathcal{C}$ be an exact category. Let $A, B, C\in\mathcal{C}$. Then there is a well-defined \textit{composition} of extensions \[\Ext_\mathcal{C}^n(C,A)\times \Ext^m_\mathcal{C}(B,C)\to \Ext^{n+m}_\mathcal{C}(B, A),\]
defined as follows. Let $\alpha: 0\to A\to X_1\to\cdots \to X_n\to C\to 0$ and $\beta: 0\to C\to Y_1\to\cdots\to Y_{m}\to B\to 0$ be (classes of) exact sequences. We abbreviate notation denoting $\alpha: 0\to A\to \underline{X}\to C\to 0$, $\beta: 0\to C\to \underline{Y}\to B\to 0$.  We consider the concatenation 

\[
\begin{tikzcd}[column sep = small]
0 \ar[rr]& & A \ar[rr] && \underline{X} \ar[dr] && \underline{Y} \ar[rr] && B \ar[rr]&&0\\
&&&&& C \ar[dr]\ar[ur] &&&&&\\
&&&& 0 \ar[ur]&& 0 &&&&\\
\end{tikzcd}
\]

Then $\alpha\circ\beta$ is to defined to be the class of the $n+m$-extension 
\[\alpha\circ\beta: 0\to A\to X_1\to \cdots\to X_n\to Y_1\to\cdots\to  Y_m\to B\to 0.\]
\end{definition}

Combining this with Definition \ref{def:exactseq} we see that a sequence $0\to X_1\to\cdots\to X_n\to 0$ in $\mathcal{C}$ is exact if we can write it as a composition of $1$-extensions. 

\begin{proof} (of Theorem \ref{thm:cupproduct})
We consider (classes of) $1$-extensions, \begin{eqnarray*}
    \alpha: && 0\to M_1\xrightarrow{\iota_1} X_1\xrightarrow{g_1} \mathbf{1}_{r_1}\to 0,\\
    \beta: && 0\to M_2\xrightarrow{\iota_2} X_2\xrightarrow{g_2} \mathbf{1}_{r_2}\to 0
\end{eqnarray*}
 in $\Mod(S)_{r_1}^{\varphi, N}$ and $\Mod(S)_{r_2}^{\varphi, N}$ respectively. It follows by Proposition \ref{thm:tensorprodexact} that the sequences
\begin{eqnarray*}
   \alpha\otimes M_2:  && 0\to M_1\otimes M_2\xrightarrow{\iota_1\otimes 1} X_1\otimes M_2\xrightarrow{g_1\otimes 1} \mathbf{1}_{r_1}\otimes M_2\to 0\\
   \mathbf{1}_{r_1}\otimes \beta: && 0\to \mathbf{1}_{r_1}\otimes M_2\xrightarrow{1\otimes \iota_2} \mathbf{1}_{r_1}\otimes X_2\xrightarrow{1\otimes g_2}\mathbf{1}_{r_1}\otimes \mathbf{1}_{r_2}\to 0   \end{eqnarray*}
are exact in $\Mod(S)_{r_1+r_2}^{\varphi, N}$. Then we define $\alpha\cup\beta$ to be the composition $(\alpha\otimes M_2)\circ(\mathbf{1}_{r_1}\otimes \beta)$. That is, the class of the $2$-extension 
\[\alpha\cup\beta:  0\to M_1\otimes M_2\xrightarrow{\iota_1\otimes 1} X_1\otimes M_2\xrightarrow{(1\otimes\iota_2)\circ(g_1\otimes 1)} \mathbf{1}_{r_1}\otimes X_2\xrightarrow{1\otimes g_2}\mathbf{1}_{r_1+r_2}\to 0.\]
\end{proof}

\begin{remark}
One needs to check that the above construction is: 

(a) well-defined, i.e. it does not depend on the choice of representatives for $\alpha, \beta$; 

(b)  bilinear with respect to the Baer sum.

Both these are routine checks by adjusting the analogous results for abelian to exact categories.  To give a flavor of this type of proof, see the proof of the following Lemma \ref{lem:welldefinedcup}. 
\end{remark}

Note that we could have defined $\alpha\cup\beta$ to be instead the class of the composition $(M_1\otimes\beta)\circ(\alpha\otimes\mathbf{1}_{r_2})$. The following lemma takes care of this ambiguity. 

\begin{lemma}\label{lem:welldefinedcup}
    Consider the set-up in the proof of Theorem \ref{thm:cupproduct}. The $2$-extensions $(M_1\otimes\beta)\circ\alpha$ and $(\alpha\otimes M_2)\circ \beta$ are equivalent in $\Mod(S)_{r_1+r_2}^{\varphi, N}$.
\end{lemma}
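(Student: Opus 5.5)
The plan is to compare both $2$-extensions with a single "symmetric" $2$-extension built from $X_1\otimes X_2$, and to exhibit morphisms of $2$-extensions (identity on the end terms) from it to each of them. This suffices because Yoneda equivalence of $n$-extensions in an exact category is the equivalence relation generated by such morphisms (\cite[\S A.7]{Positselski}, \cite[Ch.~III]{MacLane}).

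Throughout I use the identifications $\mathbf{1}_{r_1}\otimes M_2=M_2$, $M_1\otimes\mathbf{1}_{r_2}=M_1$, $\mathbf{1}_{r_1}\otimes X_2=X_2$, $X_1\otimes\mathbf{1}_{r_2}=X_1$ given by the preceding Lemma and the definition of $\otimes$ through $T_{\st,\star}$. With these, the two $2$-extensions are
\[(\alpha\otimes M_2)\circ\beta:\; 0\to M_1\otimes M_2\xrightarrow{\iota_1\otimes1}X_1\otimes M_2\to X_2\xrightarrow{g_2}\mathbf{1}\to0,\]
\[(M_1\otimes\beta)\circ\alpha:\; 0\to M_1\otimes M_2\xrightarrow{1\otimes\iota_2}M_1\otimes X_2\to X_1\xrightarrow{g_1}\mathbf{1}\to0,\]
where the middle maps are $\iota_2\circ(g_1\otimes1)$ and $\iota_1\circ(1\otimes g_2)$.

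\textbf{Step 1: the symmetric $2$-extension.} Let $P=X_1\otimes M_2\oplus M_1\otimes X_2$ and form
\[\Sigma:\;0\to M_1\otimes M_2\xrightarrow{(\iota_1\otimes1,\;\pm1\otimes\iota_2)}P\xrightarrow{d}X_1\otimes X_2\xrightarrow{g_1\otimes g_2}\mathbf{1}\to0,\]
with $d=(1\otimes\iota_2)\mp(\iota_1\otimes1)$ chosen so that $\Sigma$ is a complex.

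\textbf{Step 2: $\Sigma$ is exact in $\Mod(S)^{\varphi,N}_{r_1+r_2}$.} I would apply the exact tensor product (Theorem~\ref{thm:tensorprodexact}) to $\alpha\otimes\beta$. This exhibits $\ker(g_1\otimes g_2)$ as the pushout of $X_1\otimes M_2\leftarrow M_1\otimes M_2\rightarrow M_1\otimes X_2$. Pushouts along admissible monos exist in the exact category $\Mod(S)^{\varphi,N}_{r_1+r_2}$, and they yield admissible short exact sequences. So $\Sigma$ is a splice of two admissible short exact sequences in the sense of Definition~\ref{def:exactseq}, including exactness on $\Fil^{r_1+r_2}$. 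One could alternatively check exactness after passing to Kisin modules via the bi-exact $\underline{\Em}^{(r)}_S$ of Theorem~\ref{thm:KisinToBreuilIsBiExact}, where it is the standard statement for free modules.

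\textbf{Step 3: the two comparison morphisms.} For the map $\Sigma\to(\alpha\otimes M_2)\circ\beta$, I take the identity on $M_1\otimes M_2$, the projection $\mathrm{pr}_1$ on $P$, the map $g_1\otimes1$ on $X_1\otimes X_2$, and the identity on $\mathbf{1}$. The three squares are checked directly from $g_1\iota_1=0$. For the map $\Sigma\to(M_1\otimes\beta)\circ\alpha$, I take $\pm\mathrm{pr}_2$ on $P$ and $\pm(1\otimes g_2)$ on $X_1\otimes X_2$. All of these are morphisms in $\Mod(S)^{\varphi,N}_{r_1+r_2}$ because they are tensor products of morphisms, and $\otimes$ is a functor compatible with these identifications.

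\textbf{Main obstacle: the sign.} I expect the main difficulty to be making the two sign choices compatible. Whichever sign is chosen in $d$, one of the two ladders risks commuting only after multiplying the end map on $\mathbf{1}$ by $-1$, which is the familiar graded-commutativity sign in Yoneda products. To get equivalence on the nose, my first attempt is to exploit the freedom in the identifications $\mathbf{1}_{r_1}\otimes X_2\cong X_2$ and $X_1\otimes\mathbf{1}_{r_2}\cong X_1$ used to define the composites in the proof of Theorem~\ref{thm:cupproduct}. I would track exactly which isomorphism the functorial definition of $\otimes$ (via $\mathcal{M}_{\st}$ and $T_{\st,\star}$) produces, and check whether it carries a compensating sign. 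Failing that, I would insert an intermediate $2$-extension in the zig-zag to absorb the sign. I would verify this step very carefully before asserting the equivalence.
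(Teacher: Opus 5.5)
Steps 1--3 are fine as far as they go. The step you flagged, however, is a genuine gap that cannot be closed: as written the statement is false, because the two $2$-extensions are negatives of each other. Put $E_A=(\alpha\otimes M_2)\circ(\mathbf{1}_{r_1}\otimes\beta)$ and $E_B=(M_1\otimes\beta)\circ(\alpha\otimes\mathbf{1}_{r_2})$. Your construction already proves $E_A=-E_B$. Take the first map of $\Sigma$ to be $(\iota_1\otimes 1,1\otimes\iota_2)$ and $d=(1\otimes\iota_2)-(\iota_1\otimes 1)$. The ladder $\Sigma\to E_A$ via $\mathrm{pr}_1$ and $g_1\otimes 1$ has identity ends, so $\Sigma=E_A$. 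For $\Sigma\to E_B$, the left square forces $+\mathrm{pr}_2$. The middle square then forces $-(1\otimes g_2)$, and the right square then forces $-\mathrm{id}$ on $\mathbf{1}_{r_1}\otimes\mathbf{1}_{r_2}$. Hence $\Sigma=-E_B$, and the other sign choice in $\Sigma$ gives the same conclusion.

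Neither of your remedies can help. Both composites are spliced along literally the same object ($\mathbf{1}_{r_1}\otimes M_2$, resp.\ $M_1\otimes\mathbf{1}_{r_2}$) and have the same end terms $M_1\otimes M_2$ and $\mathbf{1}_{r_1}\otimes\mathbf{1}_{r_2}$. So no identification enters, and transporting a middle term along an isomorphism never changes the class. A zig-zag of morphisms with identity ends preserves the Yoneda class, so an intermediate extension could only absorb the sign if $2E_A=0$.

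This fails in general. Take $p\geq 5$, $K=\QQ_p(\zeta_p)$, $M_1=M_2=\mathcal{M}_{\st}(\ZZ_p(1))$, $r_1=r_2=1$, and let $\alpha,\beta$ correspond via Theorem~\ref{Th:isomorphism_ext} to Kummer classes $a,b$ of units $u,v$. Apply the exact, tensor-compatible functor $T_{\st,\star}$ and pass to $H^2(K,\ZZ_p(2))$. Then $E_A\mapsto\pm\,a\cup b$ and $E_B\mapsto\pm\,\tau_*(b\cup a)=\mp\,a\cup b$, since the swap $\tau$ is the identity on $\ZZ_p(1)^{\otimes 2}$. Modulo $p$, $a\cup b$ is the Hilbert symbol $(u,v)_p$, which is nontrivial for suitable units (e.g.\ $(\zeta_p,1+p)_p\neq 1$ by the Artin--Hasse formula). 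Since $p$ is odd, $2(a\cup b)\neq 0$, hence $E_A\neq E_B$.

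For comparison, the paper's proof uses the dual symmetric $2$-extension $0\to M_1\otimes M_2\to X_1\otimes X_2\xrightarrow{\mu}X_1(r_2)\oplus X_2(r_1)\to\mathbf{1}_{r_1+r_2}\to 0$ and maps both composites into it, rather than mapping your $\Sigma$ onto them. It runs into the same sign without addressing it. With $\mu=(\mu_1,\mu_2)$ as defined there, the last map must be $(y_1,y_2)\mapsto g_1(y_1)-g_2(y_2)$ (up to an overall sign) for the middle row to be a complex; the image of $\mu$ is the fibre product $X_1\times_{\mathbf{1}}X_2$, not $X_1(r_2)\oplus M_2(r_1)$. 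With that map, one of the two right-hand squares commutes only up to $-1$.

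So what both arguments actually establish, once signs are tracked, is the graded-commutativity relation $E_A=-E_B$, and that is the statement you should prove. This is harmless for Theorem~\ref{thm:cupproduct} and Corollary~\ref{cor:abelianvars}. There $\alpha\cup\beta$ is defined by the fixed convention $E_A$, and choosing the other composite only changes $\cup$ by a global sign.
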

\begin{proof}
    We recall (cf.~\cite[Proposition A.13]{Positselski}) that two $2$-extensions $0\to A\to X_1\to X_2\to B\to 0$ and $A\to Y_1\to Y_2\to B\to 0$ in an exact category $\mathcal{C}$ are Yoneda equivalent if there exists another $2$-extension $ 0\to A\to Z_1\to Z_2\to B\to 0$ in $\mathcal{C}$ such that we have a commutative diagram 
    \[
\begin{tikzcd}[column sep = small]
0 \ar{r} & A\ar{r} \ar{d}{=} & X_1 \ar{r} \ar{d} & X_2 \ar{r} \ar{d} & B \ar{r} \ar{d}{=} & 0 \\
0 \ar{r} & A\ar{r} & Z_1 \ar{r} & Z_2 \ar{r} & B \ar{r} & 0 \\
0 \ar{r} & A\ar{r} \ar{u}{=} & Y_1 \ar{r} \ar{u} & Y_2 \ar{r} \ar{u} & B \ar{r} \ar{u}{=} & 0.
\end{tikzcd} 
\] 
We will construct such a diagram for the $2$-extensions $(M_1\otimes\beta)\circ\alpha$ and $(\alpha\otimes M_2)\circ \beta$. To abbreviate notation we will denote by $X_1(r_2)$ the strongly divisible module $X_1\otimes\mathbf{1}_{r_2}$, and similarly $X_2(r_1)$. We also relabel the morphisms as follows: $0\to M_1\otimes M_2\xrightarrow{\lambda_1}X_1\otimes M_2\xrightarrow{f_1}X_2(r_1)\xrightarrow{\gamma_1}\mathbf{1}_{r_1+r_2}\to 0$ and similarly for the extension $(\alpha\otimes M_2)\circ \beta$. We consider the diagram: 

\[\begin{tikzcd}
0 \ar{r} & M_1\otimes M_2\ar{r}{\lambda_1} \ar{d}{=} & X_1\otimes M_2 \ar{r}{f_1} \ar{d}{j_1} &  X_2(r_1) \ar{r}{\gamma_1} \ar{d}{(0,1)}  & \mathbf{1}_{r_1+r_2} \ar{r} \ar{d}{=} & 0 \\
0 \ar{r} & M_1\otimes M_2\ar{r}   & X_1\otimes X_2 \ar{r}{\mu}  & X_1(r_2)\oplus X_2(r_1)\ar{r}{\gamma_2\oplus\gamma_1}   & \mathbf{1}_{r_1+r_2} \ar{r}   & 0 \\
0 \ar{r} & M_1\otimes M_2\ar{r}{\lambda_2} \ar{u}{=} & M_1\otimes X_2 \ar{r}{f_2} \ar{u}{j_2}  & X_1(r_2) \ar{r}{\gamma_2} \ar{u}{(1,0)} & \mathbf{1}_{r_1+r_2} \ar{r} \ar{u}{=} & 0,
\end{tikzcd} 
\] where $j_1$ (resp. $j_2$) is the inclusion $X_1\otimes M_2\hookrightarrow X_1\otimes X_2$ (resp. $M_1\otimes X_2\hookrightarrow X_1\otimes X_2$). The map $\mu=(\mu_1,\mu_2)$ is given coordinate-wise by $\mu_1(x_1\otimes x_2)=g_2(x_2)\cdot x_1$, and $\mu_2(x_1\otimes x_2)=g_1(x_1)\cdot x_2$. To make this definition precise, recall that $g_1: X_1\to S$ is $S$-linear map. Since $X_2$ is an $S$-module, the definition $\mu_2(x_1\otimes x_2)=g_1(x_1)\cdot x_2$ makes sense, and similarly for $\mu_1$. 

It is easy to check the commutativity of the diagram. Additionally, the homomorphisms $\mu, \gamma_2\oplus\gamma_1$ are filtered. To see this, note that $X_1(r_2)$ is precisely the quadruple $(\tilde{X}_2,\Fil^{r_1+r_2}\tilde{X}_2,\tilde{\varphi}_{r_1+r_2},\tilde{N})$, where $\tilde{X}_2=X_2$, $\Fil^{r_1+r_2}\tilde{X}_2=\Fil^{r_2}X_2$, $\tilde{\varphi}_{r_1+r_2}=\varphi_{r_2}$ on $\Fil^{r_2}X_2$ etc. 

It remains to check that the sequence $0\to M_1\otimes M_2\otimes X_1\otimes X_2\otimes X_1(r_2)\oplus X_2(r_1)\to \mathbf{1}_{r_1+r_2}$ is exact in $\Mod(S)_{r_1+r_2}^{\varphi, N}$. This follows because it is the composition of the short exact sequences $0\to M_1\otimes M_2\to X_1\otimes X_2\to X_1(r_2)\oplus M_2(r_1)\to 0$ and $0\to X_1(r_2)\oplus M_2(r_1)\to 0\to X_1(r_2)\oplus X_2(r_1)\to\mathbf{1}_{r_1+r_2}\to 0$. 
\end{proof}

\subsubsection{The cup product map on abelian varieties}
Let $A_1, A_2$ be abelian varieties over $K$ with good reduction with $p$-adic Tate modules $T_1=T_p(A_1)$ and $T_2=T_p(A_2)$. Note that $T_1, T_2$ are objects of $\Rep_{\ZZ_p,\cris}^{[0,1]}(G_K)$. For $i=1,2$ let $\mathcal{M}_{\st}(T_i)$ be the corresponding strongly divisible module, an object of $\Mod(S)_{1}^{\varphi, N}$. 

For every $n\geq 1$ the Kummer sequence for $A_i$ gives a connecting homomorphism 
$\delta_i: A_i(K)/p^n\to H^1(K, A_i[p^n])$, which induce homomorphism 
\[\delta_i:A(K)\to H^1(K, T_i).\]
It follows by \cite[p.~352]{BlochKato} that the image of $\delta_i$ is precisely $H^1_f(K, T_i)$. The results of the previous sections yield the following Corollary.

\begin{corollary}\label{cor:abelianvars}
    The homomorphism $\delta_1\cup\delta_2: A_1(K)\otimes A_2(K)\to H^2(K, T_1\otimes T_2)$ induced by the cup product factors through $\Ext^2_{\Mod(S)_{2}^{\varphi, N}}(\mathbf{1}_2,\mathcal{M}_{\st}(T_1)\otimes \mathcal{M}_{\st}(T_2))$. 
\end{corollary}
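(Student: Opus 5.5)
Since $\delta_i$ lands in $H^1_f(K,T_i)$, the map $\delta_1\cup\delta_2$ factors as
\[
A_1(K)\otimes A_2(K)\xrightarrow{\delta_1\otimes\delta_2} H^1_f(K,T_1)\otimes H^1_f(K,T_2)\xrightarrow{\cup} H^2(K,T_1\otimes T_2).
\]
The plan is to route the cup product through the $\Ext$ groups of strongly divisible modules by a commutative square. Here $T_1,T_2\in\Rep_{\ZZ_p,\cris}^{[0,1]}(G_K)$ and $r_1=r_2=1$. We need $r_1+r_2=2<p-1$, i.e.\ $p\geq 5$, for the tensor product to be defined; I would state this as a standing assumption. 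By Theorem~\ref{Th:isomorphism_ext} with $r=1$ we have $H^1_f(K,T_i)\simeq\Ext^1_{\Mod(S)_1^{\varphi,N}}(\mathbf{1}_1,\mathcal{M}_{\st}(T_i))$. Composing $(\delta_1\otimes\delta_2)$ with these isomorphisms and with the cup product of Theorem~\ref{thm:cupproduct} gives a map
\[
A_1(K)\otimes A_2(K)\to \Ext^2_{\Mod(S)_2^{\varphi,N}}(\mathbf{1}_2,\mathcal{M}_{\st}(T_1)\otimes\mathcal{M}_{\st}(T_2)).
\]
It remains to construct a map $\Theta$ from this $\Ext^2$ to $H^2(K,T_1\otimes T_2)$ and check that the resulting square commutes.

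To define $\Theta$, apply the exact functor $T_{\st,\star}$ (Corollary~\ref{cor:exactness1}). Since it is exact, it sends exact complexes, and hence Yoneda $2$-extensions, in $\Mod(S)_2^{\varphi,N}$ to $2$-extensions in $\Rep_{\ZZ_p,\cris}^{[0,2]}(G_K)$, compatibly with Yoneda equivalence. By Definition~\ref{def:TensorProductSDMod} we have $T_{\st,\star}(\mathcal{M}_{\st}(T_1)\otimes\mathcal{M}_{\st}(T_2))\simeq T_1\otimes T_2$ and $T_{\st,\star}(\mathbf{1}_2)=\ZZ_p$, so this yields a map into $\Ext^2_{\Rep_{\ZZ_p,\cris}(G_K)}(\ZZ_p,T_1\otimes T_2)$. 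Composing with the forgetful map to Yoneda $\Ext^2$ in the abelian category of finitely generated $\ZZ_p$-modules with continuous $G_K$-action, and then with the standard comparison of Yoneda $\Ext^n(\ZZ_p,-)$ with continuous Galois cohomology $H^n(K,-)$, gives $\Theta$ with values in $H^2(K,T_1\otimes T_2)$.

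For commutativity, take classes $\alpha,\beta$ of $1$-extensions of strongly divisible modules. The image of $\alpha\cup\beta$ under $T_{\st,\star}$ is the Yoneda splice $(T_{\st,\star}\alpha\otimes T_2)\circ(\ZZ_p\otimes T_{\st,\star}\beta)$. This uses that $T_{\st,\star}$ commutes with $\otimes$ functorially in morphisms, not only on objects. That is true because $\otimes$ is defined by transport through $\mathcal{M}_{\st}$, and the same holds for the maps $\iota\otimes 1$ and $g\otimes 1$. In Galois cohomology, the classical fact is that splicing $\alpha\otimes T_2$ with $\ZZ_p\otimes\beta$ represents the cup product of the classes of $\alpha$ and $\beta$, up to the usual sign convention. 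Combined with the compatibility of the isomorphism of Theorem~\ref{Th:isomorphism_ext} with $T_{\st,\star}$, which holds by construction, this shows that the square commutes.

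I expect the main obstacle to be this last identification: checking that the Yoneda product of $\Ext^1$ classes in $\ZZ_p[G_K]$-modules agrees with the continuous cochain cup product in $H^2$. For finite coefficients this is standard (Yoneda $\Ext$ equals derived-functor $\Ext$, and Yoneda products equal cup products). For $\ZZ_p$-lattices I would reduce modulo $p^n$ and pass to the inverse limit, using that the groups $H^1(K,T/p^n)$ are finite, so Mittag-Leffler holds and continuous cohomology is the limit of the finite-level groups. The sign ambiguity does not affect the factorization statement.
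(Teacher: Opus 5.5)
Your proposal is correct and follows essentially the same route as the paper. The paper's proof is the same commutative diagram: it passes through the isomorphisms of Theorem~\ref{Th:isomorphism_ext}, the cup product of Theorem~\ref{thm:cupproduct}, the identification $\mathcal{M}_{\st}(T_1)\otimes\mathcal{M}_{\st}(T_2)=\mathcal{M}_{\st}(T_1\otimes T_2)$, the exact functor $T_{\st,\star}$, and the forgetful map to $\Ext^2_{\Rep_{\ZZ_p}(G_K)}(\ZZ_p,T_1\otimes T_2)\simeq H^2(K,T_1\otimes T_2)$.

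Your version is more careful than the paper's on two points the paper leaves implicit. You state the standing hypothesis $p\geq 5$, which is needed for $r_1+r_2=2<p-1$. You also justify the comparison of Yoneda products with continuous cochain cup products by reducing modulo $p^n$ and passing to the limit.
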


\begin{proof}
    Theorem \ref{Th:isomorphism_ext} gives isomorphisms $H^1_f(K, T_i)\simeq \Ext^1_{\Mod(S)_{1}^{\varphi, N}}(\mathbf{1}_1, \mathcal{M}_{\st}(T_i))$ for $i=1,2$.  We thus have a commutative diagram

\[
\begin{tikzcd}[column sep = small]
A_1(K)\otimes A_2(K) \ar{d}{\delta_1\otimes\delta_2} \ar{r}{\delta_1\otimes\delta_2} & H^1_f(K, T_1)\otimes H^1_f(K, T_2)\ar{d}{\simeq}\\
H^1(K,T_1)\otimes H^1(K,T_2) \ar{d}{\cup} & \Ext^1_{\Mod(S)_{1}^{\varphi, N}}(\mathbf{1}_1, \mathcal{M}_{\st}(T_1))\otimes \Ext^1_{\Mod(S)_{1}^{\varphi, N}}(\mathbf{1}_1, \mathcal{M}_{\st}(T_2))\ar{d}{\cup}\\
H^2(K, T_1\otimes T_2) \ar{d}{=} & \Ext^2_{\Mod(S)_{2}^{\varphi, N}}(\mathbf{1}_2, \mathcal{M}_{\st}(T_1)\otimes \mathcal{M}_{\st}(T_2))\ar{d}{\simeq}\\
H^2(K, T_1\otimes T_2) \ar{d}{=} & \Ext^2_{\Mod(S)_{2}^{\varphi, N}}(\mathbf{1}_2, \mathcal{M}_{\st}(T_1\otimes T_2))\ar{d}{T_{\st,\star}}\\
\Ext^2_{\Rep_{\ZZ_p}(G_K)}(\ZZ_p, T_1\otimes T_2)   & \ar{l}\Ext^2_{\Rep_{\ZZ_p,\cris}^{[0,2]}(G_K)}(\ZZ_p, T_1\otimes T_2), 
\end{tikzcd} 
\] 
where the map $\cup$ on $\Ext$ groups is the cup product defined in Theorem \ref{thm:cupproduct}. The bottom horizontal map is induced by the fully faithful exact inclusion of categories $\Rep_{\ZZ_p,\cris}(G_K)\hookrightarrow \Rep_{\ZZ_p}(G_K)$. 
The commutativity of the diagram follows because for a $\ZZ_p[G_K]$-representation $T$ we have isomorphisms $H^i(K, T)\simeq\Ext^i_{\Rep_{\ZZ_p}(G_K)}(\ZZ_p, T)$ and using this description of Galois cohomology the cup product $H^1(K,T_1)\otimes H^1(K,T_2)\to H^2(K, T_1\otimes T_2)$ is defined analogously to \ref{thm:cupproduct}. 
     \end{proof}

\begin{remark}\label{rem:cyclemap}

As mentioned in the introduction, we don't expect an analogue of Theorem \ref{thm:MainThmBreuil} to hold for $2$-extensions. In particular for a semistable $\ZZ_p$-lattice $T$ with Hodge-Tate weights in $[0,r]$, we expect that the map $\Ext^2_{\Mod(S)_{r,\log}^{\varphi, N}} (\mathbf{1},\mathcal{M}_{\st}(T))\to\Ext^2_{\Rep_{\ZZ_p,\st}^{[0,r]}(G_K)}(\ZZ_p, T)$ might fail to be surjective, even though we can verify it is injective. 
One reason that Corollary \ref{cor:abelianvars} is interesting is because unlike the case of $1$-extensions, $\Ext^2_{\Rep_{\ZZ_p,\cris}(G_K)}(\ZZ_p, -)$ might not be a subfunctor of $H^2(K, -)$. 
Indeed, by \cite[Corollary~A.14]{Positselski},  a $2$--extension 
\[
\begin{tikzcd}[column sep = small]
0 \ar[rr]& & A \ar[rr, "\alpha"] && X \ar[rr] && Y \ar[rr, "\beta"] && B \ar[rr]&&0\end{tikzcd}
\]
in an exact category $\mathcal{C}$ is in the trivial class of $\Ext^2_{\mathcal{C}}(B, A)$ if and only if the diagram above can be extended to a diagram
\[
\begin{tikzcd}[column sep = small]
&&&&& Z \ar[rd, "b"] &&&&&\\ 
0 \ar[rr]& & A \ar[rr, "\alpha"] && X \ar[rr]\ar[ru, "a"] && Y \ar[rr, "\beta"] && B \ar[rr]&&0\end{tikzcd}
\]
where $a$ is an admissible monomorphism and $b$ is an admissible epimorphism, such that the sequences $0\rightarrow A\stackrel{a\alpha}\rightarrow Z\stackrel{b}\rightarrow Y\rightarrow 0$ and $0\rightarrow X\stackrel{a}\rightarrow Z\stackrel{\beta b}\rightarrow B\rightarrow 0$ are both short exact in $\mathcal{C}$. The fact that crystalline representations are not closed under extensions therefore suggests existence of non-trivial crystalline $2$-extensions that become trivial as extensions of Galois representations. A somewhat distinct, but likely closely related, discussion expressed in the language of prismatic $F$-gauges appears in \cite[\S 6.7]{BhattFGauge}. 

As mentioned in the introduction, we suspect that a counterexample to injectivity might specifically arise in the case when $T_1, T_2$ are Tate modules of elliptic curves with good supersingular reduction, and we hope to use Corollary~\ref{cor:abelianvars} as a first step toward proving this.

\end{remark}

\bibliography{references}
\bibliographystyle{amsalpha}

\end{document}